 \newtheorem{The}{Theorem}[section]
 \newtheorem{Cor}[The]{Corollary}
 \newtheorem{Lem}[The]{Lemma}
 \newtheorem{Pro}[The]{Proposition}
 \theoremstyle{definition}
 \newtheorem{defn}[The]{Definition}
 \theoremstyle{remark}
 \newtheorem{Rem}[The]{Remark}
 \newtheorem*{ex}{Example}
 \numberwithin{equation}{section}
\newcommand{\T}{\mathbb{T}}
\newcommand{\R}{\mathbb{R}}
\newcommand{\Z}{\mathbb{Z}}
\newcommand{\N}{\mathbb{N}}
\title[propagation of singularities]{Propagation of singularities for weak KAM solutions and barrier functions}
\author{Piermarco Cannarsa \and Wei Cheng \and Qi Zhang}
\address{Dipartimento di Matematica, Universit\`a di Roma Tor Vergata,
Via della Ricerca Scientifica 1, 00133 Roma, Italy}
\email{cannarsa@mat.uniroma2.it}
\address{Department of Mathematics, Nanjing University,
Nanjing 210093, China}
\email{chengwei@nju.edu.cn}
\thanks{This work was partially supported by the Natural Scientific Foundation of China (Grant No. 11271182) and National Basic Research Program of China (Grant No. 2013CB834100)}
\address{Department of Mathematics, Nanjing University of Aeronautics and Astronautics, Nanjing 210016, China}
\email{zhangqi@nuaa.edu.cn}
\date{\today}
\subjclass[2010]{26B25, 35A21, 49L25, 37J50, 70H20}
\keywords{Semiconcave functions, singularities, Hamilton-Jacobi equations,  weak KAM theory.}
\begin{document}
\maketitle

\begin{abstract}
This paper studies the structure of the singular set (points of nondifferentiability) of viscosity solutions to Hamilton-Jacobi equations associated with general mechanical systems on the $n$-torus. First, using the level set method, we characterize the propagation of singularities along  generalized characteristics. Then, we obtain a local  propagation result for singularities of weak KAM solutions in the supercritical case. Finally, we  apply such a result to study the propagation of singularities for barrier functions.
\end{abstract}

\section{Introduction}

Let $\T^n$ be the $n$-dimensional flat torus, and let $H(x,p)$ be a Tonelli Hamitonian on $T^{\ast}\T^n\backsimeq\T^n\times\R^n$, that is, a continuous function  in $(x,p)$,   convex and superlinear with respect to  $p$. For any $c\in\R^n$, we denote by $\alpha(c)$  the unique real number such that the associated Hamilton-Jacobi equation
\begin{equation}\label{HJE_intro}
H\big(x,c+Du(x)\big)=\alpha(c),\quad x\in\T^n,
\end{equation}
admits a $\T^n$-periodic viscosity solution, $u_c$. The map $c\mapsto\alpha(c)$ is called Mather's $\alpha$-function (or {\em effective Hamiltonian}) in the literature.

Viscosity solutions (or weak KAM solutions) of \eqref{HJE_intro} have been widely studied for  their relevance to many different research fields such as nonlinear partial differential equations, calculus of variations,  optimal control, optimal transport, geodesic dynamics, and Hamiltonian dynamical systems (see, for instance, \cite{li82, lipava88}, \cite{ev92}, \cite{Fathi-book,Fathi-Siconolfi2004}, \cite{Cannarsa-Sinestrari}).
 As is well-known, the  solutions of \eqref{HJE_intro} turn out to be semiconcave with linear modulus and coincide with the value function of the action functional defined by the associated Tonelli Lagrangian. Semiconcavity is a very useful property for the analysis of this paper: we recall the basic notions about  semiconcave functions in section~\ref{sse:semiconcave}, referring the reader  to the monograph \cite{Cannarsa-Sinestrari} for more  details and  applications.
In particular, the superdifferential $D^+v$ of a semiconcave function $v$ is a key notion in this theory, as is the singular set of $v$, $\Sigma_v$, which is defined as the set of all points  $x$ such that $D^+v(x)$ is not a singleton.

In this paper, we will be concerned with some questions related to Mather's theory and weak KAM theory for Hamiltonian dynamical systems, especially in connection with the dynamics of singularities on a supercritical energy surface, i.e., when $\alpha(c)$ in \eqref{HJE_intro} is strictly greater than Ma\~n\'e's critical value. We will focus our attention on the class of Hamiltonians which is relevant to mechanics, that is,
\begin{equation}\label{eq:hamilton}
H(x,p)=\frac 12\langle A(x)p,p\rangle+V(x),\quad (x,p)\in\T^n\times\R^n,
\end{equation}
where $A(x)$ is a  symmetric  positive definite  $n\times n$ matrix, $\Z^n$-periodic with respect to $x\in\R^n$, and $V$ is a smooth $\Z^n$-periodic  function on $\R^n$.

Given a  weak KAM solution $u_c$ of  \eqref{HJE_intro}  (here we regard $u_c$ as a $\Z^n$-periodic function on $\R^n$), one is interested in describing the structure of the singularities of $u_c$ or, equivalently, of the function
\begin{equation*}
v(x):=u_c(x)+\langle c,x\rangle ,\quad x\in\R^n,
\end{equation*}
in a neighborhood of a singular point $x_0\in\Sigma_v$.
We recall that $\Sigma_v$ has Lebesgue measure zero and  is  countably $(n-1)$-rectifiable (\cite{Za}).

For applications to dynamical systems, the above measure theoretic results need to be completed by a suitable  analysis of the way singularities propagate. An interesting approach to such a problem---developed by various authors in several papers~(\cite{Dafermos}, \cite{Ambrosio-Cannarsa-Soner,Albano-Cannarsa,Cannarsa-Yu,acns}, \cite{Yu})---is the one based on the  differential inclusion
\begin{equation}\label{eq:char}
\dot{\mathbf{x}}(s)\in\mathrm{co}\, H_p\big(\mathbf{x}(s),D^+v(\mathbf{x}(s))\big),
\end{equation}
where ``co'' stands for the closed convex hull. The above  inclusion generalizes the classical equation of characteristics and turns out to be very useful to describe singular dynamics. Indeed, given $x_0\in\Sigma_v$, the solution of \eqref{eq:char} with initial condition  $\mathbf{x}(0)=x_0$ provides  a singular arc for $v$ under the nondegeneracy condition
\begin{equation}\label{eq:nondeg}
0\not\in D^+v(x_0)
\end{equation}
(see \cite{Albano-Cannarsa,Cannarsa-Yu} and \cite{Yu}). This condition has an interesting geometrical meaning, as it is equivalent to the fact that the level set
\begin{equation*}
\Lambda_{x_0}:=\big\{x\in\R^n~: ~v(x)=v(x_0)\big\}
\end{equation*}
is an $(n-1)$-dimensional Lipschitz submanifold near $x_0$.

On the other hand,  when \eqref{eq:nondeg} is violated, the unique of solution of \eqref{eq:char} starting from $x_0$ is the constant arc ${\mathbf{x}}(s)\equiv x_0$. So, new ideas are needed to describe the structure of $\Sigma_v$ near a  singular point at which  $D^+v$ contains the zero vector.  The main results of this paper propose possible solutions to  such a problem in the supercritical case
\begin{equation*}
\alpha(c)>\max_{\T^n} V
\end{equation*}
that was partially addressed in \cite{Cannarsa-Yu}.

First, we  show that condition \eqref{eq:nondeg}  holds  if and only if $x_0$ is not a local maximum point of $v$ and the level set $\Lambda_{x_0}$ is an $(n-1)$-dimensional Lipschitz submanifold near $x_0$ (Theorem \ref{equiv}). In any case, that is, even when  $0\in D^+v(x_0)$, we  prove that  the singularities of $v$ propagate along Lipschitz arcs (Theorem \ref{th:wKAM}),
solving an open problem raised in \cite{Cannarsa-Yu}. For this result, we use the  propagation  principle for general semiconcave functions due to Albano and the first author~\cite{Albano-Cannarsa} (see also \cite{Cannarsa-Yu}).

Finally, we turn to study the singularities of the so-called barrier function $B^{\ast}_c$ (see Mather~\cite{Mather93}), concluding that  propagation also occurs in this case (Theorem~\ref{pro_barrier}). Moreover,  given a pair of weak KAM solutions $(u^-_c,u^+_c)$ such that $B^{\ast}_c=u^-_c-u^+_c$, we show that any $x\in\Sigma_{B^{\ast}_c}$ produces a homoclinic orbit with respect to the Aubry set provided that $u^-_c$ and $u^+_c$ possess a common reachable gradient
(Theorem \ref{homoclinic_orbit}). In a forthcoming paper,  building on this analysis, we will address  further regularity issues aiming at solving  central problems in Hamiltonian dynamics such as the problem of Arnold diffusions.

The paper is organized as follows. In Section~2, we review  preliminary material on  viscosity solutions of  Hamilton-Jacobi equations and  semiconcavity. In Section~3, we  characterize the regularity condition $0\not\in D^+v(x)$, and discuss the (local and global) propagation of singularities along  generalized characteristics. In section~4,  the local propagation of singularities is derived for the barrier function and applied to the study of homoclinic orbits.

\section{preliminaries}
Let $\T^n$ be the $n$-dimensional flat torus. We denote by $T\T^n$ the tangent bundle of $\T^n$ and by $T^{\ast}\T^n$ the cotangent bundle.

\begin{defn}
A function $L:T\T^n\to\R$ is said to be a {\em Tonelli Lagrangian} if the following assumptions are satisfied.
\begin{enumerate}[(L1)]
  \item {\em Smoothness}: $L=L(x,q)$ is of class at least $C^2$.
  \item {\em Convexity}: The Hessian $\frac{\partial^2 L}{\partial q^2}(x,q)$ is positive definite on each fibre $T_x\T^n$.
  \item {\em Superlinearity}:
$$
\lim_{|q|\to\infty}\frac{L(x,q)}{|q|}=\infty\quad\text{uniformly for } x\in\T^n.
$$
\end{enumerate}

\end{defn}

Given a Tonelli Lagrangian  $L$,  the {\em Tonelli Hamiltonian} $H=H(x,p)$ associated with  $L$ is defined as follows:
$$
H(x,p)=\max\big\{\langle p,q\rangle-L(x,q): q\in T_x\T^n\big\},\quad (x,p)\in T^{\ast}\T^n\,.
$$
It is easy to see that for any Tonelli Lagrangian $L$, the associated Hamiltonian $H$ satisfies  similar smoothness ($H$ is of class at least $C^2$), convexity, and superlinearity conditions, which will be referred to as (H1), (H2), and (H3).

\subsection{Semiconcave functions}
\label{sse:semiconcave}
Let $\Omega\subset\R^n$ be a convex open set. We recall that  a function $u:\Omega\to\R$ is {\em semiconcave} (with linear modulus) if there exists a constant $C>0$ such that
\begin{equation}\label{eq:SCC}
\lambda u(x)+(1-\lambda)u(y)-u(\lambda x+(1-\lambda)y)\leqslant\frac C2\lambda(1-\lambda)|x-y|^2
\end{equation}
for any $x,y\in\Omega$ and $\lambda\in[0,1]$.  Any constant $C$ that satisfies the above inequality  is called a {\em semiconcavity constant} for $u$ in $\Omega$. Property \eqref{eq:SCC} has many equivalent versions, one of which is to require that
\begin{equation}\label{eq:SCC2}
x\mapsto u(x)-\frac C2|x|^2\;\;\text{is concave in}\;\;\Omega\,.
\end{equation}

 Let $u:\Omega\subset\R^n\to\R$ be a continuous function. We recall that, for any $x\in\Omega$, the closed convex sets
\begin{align*}
D^-u(x)&=\left\{p\in\R^n:\liminf_{y\to x}\frac{u(y)-u(x)-\langle p,y-x\rangle}{|y-x|}\geqslant 0\right\},\\
D^+u(x)&=\left\{p\in\R^n:\limsup_{y\to x}\frac{u(y)-u(x)-\langle p,y-x\rangle}{|y-x|}\leqslant 0\right\}.
\end{align*}
are called the {\em (Dini) subdifferential} and {\em superdifferential} of $u$ at $x$, respectively.
For semiconcave functions, the superdifferential plays a major role as is shown by
the following property (see, e.g., \cite{Cannarsa-Sinestrari} for the proof).
\begin{Pro}
\label{criterion-Du_semiconcave}
Let $u:\Omega\to\R$ be a continuous function. If there exists a constant $C>0$ such that, for any $x\in\Omega$, there exists $p\in\R^n$ such that
\begin{equation}\label{criterion_for_lin_semiconcave}
u(y)\leqslant u(x)+\langle p,y-x\rangle+\frac C2|y-x|^2,\quad \forall y\in\Omega,
\end{equation}
then $u$ is semiconcave with constant $C$ and $p\in D^+u(x)$.
Conversely,
if $u$ is semiconcave  in $\Omega$ with constant $C$, then \eqref{criterion_for_lin_semiconcave} holds for any $x\in\Omega$ and $p\in D^+u(x)$.
\end{Pro}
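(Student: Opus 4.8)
The plan is to prove the two implications separately, working directly from the definition \eqref{eq:SCC} of semiconcavity and the definition of $D^+u$, and to note in passing the shortcut provided by the equivalent formulation \eqref{eq:SCC2}.

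For the first implication, assume the pointwise quadratic majorant \eqref{criterion_for_lin_semiconcave} holds. The claim $p\in D^+u(x)$ is immediate: dividing \eqref{criterion_for_lin_semiconcave} by $|y-x|$ gives $\frac{u(y)-u(x)-\langle p,y-x\rangle}{|y-x|}\leqslant\frac C2|y-x|$, whose $\limsup$ as $y\to x$ is $\leqslant 0$. For semiconcavity, fix $x,y\in\Omega$ and $\lambda\in[0,1]$, set $z=\lambda x+(1-\lambda)y$ (which lies in $\Omega$ by convexity), and let $p$ be a vector for which \eqref{criterion_for_lin_semiconcave} holds at $z$. Applying that inequality at the two points $x$ and $y$, then forming the combination $\lambda(\cdot)+(1-\lambda)(\cdot)$, one uses the identities $x-z=(1-\lambda)(x-y)$ and $y-z=-\lambda(x-y)$: the linear terms contribute $\lambda(1-\lambda)\langle p,x-y\rangle-(1-\lambda)\lambda\langle p,x-y\rangle=0$, while the quadratic terms contribute $\frac C2\bigl[\lambda(1-\lambda)^2+(1-\lambda)\lambda^2\bigr]|x-y|^2=\frac C2\lambda(1-\lambda)|x-y|^2$. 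This is exactly \eqref{eq:SCC}. (Equivalently, \eqref{criterion_for_lin_semiconcave} says that $u(\cdot)-\frac C2|\cdot|^2$ admits a global affine support at every point of the convex set $\Omega$, hence is concave, which is \eqref{eq:SCC2}.)

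For the converse, suppose $u$ is semiconcave with constant $C$ in $\Omega$, fix $x\in\Omega$ and $p\in D^+u(x)$, and let $y\in\Omega$ be arbitrary. For $\lambda\in(0,1)$ put $x_\lambda=(1-\lambda)x+\lambda y$. By \eqref{eq:SCC} applied to $x$, $y$ with weight $\lambda$ on $y$, we get $u(x_\lambda)\geqslant(1-\lambda)u(x)+\lambda u(y)-\frac C2\lambda(1-\lambda)|x-y|^2$, i.e. $u(x_\lambda)-u(x)\geqslant\lambda\bigl(u(y)-u(x)\bigr)-\frac C2\lambda(1-\lambda)|x-y|^2$. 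Since $|x_\lambda-x|=\lambda|y-x|$ and $\langle p,x_\lambda-x\rangle=\lambda\langle p,y-x\rangle$, dividing by $\lambda|y-x|$ and rearranging yields
\[
\frac{u(x_\lambda)-u(x)-\langle p,x_\lambda-x\rangle}{|x_\lambda-x|}\;\geqslant\;\frac{u(y)-u(x)-\langle p,y-x\rangle}{|y-x|}-\frac C2(1-\lambda)|x-y|.
\]
Letting $\lambda\to 0^+$, the left-hand side has $\limsup\leqslant 0$ because $x_\lambda\to x$ and $p\in D^+u(x)$; hence the right-hand side forces $\frac{u(y)-u(x)-\langle p,y-x\rangle}{|y-x|}\leqslant\frac C2|x-y|$, which is \eqref{criterion_for_lin_semiconcave}.

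There is no substantial obstacle here; the argument is elementary. The only points requiring care are that the convexity of $\Omega$ is needed so that the segments $[x,y]$ and the midpoints $z$, $x_\lambda$ stay in the domain, and that in the converse direction one must pass from the two-sided condition defining $D^+u(x)$ to its consequence along a single ray $x_\lambda\to x$, which is legitimate since the $\limsup$ over all $y\to x$ dominates the directional one.
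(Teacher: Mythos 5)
Your proof is correct, and the argument is the standard one. The paper itself gives no proof of this proposition, simply citing the monograph \cite{Cannarsa-Sinestrari}; your computation (supporting inequality at the midpoint $z$ with the vector identities $x-z=(1-\lambda)(x-y)$, $y-z=-\lambda(x-y)$ for the forward direction, and the ray $x_\lambda=(1-\lambda)x+\lambda y$ with $\lambda\to0^+$ for the converse) is precisely the classical derivation found there.
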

Let $u:\Omega\to\R$ be locally Lipschitz. We recall that a vector $p\in\R^n$ is called a {\em reachable} (or {\em limiting}) {\em gradient}  of $u$ at $x$ if there exists a sequence $\{x_n\}\subset\Omega\setminus\{x\}$ such that $u$ is differentiable at $x_k$ for each $k\in\N$, and
$$
\lim_{k\to\infty}x_k=x\quad\text{and}\quad \lim_{k\to\infty}Du(x_k)=p.
$$
The set of all reachable gradients of $u$ at $x$ is denoted by $D^{\ast}u(x)$.

Now we list some   well known properties of the superdifferential  of a semiconcave function on $\Omega\subset\R^n$ (see, e.g., \cite{Cannarsa-Sinestrari} for the proof).

\begin{Pro}\label{basic_facts_of_superdifferential}
Let $u:\Omega\subset\R^n\to\R$ be a semiconcave function and let $x\in\Omega$. Then the following properties hold.
\begin{enumerate}[\rm {(}a{)}]
  \item $D^+u(x)$ is a nonempty compact convex set in $\R^n$ and $D^{\ast}u(x)\subset\partial D^+u(x)$, where  $\partial D^+u(x)$ denotes the topological boundary of $D^+u(x)$.
  \item The set-valued function $x\rightsquigarrow D^+u(x)$ is upper semicontinuous.
  \item If $D^+u(x)$ is a singleton, then $u$ is differentiable at $x$. Moreover, if $D^+u(x)$ is a singleton for every point in $\Omega$, then $u\in C^1(\Omega)$.
  \item $D^+u(x)=\mathrm{co}\, D^{\ast}u(x)$.
  \item $D^{\ast}u(x)=\big\{\lim_{i\to\infty}p_i~:~ p_i\in D^+u(x_i),\; x_i\to x,\;\mathrm{diam}\,(D^+u(x_i))\to 0\big\}$.
\end{enumerate}
\end{Pro}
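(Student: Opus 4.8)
The plan is to reduce every item to the convex-analytic content hidden in semiconcavity, via the equivalent formulation \eqref{eq:SCC2}. Fix a semiconcavity constant $C$ for $u$ and set $g(z):=u(z)-\tfrac C2|z|^2$, which is concave on $\Omega$. Directly from the definitions, $p\in D^+u(x)$ iff $g(y)\le g(x)+\langle p-Cx,y-x\rangle$ for all $y\in\Omega$, i.e. $D^+u(x)=\partial g(x)+Cx$, where $\partial g(x)$ is the convex-analytic superdifferential of the concave function $g$. Since $g$, hence $u$, is locally Lipschitz on the open convex set $\Omega$, and a concave function has a supporting hyperplane at each interior point, $\partial g(x)$ is nonempty, convex, closed, and contained in the ball of radius the local Lipschitz constant of $u$ near $x$; translating, $D^+u(x)$ is a nonempty compact convex set, which is the first half of (a).

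Item (b) I would prove directly from Proposition \ref{criterion-Du_semiconcave}: if $x_k\to x$, $p_k\in D^+u(x_k)$, $p_k\to p$, then $u(y)\le u(x_k)+\langle p_k,y-x_k\rangle+\tfrac C2|y-x_k|^2$ for all $y$; passing to the limit (using continuity of $u$) gives $u(y)\le u(x)+\langle p,y-x\rangle+\tfrac C2|y-x|^2$, hence $\limsup_{y\to x}\big(u(y)-u(x)-\langle p,y-x\rangle\big)/|y-x|\le0$, i.e. $p\in D^+u(x)$. For (c): if $D^+u(x)=\{p\}$, the last inequality with $x_k\equiv x$ is an upper estimate for $u$ near $x$; for a matching lower estimate, pick for $y$ near $x$ some $q_y\in D^+u(y)$ (bounded near $x$, and $q_y\to p$ as $y\to x$ by (b)) and use semiconcavity centred at $y$ to get $u(y)-u(x)\ge\langle q_y,y-x\rangle-\tfrac C2|y-x|^2=\langle p,y-x\rangle+o(|y-x|)$; so $u$ is differentiable at $x$ with $Du(x)=p$, and if this holds everywhere then $x\mapsto\{Du(x)\}$ is single-valued and, by (b), upper semicontinuous, so $Du$ is continuous and $u\in C^1(\Omega)$.

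The reachable-gradient statements form the remaining block. First, $D^{\ast}u(x)\subset D^+u(x)$ by (b). To get $D^{\ast}u(x)\subset\partial D^+u(x)$, pass to $g$: if $p=\lim_kDu(x_k)$ with $x_k\to x$, then $\xi_k:=Du(x_k)-Cx_k\in\partial g(x_k)$ and $\xi_k\to\xi:=p-Cx\in\partial g(x)$; were $\xi$ interior to $\partial g(x)$, then $\xi+t\,e_k\in\partial g(x)$ for small $t>0$ and $e_k:=-(x_k-x)/|x_k-x|$, and the monotonicity inequality $\langle\xi_k-(\xi+te_k),x_k-x\rangle\le0$ for superdifferentials of concave functions, divided by $|x_k-x|$, would give $\langle\xi_k-\xi,(x_k-x)/|x_k-x|\rangle\le-t$, contradicting $\xi_k\to\xi$; so $\xi\in\partial(\partial g(x))$ and $p\in\partial D^+u(x)$, completing (a). For (d), $\mathrm{co}\,D^{\ast}u(x)\subseteq D^+u(x)$ is clear; the reverse inclusion I would obtain by showing that every exposed point $p$ of $D^+u(x)$ lies in $D^{\ast}u(x)$. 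Let $e$ be a unit vector exposing $p$, so $M:=\max_{q\in D^+u(x)}\langle q,e\rangle=\langle p,e\rangle$ is attained only at $p$. Since $\Sigma_u$ is Lebesgue-null, by Fubini almost every ray from $x$ with direction in a small spherical cap around $-e$ meets $\Sigma_u$ in a one-dimensional null set; along such rays, for directions $e_j\to e$, choose differentiability points $z_j=x-t_je_j$ with $t_j\downarrow0$, so $z_j\to x$. Because $u$ is semiconcave, the one-variable function $t\mapsto u(x-te_j)$ has non-increasing derivative up to a linear correction, and its right derivative at $0$ equals $u'(x;-e_j)=-\max_{q\in D^+u(x)}\langle q,e_j\rangle$ (the directional derivative of a semiconcave function exists and equals $\min_{q\in D^+u(x)}\langle q,v\rangle$ for every $v$); hence $\langle Du(z_j),e_j\rangle\ge\max_{q\in D^+u(x)}\langle q,e_j\rangle-Ct_j\to M$, while upper semicontinuity gives $\limsup_j\langle Du(z_j),e\rangle\le M$; since $e_j\to e$ and the $Du(z_j)$ are bounded, $\langle Du(z_j),e\rangle\to M$. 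Then every subsequential limit of $Du(z_j)$ is in $D^{\ast}u(x)\subset D^+u(x)$ and attains $M$ in the direction $e$, hence equals $p$, so $Du(z_j)\to p$ and $p\in D^{\ast}u(x)$. As $D^{\ast}u(x)$ is closed (a diagonal argument), it contains the closure of the exposed points, hence all extreme points of $D^+u(x)$ by Straszewicz's theorem, hence $\mathrm{co}\,D^{\ast}u(x)\supseteq\mathrm{co}(\mathrm{ext}\,D^+u(x))=D^+u(x)$ by Minkowski's theorem; this proves (d). Finally (e): "$\subseteq$" holds since $D^+u(x_k)=\{Du(x_k)\}$ has zero diameter at a differentiability point; for "$\supseteq$", given $p_i\in D^+u(x_i)$ with $x_i\to x$, $p_i\to p$, $\mathrm{diam}\,D^+u(x_i)\to0$, pick (Rademacher) a differentiability point $z_i$ so close to $x_i$ that upper semicontinuity forces $D^+u(z_i)$ into an $o(1)$-neighbourhood of $D^+u(x_i)$; then $|Du(z_i)-p_i|\le o(1)+\mathrm{diam}\,D^+u(x_i)\to0$, so $z_i\to x$, $Du(z_i)\to p$, i.e. $p\in D^{\ast}u(x)$.

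The main obstacle is the inclusion $D^+u(x)\subseteq\mathrm{co}\,D^{\ast}u(x)$ in (d): items (a)--(c) and (e) are soft consequences of the supporting-hyperplane property, of upper semicontinuity, and of the quadratic inequality of Proposition \ref{criterion-Du_semiconcave}, whereas producing genuine points of differentiability accumulating at $x$ with gradients converging to a prescribed exposed point requires the measure-theoretic input (Rademacher, and Fubini along a cone of rays) combined with the convex-geometric facts of Straszewicz and Minkowski, and is the only genuinely delicate step.
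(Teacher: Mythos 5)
The paper does not prove this proposition: it is stated as a collection of known facts with a pointer to the monograph \cite{Cannarsa-Sinestrari}, so there is no in-paper argument to compare against. Your proof is correct and follows the standard route taken in that reference. The reduction to the concave shift $g(z)=u(z)-\tfrac C2|z|^2$, so that $D^+u(x)=\partial g(x)+Cx$, gives (a) from the supporting-hyperplane property and the local Lipschitz bound; passing to the limit in the quadratic inequality of Proposition~\ref{criterion-Du_semiconcave} gives (b) and, combined with the matching lower estimate from a nearby supergradient, gives (c); your monotonicity argument for concave supergradients correctly yields $D^{\ast}u(x)\subset\partial D^+u(x)$; for (d), the chain ``directional-derivative formula $u'(x;v)=\min_{q\in D^+u(x)}\langle q,v\rangle$ $\Rightarrow$ polar-coordinate Fubini selection of differentiability points along rays near the exposing direction $\Rightarrow$ exposed points are reachable $\Rightarrow$ Straszewicz and Minkowski'' is exactly the delicate part, and your computation of $\langle Du(z_j),e_j\rangle\geqslant\max_q\langle q,e_j\rangle-Ct_j$ together with upper semicontinuity does establish $Du(z_j)\to p$; and (e) follows by upper semicontinuity plus Rademacher as you write. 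Your closing assessment that the inclusion $D^+u(x)\subseteq\mathrm{co}\,D^{\ast}u(x)$ is the only genuinely hard step is accurate, and I see no gaps.
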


%
%

A point $x\in\Omega$ is called a {\em singular point} of $u$ if $D^+u(x)$ is not a singleton. The set of all singular points of $u$, also called the {\em singular set} of $u$, is denoted by $\Sigma_u$.

\subsection{Hamilton-Jacobi equations and viscosity solutions}
Throughout this paper we will be concerned with the Hamilton-Jacobi equation
\begin{equation}\label{mech_sys_general}
H\big(x,c+Du(x)\big)=\alpha(c)\qquad (x\in\T^n)
\end{equation}
with
\begin{equation}\label{eq:Hamilton}
H\big(x,p\big):=\frac 12\big\langle A(x)p,p\big\rangle+V(x)\qquad (x,p)\in\T^n\times \R^n\,,
\end{equation}
where
\begin{itemize}
\item  $\langle A(\cdot)\cdot,\cdot\rangle$ is a smooth $\Z^n$-periodic Riemannian metric on $\R^n$ (that is,
$A(x)$ is a $\Z^n$-periodic symmetric positive definite $n\times n$ matrix with smooth entries),
\item $V$ is a smooth $\Z^n$-periodic  function on $\R^n$,
\item $c\in\R^n$, and
\item $\alpha:\R^n\to\R$ is Mather's $\alpha$-function (that is, for any $c\in\R^n$, $\alpha(c)$ is the unique constant such that \eqref{mech_sys_general} admits a  $\Z^n$-periodic viscosity solution).
\end{itemize}
We recall that a continuous function $u$ is called a {\em viscosity subsolution} of equation
\eqref{mech_sys_general} if, for any $x\in\T^n$,
\begin{align}\label{viscosity subsolution}
H(x,c+p)\leqslant\alpha(c),\quad\forall p\in D^+u(x)\,.
\end{align}
Similarly, $u$ is a {\em viscosity supersolution} of equation \eqref{mech_sys_general} if, for any $x\in\T^n$,
\begin{align}\label{viscosity supersolution}
H(x,c+p)\geqslant\alpha(c),\quad\forall p\in D^-u(x)\,.
\end{align}
Finally, $u$ is called a {\em viscosity solution} of equation \eqref{mech_sys_general}, if it is both a viscosity subsolution and a supersolution.

\begin{Rem}
In the context of weak KAM theory, the configuration space $M$ is a smooth closed manifold in general. In this case, we can use the standard definition of  viscosity solutions of the Hamilton-Jacobi equation \eqref{mech_sys_general} given in terms of  test functions, i.e., $u$ is called a viscosity subsolution (resp. supersolution) of \eqref{mech_sys_general} if
$$
H(x,c+D\phi(x))\leqslant\alpha(c),\quad(\text{resp.}\ H(x,c+D\phi(x))\geqslant\alpha(c)),
$$
for any $\phi\in C^1(\Omega)$ such that $u-\phi$ attains a local maximum (resp. minimum) at $x$, and a viscosity solution of \eqref{mech_sys_general} if it is a viscosity subsolution and a viscosity supersolution simultaneously. However, for the purposes of this paper, we only consider the case of $M=\T^n$. Thus, we also omit the general treatment of the notion of local semiconcavity on manifolds (see, for instance, \cite{Rifford}).
\end{Rem}

\subsection{Facts from nonsmooth analysis}\label{se:NSA}
We now need to recall some basic fact from nonsmooth analysis. The interested reader can find more details on this topic  in \cite{Clarke}, as well as the proof of the results we are going to recall.

Let $S\subset\R^n$ be a nonempty closed set and let $x\in S$.
\begin{defn}
A vector $\theta\in\R^n$ belongs to the {\em contingent cone} (or {\em Bouligand's tangent cone}) $T_S(x)$ iff there exist  sequences $\theta_i\in \R^n$ converging to $\theta$ and  $ t_i\in\R^+$ decreasing to $0$ such that
$$
x+t_i\theta_i\in S\,,\quad \forall i\geqslant 1\,.
$$
A vector $\theta\in\R^n$ belongs to {\em Clarke's tangent cone}   $C_S(x)$ iff, for all sequences $x_i\in S$ converging to $x$ and  $ t_i\in\R^+$ decreasing to $0$, there is a sequence  $\theta_i\in \R^n$
$$
x_i+t_i\theta_i\in S\,,\quad \forall i\geqslant 1\,.
$$
The vector space generated by $T_S(x)$ is called the {\em tangent space} to $S$ at $x$ and is denoted by $\text{Tan}(x,S)$.
\end{defn}
Note that $C_S(x)$ is always contained in $T_S(x)$.
\begin{defn}
$S$ is said to be {\em epi-Lipschitzian} near $x$ if there is an invertible linear transformation $A:\R^n\to \R^{n-1}\times\R$ such that, for some neighborhood $U$ of $x$, one has
\begin{equation*}
S\cap U=U\cap A^{-1}(\text{epi}\,f)\,,
\end{equation*}
where $f:\R^{n-1}\to\R$ is Lipschitz function and 
\begin{equation*}
\text{epi}\,f=\big\{(y,z)\in\R^{n-1}\times\R~:~f(y)\leqslant z\big\}\,.
\end{equation*}
\end{defn}
When the above holds, the part of the boundary of $S$ in $U$ is represented by the graph of a Lipschitz function and can be regarded as an $(n-1)$-dimensional Lipschitz submanifold  of $\R^n$. One can show that a closed set $S$ is epi-Lipschitzian near $x$ iff  $C_S(x)$ has nonempty interior (see \cite{Rockafellar}).
\begin{defn}
Let $F:\Omega\subset\R^n\to\R^m$ be a vector-valued function, written in terms of component functions as $F(x)=(f^1(x),\ldots,f^m(x))$, where each $f^i:\Omega\to\R$, $i=1,\ldots,m$, is locally Lipschitz near $x$. The {\em generalized Jacobian} of $F$ at $x$, denoted by $\partial F(x)$, is the compact set
$$
\partial F(x)=\text{co}\{\lim_{x_i\to x}JF(x_i): F\ \text{is differentiable at}\ x_i \}.
$$
$\partial F(x)$ is said to be of {\em maximal rank} if every matrix in $\partial F(x)$ is of maximal rank.
\end{defn}

We conclude with the implicit function theorem for Lipschitz mappings. Let $\Omega\subset \R^m\times\R^k$ be an open set, let
$
F:\Omega\to\R^k
$
and let $(y_0,z_0)\in \Omega $. The notation $\pi_z\partial F(y_0,z_0)$ stands for the set of all $k\times k$ matrices $M$ such that, for some $k\times m$ matrix $N$, the $k\times(k+m)$ matrix $(N,M)$ belongs to $\partial F(y_0,z_0)$.

\begin{Pro}\label{IFT}
Let $F: \Omega\subset\R^m\times\R^k\to\R^k$ be locally Lipschitz, let $(y_0,z_0)\in \Omega $, and
 suppose that $\pi_z\partial F(y_0,z_0)$ only contains matrices of maximal rank. Then there exists a neighborhood $U$ of $y_0$ and a Lipschitz function $z:U\to\R^k$ such that $z(y_0)=z_0$ and
$$
F(y,z(y))=0,\quad \forall\,y\in U.
$$
\end{Pro}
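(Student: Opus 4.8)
We may assume $F(y_0,z_0)=0$, since this is forced by the conclusion ($z(y_0)=z_0$ together with $F(y,z(y))\equiv0$). The plan is to produce $z(\cdot)$ directly by a minimization argument that isolates the exact role of the hypothesis on $\pi_z\partial F$; a shorter route via Clarke's inverse function theorem is indicated at the end. Let $L$ be a Lipschitz constant for $F$ on some ball about $(y_0,z_0)$. First I would recall two standard facts from nonsmooth analysis (see \cite{Clarke}): (i) the \emph{partial} generalized Jacobian obeys $\partial_z F(y,z)\subseteq\pi_z\partial F(y,z)$, where $\partial_z F(y,z)$ is the generalized Jacobian of $z\mapsto F(y,z)$ with $y$ held fixed; (ii) the nonsmooth mean value inequality: for a locally Lipschitz $G$ and any $z,z'$ there is a matrix $N\in\overline{\mathrm{co}}\,\bigcup_{t\in[0,1]}\partial G\bigl(z'+t(z-z')\bigr)$ with $G(z)-G(z')=N(z-z')$.

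The key step is a uniform invertibility estimate near $(y_0,z_0)$. Since $\partial F(y_0,z_0)$ is compact and convex, so is its projection $\pi_z\partial F(y_0,z_0)$, and by hypothesis every matrix in it is invertible; hence the continuous map $M\mapsto\min_{|v|=1}|Mv|$ is bounded below on it by $2\varepsilon$ for some $\varepsilon>0$. By upper semicontinuity of $(y,z)\rightsquigarrow\partial F(y,z)$ there are $\rho,r>0$ with $\pi_z\partial F(y,z)\subseteq\pi_z\partial F(y_0,z_0)+\bar B_\varepsilon$ whenever $|y-y_0|<\rho$ and $|z-z_0|\leqslant r$, where $\bar B_\varepsilon$ is the closed $\varepsilon$-ball in the space of $k\times k$ matrices. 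Because $\pi_z\partial F(y_0,z_0)$ is \emph{already convex}, the convex hull of the union of all these sets over the region stays inside $\pi_z\partial F(y_0,z_0)+\bar B_\varepsilon$, which consists only of invertible matrices $N$ with $\|N^{-1}\|\leqslant1/\varepsilon$. Feeding this into (i) and into (ii) for the map $z\mapsto F(y,z)$ on the convex ball $\bar B_r(z_0)$, we obtain, for $|y-y_0|<\rho$ and $z,z'\in\bar B_r(z_0)$, a matrix $N$ of the above kind with $F(y,z)-F(y,z')=N(z-z')$, so $|z-z'|\leqslant\varepsilon^{-1}|F(y,z)-F(y,z')|$. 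In particular $z\mapsto F(y,z)$ is injective on $\bar B_r(z_0)$ for each such $y$.

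I would then obtain $z(y)$ by minimization. Fix $y$ with $|y-y_0|<\min\{\rho,\varepsilon r/(2L)\}$ and let $z(y)$ minimize $h_y(z):=|F(y,z)|^2$ over the compact ball $\bar B_r(z_0)$. Since $F(y_0,z_0)=0$ we have $h_y(z_0)\leqslant L^2|y-y_0|^2<(\varepsilon r/2)^2$, whereas for $|z-z_0|=r$ the estimate above gives $|F(y,z)|\geqslant\varepsilon r-|F(y,z_0)|\geqslant\varepsilon r-L|y-y_0|>\varepsilon r/2$; thus the minimizer lies in the open ball $B_r(z_0)$. As an interior local minimum it satisfies $0\in\partial h_y(z(y))$, and by Clarke's chain rule $\partial h_y(z(y))\subseteq\{2N^{\top}F(y,z(y)):N\in\pi_z\partial F(y,z(y))\}$; since every such $N$ is invertible, $F(y,z(y))=0$. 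Uniqueness of $z(y)$ in $B_r(z_0)$ and the equality $z(y_0)=z_0$ follow from the injectivity just proved. Finally $z$ is Lipschitz near $y_0$: from $0=F(y,z(y))-F(y',z(y'))=\bigl[F(y,z(y))-F(y',z(y))\bigr]+\bigl[F(y',z(y))-F(y',z(y'))\bigr]$, bounding the first bracket by $L|y-y'|$ and the second from below by $\varepsilon|z(y)-z(y')|$ gives $|z(y)-z(y')|\leqslant(L/\varepsilon)|y-y'|$, so $U:=\{y:|y-y_0|<\min\{\rho,\varepsilon r/(2L)\}\}$ works.

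The main obstacle is the uniform invertibility estimate---more precisely, the passage from ``every matrix in $\pi_z\partial F(y_0,z_0)$ is invertible'' to ``every matrix in the convex hull of all partial Jacobians at nearby points is invertible, with a uniform inverse bound.'' This is exactly where convexity of $\partial F(y_0,z_0)$ (hence of $\pi_z\partial F(y_0,z_0)$) is indispensable, so that forming convex hulls after an $\varepsilon$-enlargement does not escape a fixed neighborhood of the original compact set; and one must keep in mind that the mean value inequality only places $F(y,z)-F(y,z')$ in a convex hull of matrix actions rather than in the action of a single Jacobian, which is precisely why a naive ``each nearby Jacobian is invertible'' is not enough. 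Everything else (the minimization, the first-order condition via Clarke's calculus, the Lipschitz bound) is then routine. Alternatively, one may apply Clarke's inverse function theorem to the augmented locally Lipschitz map $G(y,z):=(y,F(y,z))$ on $\R^m\times\R^k$: its generalized Jacobian at $(y_0,z_0)$ is $\bigl\{\bigl(\begin{smallmatrix}I_m&0\\ N&M\end{smallmatrix}\bigr):(N,M)\in\partial F(y_0,z_0)\bigr\}$ (the top block row being constant), with determinant $\det M$, hence of maximal rank exactly under the stated hypothesis; the desired $z(\cdot)$ is the second component of the (necessarily first-coordinate-preserving) local Lipschitz inverse of $G$, evaluated at $w=0$.
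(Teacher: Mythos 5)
The paper does not actually prove Proposition~\ref{IFT}: it is stated as a cited result, with the remark at the start of Section~2.3 that the reader can find the proof in Clarke's book \cite{Clarke}. So there is no in-paper argument to compare against, and your task was to supply one. Your main (minimization) route is correct, and the central technical point you flag is indeed the crux: from ``every matrix in the compact convex set $\pi_z\partial F(y_0,z_0)$ is invertible'' one gets a uniform bound $\min_{|v|=1}|Mv|\geqslant 2\varepsilon$, which survives an $\varepsilon$-enlargement by upper semicontinuity of $\partial F$ and survives taking convex hulls because the enlarged set is still convex; this is precisely what the mean-value containment needs, since it only places the increment $F(y,z)-F(y,z')$ in the action of the convex hull of nearby partial Jacobians, not of a single one. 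The penalty/minimization step, the interior-minimizer argument via Clarke's chain rule for $C^1\circ\mathrm{Lipschitz}$, and the final Lipschitz estimate for $z(\cdot)$ are all sound, using throughout the standard inclusion $\partial_z F\subseteq\pi_z\partial F$. The ``alternative'' you sketch at the end --- applying Clarke's inverse function theorem to $G(y,z)=(y,F(y,z))$ and reading off the second component of the local inverse at $w=0$ --- is exactly the derivation Clarke gives; the block-triangular form of $\partial G(y_0,z_0)$ shows that its maximal-rank hypothesis reduces to the stated one on $\pi_z\partial F(y_0,z_0)$. So your proposal is a valid self-contained proof, and your closing remark identifies the route the paper implicitly relies on.
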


\section{propagation of singularities for weak KAM solutions}
Let $u_c(x)$ be a weak KAM solution of equation \eqref{mech_sys_general}, with Hamiltonian given by \eqref{eq:Hamilton}. One can lift the problem to the universal covering space $\R^n$ defining
\begin{equation}\label{v}
v(x):=u_c(x)+\langle c,x\rangle, \qquad x\in \R^n.
\end{equation}
Then, $v$ is also a locally semiconcave function. Throughout this paper we will assume the energy condition
\begin{equation}\label{energy_condition}
\alpha(c)>\max_{x\in\T^n} V(x),
\end{equation}
where the latter is the so-called Ma\~n\'e's critical value.

\subsection{Local maxima of weak KAM solutions}
Let $v$ be defined by \eqref{v}. Then, the singular sets of $v$ and $u_c$ coincide. For all $x\in \R^n$, define the level set
$$
\Lambda_x:=\{y\in\R^n: v(y)=v(x)\},
$$
and superlevel set
$$
\Lambda^+_x:=\{y\in\R^n: v(y)\geqslant v(x)\}.
$$
Let $x\in\Sigma_v$. We want to characterize the {\sl regularity condition}
$$
0\not\in D^+v(x)
$$
in terms of the smoothness of the level set $\Lambda_x$. As is well-known, the above condition is  crucial for the propagation of singularities along generalized characteristics.


\begin{ex}
Let us consider the mechanical system $L(x,q)=\frac 12|q|^2-V(x)$, where $x\in\T^n$ and $V\leqslant0$ is such that $\max_{x\in\T^n}V(x)=0$ and $\min_{x\in\T^n}V(x)<0$. Let $\mu$ be a $c$-minimal invariant probability measure on $T\T^n$ (see, e.g. \cite{Mather91},\cite{Mather93}), that is,
$$
-\alpha(c)=\int_{T\T^n}\Big(\frac 12|q|^2-\langle c,q\rangle-V(x)\Big) d\mu\,.
$$
If $\kappa=\int_{T\T^n} V(x)d\mu_0>\min_{x\in\T^n}V(x)$, where $\mu_0$ is an invariant probability measure  generated by the linear flow $\dot{x}(t)=c$, then
$$
\frac12|c|^2+\kappa\leqslant\alpha(c)\leqslant\frac12|c|^2.
$$
Therefore,
$$
|c|\leqslant\sqrt{2(\alpha(c)-\kappa)}\,.
$$
On the other hand, if $v$ is defined as in \eqref{v}, easy calculations show that, for any $x\in\text{arg}\min V$,
$$
|p|=\sqrt{2(\alpha(c)-V(x))}\,,\quad p\in D^{\ast}v(x)\,.
$$
This implies the Lipschitz constant of $v$ near $x$ is not less than $|c|$. It is clear that there could exist no local maximum point of $v$, if $|c|$ were larger than the Lipschitz constant of $v$.
\end{ex}

\begin{Pro}
Let $v$, defined  in \eqref{v}, satisfy the energy condition \eqref{energy_condition}. Then $v$ has no classical critical point.
Consequently, if $x$ is a local maximum point of $v$, then $x\in\Sigma_v$. Moreover, $v$ has no local minima.
\end{Pro}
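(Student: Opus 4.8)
The plan is to read off all three assertions from the viscosity \emph{supersolution} inequality for $v$, using only the fact that $H(x,0)=V(x)$ for the mechanical Hamiltonian \eqref{eq:Hamilton}. First I would record that, since $u_c$ solves \eqref{mech_sys_general} in the viscosity sense and $v=u_c+\langle c,\cdot\rangle$ differs from $u_c$ by the $C^1$ function $\langle c,\cdot\rangle$, one has $D^{\pm}v(x)=D^{\pm}u_c(x)+c$; hence $v$ is a viscosity solution of $H(x,Dv(x))=\alpha(c)$, and in particular \eqref{viscosity supersolution} gives $H(x,p)\geqslant\alpha(c)$ for every $x\in\R^n$ and every $p\in D^-v(x)$. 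Evaluating at $p=0$ and using \eqref{energy_condition}, I get that $0\in D^-v(x)$ would force $V(x)\geqslant\alpha(c)>\max_{\T^n}V\geqslant V(x)$, which is absurd. So the single key fact to establish is
$$
0\notin D^-v(x)\qquad\text{for every }x\in\R^n.
$$

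Granting this, the rest is immediate. If $v$ were differentiable at some $x$ with $Dv(x)=0$, then $0=Dv(x)\in D^-v(x)$, a contradiction, so $v$ has no classical critical point. If $x_0$ were a local minimum of $v$, then $v(y)\geqslant v(x_0)$ near $x_0$ trivially gives $0\in D^-v(x_0)$, again a contradiction, so $v$ has no local minimum. Finally, if $x_0$ is a local maximum, then $v(y)\leqslant v(x_0)$ near $x_0$ yields $0\in D^+v(x_0)$; if $v$ were differentiable at $x_0$, Proposition~\ref{basic_facts_of_superdifferential}(c) would give $D^+v(x_0)=\{Dv(x_0)\}$ and hence $Dv(x_0)=0$, a classical critical point excluded by the previous step. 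Therefore $D^+v(x_0)$ is not a singleton, i.e. $x_0\in\Sigma_v$.

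I do not expect a genuine obstacle here; the only points demanding care are (i) to invoke the supersolution rather than the subsolution side of the viscosity condition --- the subsolution inequality only yields $V(x)\leqslant\alpha(c)$, which holds automatically and is useless --- and (ii) to track the translation by $c$ when passing between $u_c$ and $v$. It is perhaps worth remarking that the conclusion about local maxima cannot come from semiconcavity alone, since semiconcave functions may well possess local minima (for instance $x\mapsto|x|^2-\varepsilon|x|$ on $\R$); it is precisely the equation together with the supercritical energy condition \eqref{energy_condition} that rules this out for $v$.
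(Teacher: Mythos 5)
Your proof is correct, and it rests on exactly the same source of truth as the paper's: the equation plus the energy condition forbid a zero gradient. The packaging is a bit different, and the difference is worth noting. The paper argues at differentiable points, using the eikonal equation to compute $\langle A(x)Dv(x),Dv(x)\rangle = 2(\alpha(c)-V(x))>0$ and hence $Dv(x)\neq 0$; for local minima it then observes that $D^{+}v(x_0)$ and $D^{-}v(x_0)$ are both nonempty, so by the semiconcavity differentiability criterion $v$ would be differentiable there, producing an excluded critical point. You instead isolate the single statement $0\notin D^{-}v(x)$ for all $x$, obtained by evaluating the viscosity \emph{supersolution} inequality at $p=0$; all three assertions then fall out in one line each, and in particular the local-minimum case is handled directly without passing through differentiability. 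Your route is slightly more economical and also a touch more robust: it makes no appeal to $D^{\ast}v(x)$ or to the differentiability criterion, only to the supersolution inequality and the trivial inclusions $0\in D^{-}v$ at a local minimum and $0\in D^{+}v$ at a local maximum. Two small remarks: your appeal to Proposition~\ref{basic_facts_of_superdifferential}(c) for ``differentiable $\Rightarrow D^{+}v$ a singleton'' is the converse of what (c) literally states, but the converse is elementary (differentiability gives $D^{+}v(x)\subset\{Dv(x)\}$, and $D^{+}v(x)\neq\varnothing$ for a semiconcave function); and the closing observation that the equation, not semiconcavity, is what excludes local minima is accurate and a useful sanity check.
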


\begin{proof}
For any $x\in\R^n$, if $v$ is differentiable at $x$ with $Dv(x)=p$, then $D^+v(x)$ is a singleton and $D^+v(x)=D^{\ast}v(x)=\{p\}$ with
$
|p|=\sqrt{2(\alpha(c)-V(x))}>0
$
in view of the energy condition \eqref{energy_condition}. Therefore, $p\neq 0$.
Since $0\in D^+v(x)$ whenever $v$ has a  local maximum at $x$, we deduce that any point at which $v$ has a local maximum must be a singular. Finally,  $D^{\pm}v(x)\neq \varnothing$  at any point $x$ at which $v$ attains a local minimum. So, any such point would be a classical critical point of $v$.
\end{proof}

The above proposition implies that  any $x\in\Sigma_v$ is either a ``saddle point''  or a local maximum point.

\begin{The}\label{equiv}
For $n\geqslant 2$, let $v$, defined as in \eqref{v}, satisfy the energy condition \eqref{energy_condition} and let $x\in\Sigma_v$. Then $0\not\in D^+v(x)$ if and only if $x$ is not a local maximum point of $v$ and $\Lambda_x$ is an $(n-1)$-dimensional Lipschitz submanifold near $x$.
\end{The}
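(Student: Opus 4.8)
The plan is to prove the two implications separately: the ``only if'' part via the Lipschitz implicit function theorem (Proposition~\ref{IFT}), and the ``if'' part by contraposition, combining the convex geometry of $D^+v(x)$ with the tangent-cone characterization of the epi-Lipschitz property.

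\emph{Only if.} Assume $0\notin D^+v(x)$. Then $x$ is not a local maximum of $v$, since a local maximum would force $0\in D^+v(x)$. To treat the level set, let $\hat p$ be the point of the compact convex set $D^+v(x)$ nearest to the origin and put $e:=\hat p/|\hat p|$, so that $\langle p,e\rangle\geqslant|\hat p|=:\delta>0$ for every $p\in D^+v(x)$. By upper semicontinuity of $q\rightsquigarrow D^+v(q)$ (Proposition~\ref{basic_facts_of_superdifferential}(b)) there is a ball $B$ about $x$ on which $\langle p,e\rangle\geqslant\delta/2$ for all $p\in D^+v(q)$, $q\in B$. Writing points of $\R^n$ as $(y,z)$ in coordinates in which $e$ is the last basis vector, one reads off from the semiconcavity inequality that $z\mapsto v(y,z)$ is Lipschitz with a.e.\ derivative $\geqslant\delta/2$ on every vertical segment inside $B$, hence strictly increasing there. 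I would then apply Proposition~\ref{IFT} to $F(y,z):=v(y,z)-v(x)$ with $k=1$: since $v$ is semiconcave, $\partial F(x)=\operatorname{co}D^{\ast}v(x)=D^+v(x)$ by Proposition~\ref{basic_facts_of_superdifferential}(d), so $\pi_z\partial F(x)\subseteq[\delta,+\infty)$ consists of invertible $1\times1$ matrices. Proposition~\ref{IFT} produces a neighborhood $U$ of $y_0$ and a Lipschitz map $\zeta\colon U\to\R$ with $\zeta(y_0)=z_0$ and $v(y,\zeta(y))\equiv v(x)$, and the strict monotonicity in $z$ upgrades this to the statement that, on a sufficiently small box $Q\subseteq B$, the zero set of $F$ is exactly the graph of $\zeta$. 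Hence $\Lambda_x$ is an $(n-1)$-dimensional Lipschitz submanifold near $x$.

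\emph{If.} Assume $x$ is not a local maximum and $\Lambda_x$ is an $(n-1)$-dimensional Lipschitz submanifold near $x$, and suppose for contradiction that $0\in D^+v(x)$. The energy condition \eqref{energy_condition} gives $\langle A(x)p,p\rangle=2(\alpha(c)-V(x))$ for every reachable gradient $p$, hence $|p|\geqslant\delta_0>0$; in particular $0\notin D^{\ast}v(x)$. Since $D^+v(x)=\operatorname{co}D^{\ast}v(x)$, I would write $0=\sum_{i=1}^{k}\lambda_i p_i$ with $p_i\in D^{\ast}v(x)$ pairwise distinct, $\lambda_i>0$, $\sum\lambda_i=1$, necessarily $k\geqslant2$, and set $L:=\operatorname{span}\{p_1,\dots,p_k\}$, so $\dim L\geqslant1$. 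If $L=\R^n$, then $0$ lies in the interior of $\operatorname{co}\{p_1,\dots,p_k\}\subseteq D^+v(x)$, so some ball $\overline{B(0,\rho)}\subseteq D^+v(x)$; applying Proposition~\ref{criterion-Du_semiconcave} with $p=-\rho w$ for an arbitrary unit vector $w$ gives $v(x+tw)\leqslant v(x)-\rho t+\tfrac{C}{2}t^2<v(x)$ for small $t>0$, so $x$ would be a strict local maximum, contradicting the hypothesis. Thus $L$ must be a proper subspace of $\R^n$.

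It remains to rule out $1\leqslant\dim L\leqslant n-1$. Since $0$ is a convex combination of the $p_i$ with all weights positive, the polytope $\operatorname{co}\{p_1,\dots,p_k\}\subseteq D^+v(x)$ contains a relative neighborhood of $0$ in $L$; consequently any $\theta$ with $\langle p,\theta\rangle\geqslant0$ for all $p\in D^+v(x)$ must be orthogonal to $L$, i.e.\ the cone $N:=\{\theta:\langle p,\theta\rangle\geqslant0\ \forall p\in D^+v(x)\}$ satisfies $N\subseteq L^{\perp}$, a proper subspace, and hence has empty interior. On the other hand, the semiconcavity inequality (Proposition~\ref{criterion-Du_semiconcave}) shows $T_{\Lambda^+_x}(x)\subseteq N$, so $C_{\Lambda^+_x}(x)\subseteq N$ has empty interior too. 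But, $\Lambda_x$ being a Lipschitz hypersurface near $x$, a neighborhood of $x$ splits into two open pieces on which $v-v(x)$ has constant sign; since $v$ has no local minimum and $x$ is not a local maximum, these signs are opposite, so near $x$ the superlevel set $\Lambda^+_x$ coincides (up to the invertible linear change of coordinates in the definition) with the epigraph of a Lipschitz function, i.e.\ $\Lambda^+_x$ is epi-Lipschitzian near $x$, which forces $C_{\Lambda^+_x}(x)$ to have nonempty interior --- a contradiction. (This last step is where $n\geqslant2$ enters.) I expect the configuration $1\leqslant\dim L\leqslant n-1$ to be the crux of the argument: the delicate points are recognizing that $0$ sits in the relative interior of a positive-dimensional sub-polytope of $D^+v(x)$, translating this through $T_{\Lambda^+_x}(x)\subseteq N\subseteq L^{\perp}$ into failure of the epi-Lipschitz property, and keeping track of which side of $\Lambda_x$ is which so as to certify that $\Lambda^+_x$ really is epi-Lipschitzian.
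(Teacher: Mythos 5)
Your proof is correct, and both directions land on the paper's conclusion, but the ``if'' direction is structured differently from the paper's. After establishing (as you do) that $\Lambda^+_x$ is epi-Lipschitzian near $x$ and hence $C_{\Lambda^+_x}(x)$ has nonempty interior, the paper selects $n$ linearly independent unit vectors $\theta_1,\dots,\theta_n\in C_{\Lambda^+_x}(x)$ and deduces from semiconcavity (exactly your inequality $T_{\Lambda^+_x}(x)\subseteq N$) that $D^+v(x)$ lies in the pointed cone $C=\{q:\langle q,\theta_k\rangle\geqslant0,\ k=1,\dots,n\}$; since $0$ is an extreme point of $C$, $0\in D^+v(x)$ would force $0\in\mathrm{Ext}\,D^+v(x)=D^{\ast}v(x)$ (Proposition~\ref{Ext_and_reachable}), which is impossible because the energy condition bounds $|p|$ away from zero on $D^{\ast}v(x)$. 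Your proof reaches the same contradiction by instead decomposing $0=\sum\lambda_i p_i$ with $p_i\in D^{\ast}v(x)$ and branching on $\dim L$: when $L=\R^n$ you conclude $0\in\mathrm{int}\,D^+v(x)$ and hence a strict local maximum (contradicting ``not a local max'' directly, without invoking the epi-Lipschitz property at all), and when $L$ is proper you show the polar cone $N\subseteq L^\perp$ has empty interior, contradicting $\mathrm{int}\,C_{\Lambda^+_x}(x)\neq\varnothing$. The paper's version is more compact because the single extreme-point observation subsumes both of your cases (once $D^+v(x)$ sits inside a pointed cone, $0$ cannot be an interior point nor a non-extreme boundary point), whereas your case analysis makes the geometry of $0$'s position in $D^+v(x)$ explicit, and correctly isolates where each of the two hypotheses (``not a local max'' vs.\ ``$\Lambda_x$ is a Lipschitz hypersurface'') is actually used. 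You are also more careful than the paper on two points the paper leaves implicit: you verify the maximal-rank hypothesis of Proposition~\ref{IFT} in the ``only if'' direction, and you justify why ``$\Lambda_x$ Lipschitz hypersurface $+$ $x$ not a local max'' upgrades to ``$\Lambda^+_x$ epi-Lipschitzian'' via the opposite-sign argument (using the paper's earlier observation that $v$ has no local minima).
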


\begin{proof}
Assume first  $0\not\in D^+u(x)$. Then $x$ cannot be a local maximum point of $v$ and, by Proposition \ref{IFT}, $\Lambda_x$ is an $(n-1)$-dimensional Lipschitz manifold near $x$.

Conversely, suppose $x$ is not a local maximum point of $v$ and $\Lambda_x$ is an $(n-1)$-dimensional Lipschitz manifold near $x$. Then $\Lambda^+_x$ is epi-Lipschitzian near $x$ and, as  recalled in section~\ref{se:NSA}, Clarke's tangent cone to $\Lambda^+_x$ at $x$ has nonempty interior. Since $C_{\Lambda^+_x}(x)\subset T_{\Lambda^+_x}(x)$,  
for any $\theta\in C_{\Lambda^+_x}(x)$ with $|\theta|=1$
there are  sequences $\theta_i\in \R^n$ converging to $\theta$ and  $ t_i\in\R^+$ decreasing to $0$ such that
$
x+t_i\theta_i\in \Lambda^+_x
$ for all $i$.
By the semiconcavity of $v$, for any $q\in D^+v(x)$ and all $i$, we have
\begin{equation}\label{basic_facts_of_semiconcavity}
v(x)\leqslant v(x+t_i\theta_i)\leqslant v(x)+\langle q,t_i\theta_i\rangle+\frac C2t_i^2|\theta_i|^2\,,
\end{equation}
which leads to
$$
\langle q,\theta_i\rangle+\frac C2t_i|\theta_i|^2\geqslant 0
$$
and, eventually, to  $\langle q,\theta\rangle\geqslant 0$ for all $q\in D^+v(x)$. Therefore, for any set  $\{\theta_k\}_{k=1}^n$ of linearly independent
 unit vectors generating $C_{\Lambda^+_x}(x)$ we have that
\begin{equation}\label{eq:intersection-half-space}
\langle q,\theta_k\rangle\geqslant 0,\quad\ \text{for all}\ q\in D^+v(x),\quad k=1\ldots,n.
\end{equation}
In other terms,  $D^+v(x)$ is contained in the convex cone
\begin{equation*}
C=\big\{q\in\R^n~:~\langle q,\theta_k\rangle\geqslant 0, \,k=1,\ldots,n \big\}\,.
\end{equation*}
Observe that $0$ is an extreme point of such a cone since $\{\theta_k\}_{k=1}^n$ are linearly independent. Were $0$  also in $D^+v(x)$,  one would have that $0\in\text{Ext}\, D^+v(x)$. This contradicts with Proposition \ref{Ext_and_reachable} which implies that
\begin{equation*}
\text{Ext}\,D^+v(x)=D^{\ast}v(x)\subset\big\{p\in\R^n~:~\langle A(x)p,p\rangle=\rho\big\}\,,
\end{equation*}
where 
$\rho=\sqrt{2(\alpha(c)-V(x))}>0$ in view of condition \eqref{energy_condition}.
\end{proof}

\begin{ex}
Theorem \ref{equiv} does hold true if $v$ is a general semiconcave function. For example, let $v:\R^2\to\R$ be defined as
$$
v(x,y)=
\begin{cases}
\hspace{.cm}
x^2&x\leqslant 0\,,\; y\in \R
\\
\hspace{.cm}
\frac 1{x+1}-1&x> 0\,,\; y\in \R\,.
\end{cases}
$$
It is easy to verify that $(0,0)$ is not a local maximum of $v$ and
\begin{equation*}
\Lambda_{(0,0)}=\{(0,y): y\in\R\}
\end{equation*}
is an $(n-1)$-dimensional Lipschitz manifold, but $D^+v((0,0))=[-1,0]\times\{0\}\ni(0,0)$.
\end{ex}

\begin{Rem}\label{homoclinic_by_sing}
Let $x\in\Sigma_{v}$ be a local maximum point of $v$,
and suppose there exist
antipodal points $p_1, p_2\in D^{\ast}v(x)$, $p_1=-p_2$. Then, by Proposition~\ref{reachable_grad_and_backward}, there exist  $C^1$ arcs $\gamma_1$ and $\gamma_2$ defined on $(-\infty,0]$, which are $(u_c,L_c,\alpha(c))$-calibrated, and $p_i=\frac{\partial L_c}{\partial q}(x,\dot{\gamma_i}(0))$, $i=1,2$. For the mechanical system
$$
H(x,p)=\frac 12\langle A(x)p,p\rangle+V(x),
$$
we have that $\dot{\gamma}_i(0)=A(x)p_i$, $i=1,2$, and $\dot{\gamma}_1(0)=-\dot{\gamma}_2(0)$. Define
$$
\gamma(t)=\left\{
         \begin{array}{ll}
           \gamma_1(t), & \hbox{$t\leqslant0$;} \\
           \gamma_2(-t), & \hbox{$t>0$.}
         \end{array}
       \right.
$$
It is easy to see $\gamma:(-\infty,+\infty)\to\T^n$ is a $C^1$ extremal curve. Moreover, it is well known that both the $\alpha$-limit and the $\omega$-limit of $\gamma$ belong to (certain Aubry classes of) $\mathscr{A}_c$, see, e.g., \cite{Bernard2002}. In fact, $(\gamma,\dot{\gamma})$ is a homoclinic orbit with respect to $\tilde{\mathscr{A}}_c$.

The above situation can indeed occur in weak KAM theory. For example, for the one-dimensional standard mathematical pendulum, there exists a closed interval $[c^-,c^+]$ near $0$ and $\alpha(c)\equiv 0$ when $c$ is contained in such a interval. If $c^-<c<c^+$, then it is well known there is one singular point of the corresponding weak KAM solution, say $x_c$,  and such a point produces a homoclinic orbit to the hyperbolic fixed point in the aforementioned way. Note that of $x\in\Sigma_{u_c}$ for $c\in(c^-,c^+)$, then $x\not\in\mathcal{I}(u_c)$ since $u_c$ is differentiable at each point of $\mathcal{I}(u_c)$. However, $\mathcal{I}(u_c)=\T^1$ for $c=c^{\pm}$ and each $x\in\mathcal{I}(u_{c^{\pm}})\setminus\mathscr{M}_{c^{\pm}}$ also gives a homoclinic orbit in $\tilde{\mathscr{A}}_{c^{\pm}}$ respectively (for the definition of the set $\mathcal{I}(u)$ for a weak KAM solution $u$, see, e.g., \cite{Fathi-Siconolfi2004}). 
\end{Rem}

\subsection{Generalized characteristics}
\label{se:GC}
The concept of generalized characteristic was initiated by Dafermos \cite{Dafermos}, and systematically developed later, see \cite{Albano-Cannarsa,Cannarsa-Sinestrari,Cannarsa-Yu} and the reference therein.

\begin{defn}
A Lipschitz arc $\mathbf{x}:[0,\tau]\to\R^n$ is said to be a {\em generalized characteristic} of the Hamilton-Jacobi equation
$$
H\big(x,Du(x)\big)=E
$$
with energy $E\in\R$, if $\mathbf{x}$ satisfies the differential inclusion
\begin{equation}\label{generalized_characteristics}
\dot{\mathbf{x}}(s)\in\mathrm{co}\, H_p\big(\mathbf{x}(s),D^+u(\mathbf{x}(s))\big),\quad \text{a.e.}\;s\in[0,\tau]\,.
\end{equation}
\end{defn}

A basic criterion for the propagation of singularities along generalized characteristic was given in \cite{Albano-Cannarsa} (see \cite{Cannarsa-Yu,Yu} for an improved version and simplified proof).

\begin{Pro}[\cite{Albano-Cannarsa}]\label{criterion_on_gen_char}
Let $u$ be a viscosity solution of Hamilton-Jacobi equation
$$
H(x,Du(x))=E,
$$
with $E$ not less than Ma\~n\'e's critical value, and let $x_0\in\R^n$. Then there exists a unique  generalized characteristic  $\mathbf{x}:[0,\tau]\to\R^n$ with initial point $\mathbf{x}(0)=x_0$. Moreover, if 
$x_0\in\Sigma_u$, then $\mathbf{x}(s)\in\Sigma_u$ for all $s\in [0,\tau]$. Furthermore, if
\begin{equation}\label{condition_for_propagation_singularities}
0\not\in\mathrm{co} \,H_p(x_0,D^+u(x_0))\,,
\end{equation}
 then  $\mathbf{x}(\cdot)$ is injective for every $s\in[0,\tau]$.
\end{Pro}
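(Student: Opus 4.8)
The plan rests on the fact that a viscosity solution $u$ of $H(x,Du(x))=E$ with $E$ not below Ma\~n\'e's critical value is locally semiconcave, so that Propositions~\ref{criterion-Du_semiconcave} and~\ref{basic_facts_of_superdifferential} apply to $D^+u$. I would organize the argument into four steps: (i) existence of a solution of the differential inclusion \eqref{generalized_characteristics}; (ii) uniqueness, via identification of every such solution with the trajectory of an ODE governed by a canonical selection of $D^+u$; (iii) invariance of $\Sigma_u$; and (iv) injectivity under \eqref{condition_for_propagation_singularities}. For (i), by Proposition~\ref{basic_facts_of_superdifferential} the map $x\rightsquigarrow D^+u(x)$ is upper semicontinuous with nonempty compact convex values and, since $u$ is locally Lipschitz, locally bounded; as $H_p$ is continuous, the same holds for $x\rightsquigarrow G(x):=\mathrm{co}\,H_p(x,D^+u(x))$, and the existence of a Lipschitz arc $\mathbf{x}:[0,\tau]\to\R^n$ with $\mathbf{x}(0)=x_0$ solving $\dot{\mathbf{x}}(s)\in G(\mathbf{x}(s))$ for a.e.\ $s$ is a standard fact about differential inclusions with such right-hand sides.

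For (ii), I would introduce the selection $p_0(x)$ defined as the unique minimizer of $p\mapsto H(x,p)$ over the compact convex set $D^+u(x)$ (uniqueness by strict convexity of $H(x,\cdot)$), and show, through a semiconcavity and chain-rule computation along $\mathbf{x}(\cdot)$---comparing $u(\mathbf{x}(t))-u(\mathbf{x}(s))$ with the two-sided bounds coming from Proposition~\ref{criterion-Du_semiconcave} applied at $\mathbf{x}(s)$ and at $\mathbf{x}(t)$, and letting $t\downarrow s$---that at a.e.\ $s$ the velocity $\dot{\mathbf{x}}(s)$ must coincide with $H_p(\mathbf{x}(s),p_0(\mathbf{x}(s)))$. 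Uniqueness then reduces to uniqueness of the trajectory from $x_0$ of this (a priori discontinuous) vector field, which I would obtain from a Gronwall-type estimate: adding the two semiconcavity inequalities yields the one-sided Lipschitz bound $\langle p_1-p_2,y_1-y_2\rangle\leqslant C|y_1-y_2|^2$ for $p_i\in D^+u(y_i)$, and combining this with the monotonicity of $p\mapsto H_p(x,p)$ and the Lipschitz dependence of $H_p$ on $x$ should give, for two solutions $\mathbf{x}_1,\mathbf{x}_2$, an inequality $\frac{d}{ds}|\mathbf{x}_1(s)-\mathbf{x}_2(s)|^2\leqslant K|\mathbf{x}_1(s)-\mathbf{x}_2(s)|^2$, whence $\mathbf{x}_1\equiv\mathbf{x}_2$.

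For (iii), note that $D^{\ast}u(x_0)\subset\{p:H(x_0,p)=E\}$ while $D^+u(x_0)=\mathrm{co}\,D^{\ast}u(x_0)$; hence $x_0\in\Sigma_u$ forces $H(x_0,p_0(x_0))<E$ by strict convexity. Following Albano--Cannarsa, this strict gap must be propagated: using semiconcavity estimates along $\mathbf{x}(\cdot)$ one shows that $D^+u(\mathbf{x}(s))$ cannot collapse to a single point (equivalently $H(\mathbf{x}(s),p_0(\mathbf{x}(s)))$ stays strictly below $E$), so that $\mathbf{x}(s)\in\Sigma_u$ for every $s\in[0,\tau]$. I expect steps (ii)--(iii) to be the main obstacle: singling out the right selection $p_0$, proving that every generalized characteristic uses it a.e., and showing that the singular structure cannot dissipate are the delicate points; moreover the Gronwall argument requires first passing from the convexified inclusion to pointwise representatives in $H_p(x,D^+u(x))$ and arranging the signs correctly in the presence of the $x$-dependent metric $A$, which is where the positive definiteness of $A$ and the convexity of $H$ in \eqref{eq:Hamilton} are used.

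Finally, for (iv), assume \eqref{condition_for_propagation_singularities}, so that the compact set $G(x_0)$ does not contain the origin. By upper semicontinuity of $x\rightsquigarrow D^+u(x)$ and continuity of $H_p$, this persists on a neighborhood $U$ of $x_0$, so a separation argument produces a unit vector $\nu$ and $\delta>0$ with $\langle w,\nu\rangle\geqslant\delta$ for all $w\in G(x)$ and all $x\in U$. Reducing $\tau$ if necessary so that $\mathbf{x}([0,\tau])\subset U$, we obtain $\frac{d}{ds}\langle\mathbf{x}(s),\nu\rangle\geqslant\delta$ for a.e.\ $s$, hence $s\mapsto\langle\mathbf{x}(s),\nu\rangle$ is strictly increasing and $\mathbf{x}$ is injective on $[0,\tau]$.
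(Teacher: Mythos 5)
This proposition is stated in the paper with a citation to Albano--Cannarsa and Cannarsa--Yu and is \emph{not} reproved there; you are therefore reconstructing the argument of the references. Your overall decomposition into existence, uniqueness via a minimal selection, persistence of singularity, and injectivity is indeed the correct architecture, and steps (i) and (iv) are sound as written. Two of your ``delicate points,'' however, are genuine gaps that the sketch does not close.

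First, in step (ii) the Gronwall estimate $\frac{d}{ds}|\mathbf{x}_1-\mathbf{x}_2|^2\leqslant K|\mathbf{x}_1-\mathbf{x}_2|^2$ does not follow from the one-sided Lipschitz estimate $\langle p_1-p_2,y_1-y_2\rangle\leqslant C|y_1-y_2|^2$ combined with ``monotonicity of $H_p$'' when $A$ depends on $x$. The derivative is
$\frac{d}{ds}|\mathbf{x}_1-\mathbf{x}_2|^2=2\langle A(\mathbf{x}_1)p_1-A(\mathbf{x}_2)p_2,\mathbf{x}_1-\mathbf{x}_2\rangle$,
and although the cross term $\langle (A(\mathbf{x}_1)-A(\mathbf{x}_2))p_2,\Delta x\rangle$ is $O(|\Delta x|^2)$, the term $\langle A(\mathbf{x}_1)(p_1-p_2),\Delta x\rangle$ is not controlled by $\langle p_1-p_2,\Delta x\rangle$ alone: a nonscalar $A$ can rotate $p_1-p_2$ so that the sign information from semiconcavity is lost. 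The correct fix is to replace the Euclidean Lyapunov function by a Riemannian-weighted one such as $Q(\mathbf{x}_1,\mathbf{x}_2)=\tfrac12\langle(A^{-1}(\mathbf{x}_1)+A^{-1}(\mathbf{x}_2))(\mathbf{x}_1-\mathbf{x}_2),\mathbf{x}_1-\mathbf{x}_2\rangle$; then $\frac{d}{ds}Q$ reduces to $2\langle p_1-p_2,\Delta x\rangle+O(|\Delta x|^2)$, which the semiconcavity estimate controls, and $Q\asymp|\Delta x|^2$ closes the Gronwall loop. As written, your argument works for the flat metric (the Cannarsa--Yu setting) but not for general $A(x)$.

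Second, in step (iii) the core of the proof is that the energy gap $\sigma(s):=E-\min_{p\in D^+u(\mathbf{x}(s))}H(\mathbf{x}(s),p)$ stays strictly positive along the generalized characteristic; equivalently, $D^+u(\mathbf{x}(s))$ cannot degenerate to a singleton in finite time. You correctly observe that $x_0\in\Sigma_u$ forces $\sigma(0)>0$ via strict convexity of $H$ and $D^+u(x_0)=\mathrm{co}\,D^\ast u(x_0)$, but the claim that ``this strict gap must be propagated'' is the whole theorem: what one actually proves is a differential inequality of the form $\sigma(s)\geqslant\sigma(0)e^{-Ks}$, obtained from a comparison of the semiconcavity expansion of $u$ at nearby points against the evolution of the minimal selection. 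Asserting it ``following Albano--Cannarsa'' is a citation, not an argument, and this is precisely the step where the references do real work. Finally, a small but worthwhile remark on your use of ``monotonicity of $H_p$'': that inequality pairs $H_p(x,p_1)-H_p(x,p_2)$ against $p_1-p_2$, not against $y_1-y_2$, so it is not the ingredient you need here; only in the flat case do the two pairings coincide.
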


Note that, for mechanical systems,  condition \eqref{condition_for_propagation_singularities} is just $0\not\in A(x)D^+v(x)$, which is equivalent to
\begin{equation}\label{reg_condition}
0\not\in D^+v(x),
\end{equation}
since $A(x$) is positive definite. The  characteristic differential inclusion is
\begin{equation*}
\dot{\mathbf{x}}(s)\in A(\mathbf{x}(s))D^+v(\mathbf{x}(s)).
\end{equation*}

The following properties of generalized characteristics are fundamental (see \cite{Cannarsa-Yu}).

\begin{Pro}\label{uniqueness_of_general_charact}
If $\mathbf{x}:[0,\tau]\to\R^n$ is a generalized characteristic, then the right derivative $\dot{\mathbf{x}}^+(s)$ exists for all $s\in[0,\tau)$ and
\begin{equation}
\label{generalized_characteristics_mech_sys}
\dot{\mathbf{x}}^+(s)=A(\mathbf{x}(s))p(s)\qquad\forall s\in[0,\tau),
\end{equation}
where $p(s)$ is the unique point of $D^+v(\mathbf{x}(s))$ such that
\begin{equation}\label{minimality}
\frac 12\langle A(\mathbf{x}(s))p(s),p(s)\rangle=\min_{q\in D^+v(\mathbf{x}(s))}\frac 12\langle A(\mathbf{x}(s))q,q\rangle.
\end{equation}
Moreover, $\dot{\mathbf{x}}^+$ is right-continuous.
\end{Pro}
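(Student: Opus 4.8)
The strategy is to reduce the differential inclusion \eqref{generalized_characteristics} to the mechanical form and then to study the behaviour of $v$ along $\mathbf{x}$. Since, for the Hamiltonian \eqref{eq:Hamilton}, $H_p(x,p)=A(x)p$ and $D^+v(x)$ is convex, one has $\mathrm{co}\,H_p(x,D^+v(x))=A(x)D^+v(x)$, so $\dot{\mathbf{x}}(s)\in A(\mathbf{x}(s))D^+v(\mathbf{x}(s))$ for a.e.\ $s$, and at a.e.\ $s$ we may write $\dot{\mathbf{x}}(s)=A(\mathbf{x}(s))\xi(s)$ with $\xi(s)\in D^+v(\mathbf{x}(s))$. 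The first key point is an elementary chain rule: by Proposition~\ref{criterion-Du_semiconcave}, for every $p\in D^+v(\mathbf{x}(s))$ and every $h$ we have $v(\mathbf{x}(s+h))-v(\mathbf{x}(s))\leqslant\langle p,\mathbf{x}(s+h)-\mathbf{x}(s)\rangle+\tfrac C2|\mathbf{x}(s+h)-\mathbf{x}(s)|^2$; dividing by $h>0$ and by $h<0$ and passing to the limit at any $s$ where both $v\circ\mathbf{x}$ and $\mathbf{x}$ are differentiable (i.e.\ a.e.\ $s$), this forces $\tfrac{d}{ds}v(\mathbf{x}(s))=\langle p,\dot{\mathbf{x}}(s)\rangle$ for \emph{every} $p\in D^+v(\mathbf{x}(s))$. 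Choosing $p=\xi(s)$ gives $\tfrac{d}{ds}v(\mathbf{x}(s))=\langle A(\mathbf{x}(s))\xi(s),\xi(s)\rangle\geqslant0$, so $v\circ\mathbf{x}$ is nondecreasing; comparing two choices of $p$ gives $\langle A(\mathbf{x}(s))\xi(s),q-\xi(s)\rangle=0$ for all $q\in D^+v(\mathbf{x}(s))$, whence $\xi(s)$ is the unique minimizer $p(s)$ of $q\mapsto\tfrac12\langle A(\mathbf{x}(s))q,q\rangle$ over the convex compact set $D^+v(\mathbf{x}(s))$ (uniqueness by strict convexity). Thus \eqref{generalized_characteristics_mech_sys}--\eqref{minimality} hold for a.e.\ $s$.

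To promote this to the right derivative at an arbitrary $s_0\in[0,\tau)$, put $x_0=\mathbf{x}(s_0)$ and let $p_0$ be the minimizer in \eqref{minimality} at $s_0$. The difference quotients $w_h:=h^{-1}(\mathbf{x}(s_0+h)-\mathbf{x}(s_0))=h^{-1}\int_{s_0}^{s_0+h}A(\mathbf{x}(r))p(r)\,dr$ are bounded, so it suffices to show that every cluster point $w$ of $w_h$ as $h\to0^+$ equals $A(x_0)p_0$. By upper semicontinuity of $x\rightsquigarrow D^+v(x)$ (Proposition~\ref{basic_facts_of_superdifferential}(b)) and continuity of $A$, each such $w$ has the form $w=A(x_0)q$ with $q\in D^+v(x_0)$. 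On the other hand, the chain rule yields the energy identity $v(\mathbf{x}(s_0+h))-v(x_0)=\int_{s_0}^{s_0+h}\langle A(\mathbf{x}(r))^{-1}\dot{\mathbf{x}}(r),\dot{\mathbf{x}}(r)\rangle\,dr$, while semiconcavity at $x_0$ with the vector $p_0$ gives $v(\mathbf{x}(s_0+h))-v(x_0)\leqslant\langle p_0,\mathbf{x}(s_0+h)-x_0\rangle+\tfrac C2|\mathbf{x}(s_0+h)-x_0|^2$. Dividing by $h$, applying Jensen's inequality to the strictly convex integrand $\eta\mapsto\langle A(x_0)^{-1}\eta,\eta\rangle$ (after replacing $A(\mathbf{x}(r))^{-1}$ by $A(x_0)^{-1}$, an $O(h)$ perturbation on $[s_0,s_0+h]$), and letting $h\to0^+$ along the chosen sequence, one obtains $\langle A(x_0)^{-1}w,w\rangle\leqslant\langle p_0,w\rangle$, i.e.\ $\langle A(x_0)q,q\rangle\leqslant\langle A(x_0)p_0,q\rangle$. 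By the Cauchy--Schwarz inequality for the inner product $\langle A(x_0)\cdot,\cdot\rangle$ this gives $\langle A(x_0)q,q\rangle\leqslant\langle A(x_0)p_0,p_0\rangle$, and since $q\in D^+v(x_0)$ and $p_0$ is the minimizer, equality holds; uniqueness of the minimizer then forces $q=p_0$. Hence $\dot{\mathbf{x}}^+(s_0)$ exists and equals $A(x_0)p_0$, which is \eqref{generalized_characteristics_mech_sys}--\eqref{minimality} at every $s_0$.

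It remains to prove that $\dot{\mathbf{x}}^+$ is right-continuous. If $s_k\downarrow s_0$ and $\dot{\mathbf{x}}^+(s_k)=A(\mathbf{x}(s_k))p(s_k)\to w$, then, as above, $w=A(x_0)q^*$ with $q^*\in D^+v(x_0)$, and $(s_k-s_0)^{-1}(\mathbf{x}(s_k)-x_0)\to A(x_0)p_0$ by the previous paragraph. Using that $v\circ\mathbf{x}$ is nondecreasing together with semiconcavity at $\mathbf{x}(s_k)$ and passing to the limit, one readily obtains $\langle A(x_0)p_0,q^*-p_0\rangle=0$; the remaining step---upgrading this, with the help of the uniqueness of the minimizer, to $q^*=p_0$---is the main obstacle, since the minimal section of an upper semicontinuous convex-valued multimap need not be right-continuous in general. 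It is here that one must exploit the full structure, e.g.\ the stability of generalized characteristics with respect to the initial point: from the one-sided Lipschitz estimate $\langle p_1-p_2,x_1-x_2\rangle\leqslant C|x_1-x_2|^2$ valid for $p_i\in D^+v(x_i)$ and a Gronwall argument one gets $|\mathbf{x}(s_k+t)-\mathbf{x}(s_0+t)|\leqslant e^{Ct}|\mathbf{x}(s_k)-\mathbf{x}(s_0)|$ for $t\geqslant0$, and a careful comparison of the two characteristics $\mathbf{x}(s_k+\cdot)$ and $\mathbf{x}(s_0+\cdot)$ near $t=0$ then pins down $w=\dot{\mathbf{x}}^+(s_0)$.
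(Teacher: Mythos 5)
The paper states this proposition without proof, citing \cite{Cannarsa-Yu}, so there is no internal proof to compare against. Your first two paragraphs are sound and essentially self-contained: the chain-rule identity $\tfrac{d}{ds}v(\mathbf{x}(s))=\langle p,\dot{\mathbf{x}}(s)\rangle$ for \emph{every} $p\in D^+v(\mathbf{x}(s))$ at a.e.\ $s$ is correctly obtained from the semiconcavity inequality applied with both $h>0$ and $h<0$, and the Jensen/Cauchy--Schwarz comparison of the energy identity with the semiconcavity estimate at $x_0$ is a clean way to force every cluster point of the difference quotients $w_h$ to equal $A(x_0)p_0$, giving existence of $\dot{\mathbf{x}}^+(s_0)$ at every point together with the minimality characterization.

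The right-continuity claim, however, is genuinely not established---as you yourself acknowledge. The facts you can extract (upper semicontinuity of $D^+v$, the one-sided Lipschitz bound $\langle p_1-p_2,x_1-x_2\rangle\leqslant C|x_1-x_2|^2$, and $\tfrac{\mathbf{x}(s_k)-x_0}{s_k-s_0}\to A(x_0)p_0$) yield only that any cluster point $q^*$ of $p(s_k)$ lies in $D^+v(x_0)$ and minimizes the \emph{linear} functional $q\mapsto\langle A(x_0)p_0,q\rangle$ over $D^+v(x_0)$; so does $p_0$, by the variational inequality for the quadratic minimizer, but the set of minimizers of a linear functional over a compact convex set is a face that may contain more than one point, so $q^*=p_0$ does not follow from uniqueness of the \emph{quadratic} minimizer. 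The Gronwall stability estimate $|\mathbf{x}(s_k+t)-\mathbf{x}(s_0+t)|\leqslant e^{Ct}|\mathbf{x}(s_k)-\mathbf{x}(s_0)|$ compares \emph{positions} along two time-shifted copies of the same arc; it is unclear how to extract from it information about the right \emph{derivatives} without precisely the kind of monotonicity-of-minimal-selection argument one is trying to prove. The proof in \cite{Cannarsa-Yu} exploits the dissipative structure of the inclusion (a gradient-ascent flow for a semiconcave potential, up to the smooth bounded factor $A$) to establish the required one-sided continuity of the minimal selection, which is then combined with the $\liminf$ bound coming from upper semicontinuity to close the gap. Without that ingredient the third assertion of the proposition is not proved.
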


We now turn to study the behavior of a weak KAM solution $v$ along a generalized characteristic.

\begin{The}\label{increasing}
Let $v$, defined as \eqref{v}, satisfy the energy condition \eqref{energy_condition}, and let $\mathbf{x}:[0,\tau]\to\R^n$ be a generalized characteristic such that $\dot{\mathbf{x}}^+(s)\not=0$, $\forall s\in [0,\tau]$. Then
\begin{equation}\label{eq:increasing}
v(\mathbf{x}(s_1))<v(\mathbf{x}(s_2)),\quad\ \text{for all}\quad 0\leqslant s_1<s_2\leqslant\tau.
\end{equation}

Moreover, if $x\in\Sigma_v$  is not a local maximum point of $v$ and the level set $\Lambda_x$ is an $(n-1)$-dimensional Lipschitz manifold near $x$, then there exists a singular generalized characteristic $\mathbf{x}:[0,\tau']\to\R^n$ with $\mathbf{x}(0)=x$ and $\dot{\mathbf{x}}^+(s)\not=0$ for all $s\in [0,\tau']$. In particular, $\mathbf{x}$ is not constant.
\end{The}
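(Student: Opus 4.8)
The plan is to prove the two assertions in turn. The first one (strict monotonicity of $v$ along $\mathbf{x}$) is the analytic heart; the second one (existence of a characteristic with nonvanishing right derivative) is an assembly of Theorem~\ref{equiv} and Propositions~\ref{criterion_on_gen_char}--\ref{uniqueness_of_general_charact}.

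For the monotonicity, set $\phi(s):=v(\mathbf{x}(s))$. As a composition of a locally Lipschitz function with a Lipschitz arc, $\phi$ is Lipschitz on $[0,\tau]$, hence absolutely continuous, so $\phi(s_2)-\phi(s_1)=\int_{s_1}^{s_2}\phi'(s)\,ds$ and it suffices to show $\phi'(s)>0$ for a.e.\ $s$. Fix $s$ at which both $\mathbf{x}$ and $\phi$ are differentiable (a.e.\ $s$); there $\dot{\mathbf{x}}(s)=\dot{\mathbf{x}}^+(s)\neq0$ by hypothesis, and, since $\mathbf{x}$ satisfies \eqref{generalized_characteristics} with $H_p(x,p)=A(x)p$ and $D^+v$ is convex, one has $A(\mathbf{x}(s))^{-1}\dot{\mathbf{x}}(s)\in D^+v(\mathbf{x}(s))$. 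Now, for any $p\in D^+v(\mathbf{x}(s))$, I would insert $y=\mathbf{x}(s\pm h)$ into the semiconcavity estimate \eqref{criterion_for_lin_semiconcave} centered at $x=\mathbf{x}(s)$, divide by $h>0$, and let $h\downarrow0$; the term $\tfrac C2|y-x|^2$ is $O(h)$ and drops out, and the two choices of sign give $\phi'(s)\leqslant\langle p,\dot{\mathbf{x}}(s)\rangle$ and $\phi'(s)\geqslant\langle p,\dot{\mathbf{x}}(s)\rangle$. Hence $\phi'(s)=\langle p,\dot{\mathbf{x}}(s)\rangle$ for \emph{every} $p\in D^+v(\mathbf{x}(s))$, and taking $p=A(\mathbf{x}(s))^{-1}\dot{\mathbf{x}}(s)$ yields $\phi'(s)=\langle A(\mathbf{x}(s))^{-1}\dot{\mathbf{x}}(s),\dot{\mathbf{x}}(s)\rangle>0$, because $A(\mathbf{x}(s))^{-1}$ is positive definite and $\dot{\mathbf{x}}(s)\neq0$. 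This proves \eqref{eq:increasing}.

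For the existence part, suppose $x\in\Sigma_v$ is not a local maximum of $v$ and $\Lambda_x$ is an $(n-1)$-dimensional Lipschitz submanifold near $x$. By Theorem~\ref{equiv} this is equivalent to $0\notin D^+v(x)$, hence to the nondegeneracy condition \eqref{condition_for_propagation_singularities} (as $A(x)$ is invertible, $0\notin D^+v(x)$ iff $0\notin A(x)D^+v(x)$). Proposition~\ref{criterion_on_gen_char} then produces a generalized characteristic $\mathbf{x}\colon[0,\tau]\to\R^n$ with $\mathbf{x}(0)=x$, which stays in $\Sigma_v$ and is injective. By Proposition~\ref{uniqueness_of_general_charact}, $\dot{\mathbf{x}}^+(0)=A(x)p(0)$ where $p(0)$ minimizes $q\mapsto\langle A(x)q,q\rangle$ over $D^+v(x)$; since $0\notin D^+v(x)$ and this set is compact, $p(0)\neq0$, so $\dot{\mathbf{x}}^+(0)\neq0$. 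As $\dot{\mathbf{x}}^+$ is right-continuous (Proposition~\ref{uniqueness_of_general_charact}), there is $\tau'\in(0,\tau]$ with $|\dot{\mathbf{x}}^+(s)|\geqslant\tfrac12|\dot{\mathbf{x}}^+(0)|>0$ for all $s\in[0,\tau']$. The restriction $\mathbf{x}|_{[0,\tau']}$ is then the desired singular characteristic, and it is nonconstant — either by the injectivity just quoted, or, more strongly, because $v$ is strictly increasing along it by the first part.

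The only step that needs care is the identity $\phi'(s)=\langle p,\dot{\mathbf{x}}(s)\rangle$ for all $p\in D^+v(\mathbf{x}(s))$ at a.e.\ $s$: since semiconcavity only bounds $v$ from above, the lower bound on $\phi'$ must be obtained by testing the \emph{past} increment $y=\mathbf{x}(s-h)$; and since everything is asserted only almost everywhere, one should check that the (measure-zero) sets where $\mathbf{x}$ or $\phi$ fail to be differentiable, together with any constant sub-arcs, are harmless. The remainder is a routine combination of the cited results.
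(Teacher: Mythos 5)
Your argument is correct, and the overall strategy (derivative positivity plus integration for the first claim; Theorem~\ref{equiv}, Proposition~\ref{criterion_on_gen_char} and Proposition~\ref{uniqueness_of_general_charact} for the second) is the same as the paper's, but the technical route to the derivative identity is genuinely different and a bit more economical. The paper computes the \emph{right} derivative of $\phi(s)=v(\mathbf{x}(s))$ at \emph{every} $s_0$, invoking the representation theorem for directional derivatives of semiconcave functions to write it as $\min_{p\in D^+v(x_0)}\langle p,\dot{\mathbf{x}}^+(s_0)\rangle$, and then uses the minimality property \eqref{minimality} together with a separating-hyperplane argument for the ellipsoid to identify that minimum with $\langle p_0,A(x_0)p_0\rangle$. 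You instead work at the a.e.~$s$ where $\mathbf{x}$, $\phi$ are differentiable and the inclusion \eqref{generalized_characteristics} holds, and sandwich $\phi'(s)=\langle p,\dot{\mathbf{x}}(s)\rangle$ for every $p\in D^+v(\mathbf{x}(s))$ by testing the semiconcavity estimate with \emph{both} the future and the past increment of $\mathbf{x}$; choosing $p=A^{-1}(\mathbf{x}(s))\dot{\mathbf{x}}(s)$, which lies in $D^+v(\mathbf{x}(s))$ precisely by the inclusion, then gives positivity directly. This avoids both the minimality of $p(s)$ and the separation step, at the (harmless) cost of only controlling $\phi'$ a.e.\ rather than the one-sided derivative pointwise — absolute continuity of $\phi$ covers the gap exactly as you note. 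For the second claim, the paper derives $0\notin D^+v(\mathbf{x}(s))$ on a small interval from the upper semicontinuity of $D^+v$ and then appeals to \eqref{minimality}, whereas you go straight through the right-continuity of $\dot{\mathbf{x}}^+$ from Proposition~\ref{uniqueness_of_general_charact}; both are sound, and yours is slightly shorter. Your final remark about which a.e.\ set to intersect (differentiability of $\mathbf{x}$, of $\phi$, and validity of the inclusion) is exactly the point worth making explicit, and it is handled correctly.
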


\begin{proof}
The proof of \eqref{eq:increasing} is similar to that of \cite[Theorem 1]{acns}.
Let 
\begin{equation*}
p(s):=A^{-1}(\mathbf{x}(s))\dot{\mathbf{x}}^+(s)\in D^+v(\mathbf{x}(s))\,, \qquad s\in[0,\tau)\,,
\end{equation*}
like in Proposition \ref{uniqueness_of_general_charact}. 
It suffices to show that the right derivative of $v(\mathbf{x}(s))$ is positive, since then \eqref{eq:increasing} follows by integration (recall that $v(\mathbf{x}(s))$ is locally Lipschitz).
Let $s_0\in[0,\tau)$, and set
$$
x_0=\mathbf{x}(s_0),\quad p_0=A^{-1}(x_0)\dot{\mathbf{x}}^+(s_0)\in D^+v(x_0).
$$
Recall that $v$ is a viscosity solution of
$$
\frac 12\langle A(x)Dv(x),Dv(x)\rangle+V(x)=E:=\alpha(c),
$$
and set $\bar{\rho}=\sqrt{2(E-V(x_0))}$, and denote by $E_{\rho}(x_0)$ the ellipsoid
\begin{equation}\label{eq:ellipsoid}
E_{\rho}(x_0)=\big\{p\in\R^n~:~\langle p, A(x_0)p\rangle\leqslant\rho\big\}
\end{equation}
for all $0<\rho\leqslant\bar{\rho}$. From Proposition \ref{uniqueness_of_general_charact}, it follows that
$$
\rho_0:=\langle p_0, A(x_0)p_0\rangle\leqslant\langle p, A(x_0)p\rangle,\quad \forall\,p\in D^+v(x_0)\,.
$$
Being $\langle p, A(x_0)p\rangle$ strictly convex, we have that $E_{\rho_0}(x_0)\cap D^+v(x_0)=\{p_0\}$. Since both $E_{\rho_0}(x_0)$ and $D^+v(x_0)$ are compact convex subsets of $\R^n$, the support hyperplane to $E_{\rho}(x_0)$ at $p_0$ separates $E_{\rho}(x_0)$ and $D^+v(x_0)$, i.e.
$$
\langle p_0, A(x_0)p_0\rangle\leqslant\langle p, A(x_0)p_0\rangle,\quad p\in D^+v(x_0).
$$
Then, using the Lipschitz continuity of $v$ and the representation of the directional derivative of a semiconcave function (see \cite[Theorem 3.3.6]{Cannarsa-Sinestrari}), we have
\begin{align*}
\frac d{ds^+}v(\mathbf{x}(s))_{|_{s=s_0}}&=\lim_{h\to0^+}\frac{v(x_0+h\dot{\mathbf{x}}^+(s_0))-v(x_0)}h\\
                             &=\min_{p\in D^+v(x_0)}\langle p,\dot{\mathbf{x}}^+(s_0)\rangle\\
                             &=\min_{p\in D^+v(x_0)}\langle p,A(x_0)p_0\rangle\\
                             &=\langle p_0,A(x_0)p_0\rangle.
\end{align*}
Since $p(s)\not=0$ for every $ s\in [0,\tau]$ by  assumption, \eqref{eq:increasing} is proved.

In order to prove the last part of the conclusion, observe that the regularity condition $0\not\in D^+v(x)$ holds true by Theorem \ref{equiv}. Then, the existence of the singular generalized characteristic is obtained by Proposition \ref{criterion_on_gen_char}. Using the upper semicontinuity of $D^+v$,  for  $\varepsilon>0$ small enough, there exists $\delta>0$ such that $D^+v(\mathbf{x}(s))$ is contained in the $\varepsilon$-neighborhood of $D^+v(x)$ for all $s\in[0,\delta]$. Therefore, $0\not\in D^+v(\mathbf{x}(s))$, or, by \eqref{minimality}, $\dot{\mathbf{x}}^+(s)\not=0$ for all $s\in[0,\delta]$.
\end{proof}

\begin{Cor}
For any $x\in\Sigma_v$, $0\not\in D^+v(x)$ if and only if there exists a singular generalized characteristic with initial point $x$ such that $\dot{\mathbf{x}}^+(0)\not=0$.
\end{Cor}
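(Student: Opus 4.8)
The plan is to obtain the equivalence as an immediate consequence of Propositions~\ref{criterion_on_gen_char} and~\ref{uniqueness_of_general_charact}. The organizing observation is that, by Proposition~\ref{uniqueness_of_general_charact}, the right derivative of any generalized characteristic at its starting point $x$ equals $A(x)p_0$, where $p_0$ is the unique point of $D^+v(x)$ minimizing the quadratic form $q\mapsto\frac12\langle A(x)q,q\rangle$ (cf.\ \eqref{minimality}). Since $A(x)$ is positive definite, $\dot{\mathbf{x}}^+(0)=0$ if and only if $p_0=0$, and the latter happens exactly when $0\in D^+v(x)$ — for $0$ is then the global, hence the constrained, minimizer of a positive definite quadratic form. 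So both implications reduce to this remark.

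For the forward implication I would argue as follows. If $0\not\in D^+v(x)$, then, $A(x)$ being positive definite and $D^+v(x)$ convex, condition \eqref{reg_condition} is equivalent to \eqref{condition_for_propagation_singularities}; moreover $\alpha(c)$ exceeds Ma\~n\'e's critical value by \eqref{energy_condition}. Hence Proposition~\ref{criterion_on_gen_char} yields a generalized characteristic $\mathbf{x}\colon[0,\tau]\to\R^n$ with $\mathbf{x}(0)=x$, which is singular (i.e.\ $\mathbf{x}(s)\in\Sigma_v$ for all $s$) because $x\in\Sigma_v$. By the remark above, $p_0\neq 0$, so $\dot{\mathbf{x}}^+(0)=A(x)p_0\neq0$.

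For the converse: given a singular generalized characteristic $\mathbf{x}$ with $\mathbf{x}(0)=x$ and $\dot{\mathbf{x}}^+(0)\neq0$, Proposition~\ref{uniqueness_of_general_charact} again writes $\dot{\mathbf{x}}^+(0)=A(x)p_0$ with $p_0$ the $A(x)$-minimal point of $D^+v(x)$; invertibility of $A(x)$ forces $p_0\neq0$. If $0$ belonged to $D^+v(x)$, then by uniqueness of the minimizer in \eqref{minimality} we would have $p_0=0$, hence $\dot{\mathbf{x}}^+(0)=0$, a contradiction. Therefore $0\not\in D^+v(x)$.

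I do not expect a genuine obstacle: this corollary is bookkeeping on top of the two cited propositions. The only points to be careful about are the passage between $0\notin\mathrm{co}\,H_p(x,D^+v(x))$ and $0\notin D^+v(x)$ — which relies on positive-definiteness of $A(x)$ together with convexity of $D^+v(x)$, as already observed after Proposition~\ref{criterion_on_gen_char} — and the fact that the characteristic produced in the forward direction automatically stays in $\Sigma_v$, which is part of the statement of Proposition~\ref{criterion_on_gen_char}. (One could alternatively route the argument through Theorems~\ref{equiv} and~\ref{increasing}, but the direct path above is shorter.)
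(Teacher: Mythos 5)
Your proof is correct and follows essentially the same route as the paper, resting on Propositions~\ref{criterion_on_gen_char} and~\ref{uniqueness_of_general_charact}; the converse direction coincides with the paper's argument. The only difference is in the forward implication, where you read $\dot{\mathbf{x}}^+(0)=A(x)p_0\neq0$ directly off the $A(x)$-minimal selection in \eqref{minimality} rather than invoking Theorem~\ref{increasing} as the paper does, which is if anything a slight streamlining.
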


\begin{proof}
By Proposition \ref{criterion_on_gen_char}, for any $x\in\Sigma_v$ with $0\not\in D^+v(x)$, there is a singular generalized characteristic $\mathbf{x}:[0,\tau]\to\R^n$ with initial point $x$. Moreover, $\dot{\mathbf{x}}(s)\not=0$ for all $s\in [0,\tau]$ by Theorem \ref{increasing}. 

Conversely, if $\mathbf{x}:[0,\tau]\to\R^n$ is a singular generalized characteristic with $\mathbf{x}(0)=x$ and $\dot{\mathbf{x}}^+(0)\not=0$, then $0\not\in D^+v(x)$ by  \eqref{generalized_characteristics_mech_sys} and \eqref{minimality}.
\end{proof}

\subsection{Local propagation of singularities}
Let $v$ be defined as in \eqref{v}. Recall that $v$ is a locally semiconcave function with linear modulus.

Let us denote by $S^{n-1}$ the
$(n-1)$-dimensional sphere $\{x\in\R^n: |x|=1\}$. The following proposition, that we state without proof, is the extension of a result due to \cite{Cannarsa-Yu}, in the case of the flat metric on $\T^n$, to a general Riemannian metric.

\begin{Pro}\label{pr:wKAM}
Let $u_c$ be a $\T^n$-periodic locally semiconcave solution of \eqref{mech_sys_general} satisfying the energy condition \eqref{energy_condition}. Let $x_0\in\Sigma_{u_c}$ and let $\Omega$ be an arbitrary bounded open set containing $x_0$ such that $\partial \Omega$ is homeomorphic to $S^{n-1}$. Then $\partial \Omega\cap \Sigma_{u_c}\ne\varnothing$.
\end{Pro}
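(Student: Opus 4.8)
The plan is to argue by contradiction along the lines of Cannarsa and Yu~\cite{Cannarsa-Yu}: assuming $\partial\Omega$ avoids the singular set, transport singularities outward from $x_0$, first along generalized characteristics and, at the points where these degenerate, along the singular arcs produced by the Albano--Cannarsa propagation principle, and then invoke the topology of $\partial\Omega$ to rule out that the singular set stays trapped inside $\Omega$. Throughout I take $n\geqslant 2$, as is implicit in the statement (cf.\ Theorem~\ref{equiv}).

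First I would set up the contradiction: suppose $\partial\Omega\cap\Sigma_{u_c}=\varnothing$ and pass to $v=u_c+\langle c,\cdot\rangle$, so that $\Sigma_v=\Sigma_{u_c}$ and $v$ is a locally semiconcave viscosity solution of $H(x,Dv(x))=\alpha(c)$; by \eqref{energy_condition}, $v$ has no classical critical point (were $Dv(x)=0$ we would get $V(x)=\alpha(c)$). By Proposition~\ref{basic_facts_of_superdifferential}(c), $v$ is differentiable at every point of $\partial\Omega$, and the equation together with \eqref{energy_condition} gives $\langle A(x)Dv(x),Dv(x)\rangle=2\big(\alpha(c)-V(x)\big)\geqslant 2\big(\alpha(c)-\max_{x\in\T^n}V(x)\big)>0$, so $Dv\neq0$ on $\partial\Omega$. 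Next I would record the two propagation mechanisms available at a point $x\in\Sigma_v$. If $0\notin D^+v(x)$, Proposition~\ref{criterion_on_gen_char} gives a unique injective singular generalized characteristic $\mathbf{x}\colon[0,\tau]\to\R^n$ with $\mathbf{x}(0)=x$ and $\mathbf{x}([0,\tau])\subset\Sigma_v$, and by Proposition~\ref{uniqueness_of_general_charact} and Theorem~\ref{increasing} one has $\dot{\mathbf{x}}^{+}(0)\neq0$ and $s\mapsto v(\mathbf{x}(s))$ strictly increasing as long as $\dot{\mathbf{x}}^{+}$ does not vanish. If $0\in D^+v(x)$, then, since every reachable gradient of $v$ at $x$ lies on the ellipsoid $\{p:\langle A(x)p,p\rangle=\rho\}$ with $\rho>0$ as in the proof of Theorem~\ref{equiv}, the zero vector is not a reachable gradient of $v$ at $x$, so the propagation principle of Albano and the first author~\cite{Albano-Cannarsa} (see also~\cite{Cannarsa-Yu}) furnishes a non-constant Lipschitz singular arc through $x$. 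In either case every point of $\Sigma_v$ is the origin of a non-trivial singular arc.

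The last step, which is the hard part, is to glue these arcs into a connected subset $K$ of $\Sigma_v$ containing $x_0$ that necessarily meets $\partial\Omega$, contradicting the standing assumption. Following a singular generalized characteristic from $x_0$, the arc cannot become eventually constant and cannot cross $\partial\Omega$ (all its points are singular, while $\partial\Omega$ is not); since $v$ increases strictly along it (Theorem~\ref{increasing}) and is bounded on the compact set $\bar\Omega$, the characteristic must accumulate at some $x^{*}\in\Omega$ with $\dot{\mathbf{x}}^{+}=0$, i.e.\ with $0\in D^+v(x^{*})$, and such a point is again singular (otherwise $v$ would be differentiable at $x^{*}$ with $Dv(x^{*})=0$, a classical critical point); from $x^{*}$ one switches to an Albano--Cannarsa arc and continues, the strict monotonicity of $v$ on the generalized-characteristic portions preventing the procedure from cycling indefinitely. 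Hence $K$ is not contained in any compact subset of $\Omega$, and since $\bar\Omega$ is compact while $\partial\Omega\cong S^{n-1}$ separates $\Omega$ from its complement, $K$ must meet $\partial\Omega$. Carrying out this last paragraph rigorously --- controlling the global behaviour of generalized characteristics inside $\Omega$ and excluding that the singular continuum through $x_0$ is ``capped off'' --- is precisely the content of the Cannarsa--Yu theorem~\cite{Cannarsa-Yu}, proved there for the flat metric by a degree argument for the (nonvanishing on $\partial\Omega$) generalized-characteristic field together with the semiflow property of generalized characteristics; its extension to $H(x,p)=\tfrac12\langle A(x)p,p\rangle+V(x)$ is routine, since $A$ being smooth, symmetric and positive definite, the ellipsoids \eqref{eq:ellipsoid}, the strict-convexity estimates in the proof of Theorem~\ref{increasing}, and the uniqueness and continuity properties of generalized characteristics all carry over with no essential change.
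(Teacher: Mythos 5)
The paper \emph{states this proposition without proof}, explicitly deferring to \cite{Cannarsa-Yu} for the flat-metric case and asserting that the passage to a general Riemannian metric $\langle A(\cdot)\cdot,\cdot\rangle$ is routine. So there is no internal proof to compare against; still, your sketch contains a genuine gap and a circularity that would block any attempt to fill it in along the lines you propose.

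The gap is in step (b), where you pass from $0\in D^+v(x)$ to a singular arc by observing $0\notin D^{\ast}v(x)$. That observation is vacuous here: under the energy condition one always has $D^{\ast}v(x)\subset\partial E_\rho(x)$ with $\rho>0$, so $0\notin D^{\ast}v(x)$ is automatic and carries no information. What the Albano--Cannarsa criterion from \cite{alca99} actually requires is $\partial D^+v(x)\setminus D^{\ast}v(x)\neq\varnothing$, and this can fail even though $0\notin D^{\ast}v(x)$: if $D^{\ast}v(x)=\partial E_\rho(x)$, then $D^+v(x)=\mathrm{co}\,D^{\ast}v(x)=E_\rho(x)$ by Proposition~\ref{basic_facts_of_superdifferential}(d), so $\partial D^+v(x)=\partial E_\rho(x)=D^{\ast}v(x)$ and the criterion is violated, while $0$ sits in the \emph{interior} of $D^+v(x)$. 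Excluding this degenerate configuration is precisely what the proof of Theorem~\ref{th:wKAM} does, and that proof \emph{uses} Proposition~\ref{pr:wKAM}. So your argument for the proposition silently appeals to a consequence of the proposition itself.

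Two secondary problems with the gluing step. First, once you switch at $x^{*}$ from a generalized characteristic to an Albano--Cannarsa singular arc, you lose the strict monotonicity of $v$: Theorem~\ref{increasing} applies to generalized characteristics with nonvanishing $\dot{\mathbf{x}}^+$, and the \cite{alca99} arcs are merely Lipschitz singular arcs, not solutions of the characteristic differential inclusion. So the ``strict monotonicity prevents cycling'' mechanism you invoke to control the concatenation does not survive the switch. Second, your final paragraph simply declares the gluing to be ``precisely the content of the Cannarsa--Yu theorem,'' and then describes their proof (degree theory for the generalized-characteristic field on $\partial\Omega$ plus the semiflow property) as something different from the arc-concatenation you have been building. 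At that point the sketch reduces to the same citation the paper makes, and the preceding two paragraphs have not added a proof.
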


We will now show that singularities actually propagate along a Lipschitz continuous curve, answering the question raised in \cite[Remark~6.2]{Cannarsa-Yu}.

\begin{The}\label{th:wKAM}
Let $u_c$ be a $\T^n$-periodic semiconcave solution of \eqref{mech_sys_general} satisfying the energy condition \eqref{energy_condition}. If  $x_0\in\Sigma_{u_c}$, then there exists a Lipschitz arc
$\mathbf{x}:[0,\tau)\to\R^n$ such that $\mathbf{x}(0)=x_0\,,\;\dot{\mathbf{x}}^+(0)\neq 0$, and $\mathbf{x}(t)\in\Sigma_{u_c}$ for all $t\in[0,\tau)$.
\end{The}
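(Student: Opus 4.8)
The plan is to lift everything to the semiconcave function $v$ of \eqref{v}, so that $\Sigma_v=\Sigma_{u_c}$ and $D^+v(x)=c+D^+u_c(x)$, and then to split the argument according to whether the regularity condition \eqref{reg_condition} holds at $x_0$. Two facts will be used throughout: because $v$ is a viscosity subsolution of $\frac12\langle A(x)Dv,Dv\rangle+V(x)=\alpha(c)$, one has $D^+v(x_0)\subset E_{\rho}(x_0)$ and $D^{\ast}v(x_0)\subset\partial E_{\rho}(x_0)$ with $\rho:=2(\alpha(c)-V(x_0))>0$ (positive by \eqref{energy_condition}; cf.\ the proof of Theorem \ref{equiv}), so that in particular $0\notin D^{\ast}v(x_0)$; and, by Proposition \ref{basic_facts_of_superdifferential}(d), $D^+v(x_0)=\mathrm{co}\,D^{\ast}v(x_0)$, which is not a singleton since $x_0\in\Sigma_v$.

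\emph{Case 1: $0\notin D^+v(x_0)$.} As $A(x_0)$ is positive definite, this is equivalent to $0\notin\mathrm{co}\,H_p(x_0,D^+v(x_0))=A(x_0)D^+v(x_0)$, i.e.\ condition \eqref{condition_for_propagation_singularities} holds. By Proposition \ref{criterion_on_gen_char} there is a unique generalized characteristic $\mathbf{x}:[0,\tau]\to\R^n$ with $\mathbf{x}(0)=x_0$, and it is singular: $\mathbf{x}(s)\in\Sigma_v$ for all $s$. By Proposition \ref{uniqueness_of_general_charact} one has $\dot{\mathbf{x}}^+(0)=A(x_0)p(0)$, where $p(0)$ is the least-energy element of $D^+v(x_0)$ in \eqref{minimality}; since $0\notin D^+v(x_0)$ we get $p(0)\neq 0$, whence $\dot{\mathbf{x}}^+(0)\neq 0$. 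This arc (which is in fact a generalized characteristic) already has all the required properties.

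\emph{Case 2: $0\in D^+v(x_0)$.} Then the constant arc $\mathbf{x}\equiv x_0$ solves the characteristic inclusion \eqref{generalized_characteristics}, so by the uniqueness in Proposition \ref{criterion_on_gen_char} it is the only generalized characteristic through $x_0$, and generalized characteristics are of no help; instead I would invoke the Albano--Cannarsa propagation principle for semiconcave functions (\cite{Albano-Cannarsa}, see also \cite{Cannarsa-Yu}). That principle produces a Lipschitz singular arc issuing from $x_0$ in a direction normal to a positive-dimensional exposed face of $D^+v(x_0)$, hence with nonzero initial right velocity, whenever such a face exists. Now a compact convex set with more than one point fails to possess a positive-dimensional exposed face only when it is a full-dimensional strictly convex body; and in that situation $\partial D^+v(x_0)=\mathrm{Ext}\,D^+v(x_0)\subset D^{\ast}v(x_0)$ by Proposition \ref{basic_facts_of_superdifferential}(d), so, using the inclusions recalled above, $\partial D^+v(x_0)\subset\partial E_{\rho}(x_0)$ while $D^+v(x_0)\subset E_{\rho}(x_0)$, and a radial-function comparison of these two convex bodies forces $D^+v(x_0)=E_{\rho}(x_0)$. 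Therefore, as long as $D^+v(x_0)\neq E_{\rho}(x_0)$, the Albano--Cannarsa principle delivers an arc $\mathbf{x}:[0,\tau)\to\R^n$ with $\mathbf{x}(0)=x_0$, $\dot{\mathbf{x}}^+(0)\neq 0$, and $\mathbf{x}(t)\in\Sigma_v$ for all $t$.

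I expect the main obstacle to be the residual degenerate configuration $D^+v(x_0)=E_{\rho}(x_0)$ --- equivalently $0\in\mathrm{int}\,D^+v(x_0)$ --- in which $x_0$ is a ``cone-point'' local maximum of $v$, the superdifferential has no flat face, and generalized characteristics collapse entirely. To handle it one would combine the Albano--Cannarsa construction with the topological propagation furnished by Proposition \ref{pr:wKAM}, namely that $\Sigma_{u_c}$ meets the boundary of every sufficiently small ball centred at $x_0$: apply the construction of Case 2 at singular points in a punctured neighbourhood of $x_0$ and then pass to a limit to obtain a singular arc emanating from $x_0$ itself, the delicate point being to extract from this selection/limiting procedure a genuinely Lipschitz arc with $\dot{\mathbf{x}}^+(0)\neq 0$. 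This last step is, I believe, the technical heart of the theorem.
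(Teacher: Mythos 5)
Your Case 1 is fine, and your reduction in Case 2 is correct and in fact matches the paper's: the paper observes that the Albano--Cannarsa criterion $\partial D^+v(x_0)\setminus D^{\ast}v(x_0)\neq\varnothing$ is, via $\mathrm{Ext}\,D^+v=D^{\ast}v\subset\partial E_{\rho}(x_0)$, equivalent to $D^{\ast}v(x_0)\neq\partial E_{\rho}(x_0)$, i.e.\ to $D^+v(x_0)\neq E_{\rho}(x_0)$ — precisely your dichotomy. (Incidentally, the paper's proof needs no Case 1 at all: the Albano--Cannarsa route covers both regimes once the degenerate configuration is excluded.)

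The genuine gap is exactly where you flagged it, and it is not a routine technicality you can defer. You treat $D^+v(x_0)=E_{\rho}(x_0)$ as a residual case that must be handled by some limiting construction over nearby singular points, and you leave that construction unspecified. That approach would be very hard to carry through: Proposition~\ref{pr:wKAM} only guarantees that $\Sigma_{u_c}$ meets the boundary of small balls, it gives no control of the superdifferential at those singular points (so you cannot ensure Albano--Cannarsa applies there either), and there is no mechanism that forces a family of singular arcs from nearby points to converge to a single Lipschitz arc through $x_0$ with a nonzero initial right derivative. The paper's key insight, which your proposal misses, is that the degenerate configuration \emph{cannot occur}: one proves $D^{\ast}v(x_0)\neq\partial E_{\rho}(x_0)$ directly by contradiction. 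Assuming $D^{\ast}v(x_0)=\partial E_{\rho}(x_0)$, one flows each $\theta\in\partial E_{\rho}(x_0)$ backward under the Hamiltonian flow (these are the calibrated curves issuing from $x_0$, along which $v$ is differentiable for $t<0$). Forward uniqueness of generalized characteristics (Proposition~\ref{uniqueness_of_general_charact}) makes $\theta\mapsto\mathbf{x}_\theta(t)$ injective, so its image $\Gamma_t$ is a topological $(n-1)$-sphere; Jordan--Brouwer gives a bounded component $\Omega_t$, singularity-free by Proposition~\ref{pr:wKAM}. From any $x_1\in\Omega_t$, the forward characteristic must leave $\Omega_t$ in finite time because $\langle c,\mathbf{x}_1(s)\rangle$ grows linearly (here the energy condition \eqref{energy_condition} and the periodicity/boundedness of $u_c$ are used), and uniqueness then identifies it with one of the backward trajectories $\mathbf{x}_\theta$. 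Running that trajectory further backward, it must cross $\Gamma_t$ again, and another application of forward uniqueness forces $\mathbf{x}_\theta$ to pass through $x_0$ at a negative time — contradicting that $v$ is differentiable there but $x_0\in\Sigma_v$. This argument, not a selection-and-limit scheme, is the actual technical heart of the theorem.
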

\begin{proof}
Let $u_c$ be a viscosity solution of \eqref{mech_sys_general} and $v$ be defined as \eqref{v}. Observe that $v$ is also locally semiconcave and has the same singular set as $u_c$. In view of the propagation principle of \cite{alca99}---which applies to any semiconcave function---the geometric condition
\begin{equation}\label{eq:c1}
\partial D^+v(x_0)\setminus D^{\ast}v(x_0)\neq\varnothing
\end{equation}
implies the conclusion of the theorem. Moreover, since
 \begin{equation*}
D^*v(x_0)\subset \partial E_\rho(x_0)
\end{equation*}
with $E_{\rho}(x_0)$ defined in \eqref{eq:ellipsoid} for $\rho=\sqrt{2(\alpha(c)-V(x_0))}>0$, we have that \eqref{eq:c1} is equivalent to
\begin{equation}\label{eq:c2}
D^*v(x_0)\neq \partial E_\rho(x_0)\,
\end{equation}
by Proposition \ref{Ext_and_reachable}. We shall therefore prove \eqref{eq:c2} .

Suppose $D^{\ast}v(x_0)= \partial E_\rho(x_0)$ and, for each $\theta\in\partial E_\rho(x_0)$, denote by $(\mathbf{x}_\theta,p_\theta)$ the solution on $(-\infty,0]$ of the Hamiltonian system
\begin{equation}\label{eq:hamiltonian}
\begin{cases}
\Dot{\mathbf{x}}=A(\mathbf{x})p\\
\Dot p=\frac 12D\langle A(\mathbf{x})p,p\rangle-DV(\mathbf{x})
\end{cases}
\quad\text{with initial conditions}\quad
\begin{cases}
\mathbf{x}(0)=x_0
\\
p(0)=\theta\,.
\end{cases}
\end{equation}
It is well-known  that $v$ is differentiable in a neighborhood of $\mathbf{x}_\theta(t)$ for all $t< 0$ and
\begin{equation}\label{eq:cpKAM}
Dv(\mathbf{x}_\theta (t))=p_\theta(t)\quad \forall t\in(-\infty,0)\,.
\end{equation}
Moreover, $\theta\mapsto (\mathbf{x}_\theta(t),p_\theta(t))$ is a continuous map for every $t\le 0$, and so is the map
\begin{equation*}
\gamma_t:\partial E_\rho(x_0)\to \R^n\,,\quad \gamma_t(\theta)=\mathbf{x}_\theta(t)\,.
\end{equation*}
We claim that $\gamma_t$ is one-to-one for every $t<0$. For let $\theta_0, \theta_1\in \partial B_\rho$ be such that $\gamma_t(\theta_0)=\gamma_t(\theta_1)$. Then, by the forward uniqueness of solutions to the differential inclusion $\Dot{\mathbf{x}}\in A(\mathbf{x})D^+v(\mathbf{x})$ (Proposition \ref{uniqueness_of_general_charact}), we conclude that $\mathbf{x}_{\theta_0}(s)=\mathbf{x}_{\theta_1}(s)$ for all $s\in [t,0]$. Thus, in view of \eqref{eq:cpKAM}, $p_{\theta_0}(s)=p_{\theta_1}(s)$ for all $s\in [t,0]$. So, $\theta_0=\theta_1$.

Since $\gamma_t$ is continuous and one-to-one, we have that 
$\Gamma_t:=\gamma_t(\partial E_\rho)$ is homeomorphic to $S^{n-1}$  for all $t<0$. So,  the Jordan-Brouwer theorem (see, e.g., \cite[Corollary~18.7]{gr67}) ensures that, for any $t< 0$, $\R^n\setminus \Gamma_t$ has two connected components both with boundary $\Gamma_t$, one of which---labeled $\Omega_t$---is bounded. Moreover, $v$ is of class $C^1$ in a neighborhood of  $\Gamma_t$. Since, by Proposition~\ref{pr:wKAM}, $\Omega_t\cap\Sigma_v=\varnothing$, we conclude that  $v$ is of class $C^1$ on an open neighborhood, say $U$, of $\overline{\Omega}_t$. In fact, a well-know regularization argument based on semiconcavity and semiconvexity shows that $v\in C^{1,1}_{\rm loc}(U)$.

Now, let $x_1\in\Omega_t$ and let $\mathbf{x}_1:[0,+\infty)\to\R^n$ be the solution of the problem
\begin{equation}\label{eq:xi1}
\begin{cases}
\Dot{\mathbf{x}}(s)\in A(\mathbf{x}(s))D^+u(\mathbf{x}(s))
&s\ge 0
\\
\mathbf{x}(0)=x_1.
\end{cases}
\end{equation}
Define
\begin{equation*}
\tau:=\inf\{s\ge 0~|~\mathbf{x}_1(s)\in \Gamma_t\}>0\,.
\end{equation*}
Since $\mathbf{x}_1(s)\in \Omega_t$ for all $s\in [0,\tau)$, we have that
 \begin{multline*}
\langle c,\mathbf{x}_{1}(s)-x_1\rangle+v(\mathbf{x}_{1} (s))-v(x_1)=\int_{0}^{s}\langle Du(\mathbf{x}_{1} (\sigma)), \dot{\mathbf{x}}(\sigma)\rangle\,d\sigma
 \\
=2\int_{0}^{s}\left[\alpha(c)-V(\mathbf{x}_{\theta} (\sigma))\right]\,d\sigma
\geqslant 2\mu s\,,\quad\forall s\in [0,\tau)\,,
 \end{multline*}
with $\mu=E-\max_{\T^n}V>0$. Therefore,
 \begin{equation}
 \label{eq:Pshift}
\langle c, \mathbf{x}_{1} (s)-x_1\rangle\geqslant 2 (\mu s -\|v\|_\infty)\,,\quad\forall s\in [0,\tau)\,.
\end{equation}
The above inequality  forces $\tau<+\infty$. So, $\mathbf{x}_1(\tau)\in \Gamma_t$. Equivalently, $\mathbf{x}_1(\tau)=\mathbf{x}_\theta(t)$ for some $\theta\in\partial E_\rho(x_0)$. Then, since \eqref{eq:xi1} admits a unique solution, we have that
\begin{equation}\label{eq:xi1=xi}
\mathbf{x}_1(s)=\mathbf{x}_\theta(t+s-\tau)
\end{equation}
for any $s\ge \tau$. Moreover, since $u\in C^{1,1}_{\rm loc}(U)$, we also have that \eqref{eq:xi1=xi} holds for all $s\ge 0$. In particular, $\mathbf{x}_\theta(t-\tau)=x_1$.

Finally, the same reasoning that led to \eqref{eq:Pshift} ensures that
 \begin{equation*}
\langle c, \mathbf{x}_{\theta}(t')\rangle\leqslant \langle c, x_1\rangle+2\|v\|_\infty - 2\mu (t-\tau-t')\,,
 \end{equation*}
 for every $t'<t-\tau$. Hence, for some $T<t-\tau$, $\mathbf{x}_{\theta} (T)$ must return to intersect  $\Gamma_t$, that is,
\begin{equation}\label{eq:contra}
\mathbf{x}_\theta(T)=\mathbf{x}_{\theta'}(t)\quad \text{for some}\quad \theta'\in\partial E_\rho(x_0)\,.
\end{equation}
We claim this  yields a contradiction. Indeed, \eqref{eq:contra} and again forward uniqueness  imply that
\begin{equation*}
\mathbf{x}_\theta(T+s-t)=\mathbf{x}_{\theta'}(s)\quad \text{for all}\quad t\le s\le 0\,.
\end{equation*}
 Hence, for $s=0$, we obtain  $\mathbf{x}_\theta(T-t)=x_0$.  Now, this is a contradiction, because $v$ is differentiable at $\mathbf{x}_\theta(T-t)$ whereas $x_0\in\Sigma_v$. We have reached such a contradiction supposing that $D^*v(x_0)= \partial E_\rho(x_0)$, so \eqref{eq:c2} must hold true.
\end{proof}

\begin{Lem}\label{not_constant}
Let $\mathbf{x}:[a,b]\to\R^n$ be a Lipschitz arc, and let $v$ be a locally semiconcave function. If $v\big(\mathbf{x}([a,b])\big)$ contains a nontrivial closed interval, then there exists $s_0\in [a,b]$ such that $0\not\in D^+v(\mathbf{x}(s_0))$.
\end{Lem}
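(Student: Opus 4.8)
The statement to prove is Lemma \ref{not_constant}: if a Lipschitz arc $\mathbf{x}:[a,b]\to\R^n$ has the property that $v(\mathbf{x}([a,b]))$ contains a nontrivial interval, then $0\notin D^+v(\mathbf{x}(s_0))$ for some $s_0$. The plan is to argue by contradiction. Suppose instead that $0\in D^+v(\mathbf{x}(s))$ for every $s\in[a,b]$. I will show that under this hypothesis the composition $g(s):=v(\mathbf{x}(s))$ is, in a suitable sense, non-increasing — or more precisely cannot increase on any subinterval — so its range cannot contain a nontrivial interval, which is the desired contradiction.

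First I would set up the monotonicity estimate. Since $v$ is locally semiconcave and $\mathbf{x}$ is Lipschitz, $g(s)=v(\mathbf{x}(s))$ is locally Lipschitz, hence differentiable a.e., and $g(b)-g(a)=\int_a^b g'(s)\,ds$ (and similarly on every subinterval). The key point is to control $g'(s)$ at a point of differentiability. At such an $s$, using the upper bound from semiconcavity (Proposition \ref{criterion-Du_semiconcave}) with the vector $0\in D^+v(\mathbf{x}(s))$, one gets for $h>0$
\begin{equation*}
v(\mathbf{x}(s+h))\leqslant v(\mathbf{x}(s))+\langle 0,\mathbf{x}(s+h)-\mathbf{x}(s)\rangle+\frac C2|\mathbf{x}(s+h)-\mathbf{x}(s)|^2\leqslant v(\mathbf{x}(s))+\frac{C\,\mathrm{Lip}(\mathbf{x})^2}{2}h^2,
\end{equation*}
so $\dfrac{g(s+h)-g(s)}{h}\leqslant \dfrac{C\,\mathrm{Lip}(\mathbf{x})^2}{2}h\to 0$ as $h\to 0^+$, giving $g'(s)\leqslant 0$ wherever $g$ is differentiable. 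Integrating, $g(s_2)\leqslant g(s_1)$ for all $a\leqslant s_1\leqslant s_2\leqslant b$, i.e. $g$ is non-increasing on $[a,b]$.

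Then I would conclude. A non-increasing continuous function on $[a,b]$ has range exactly $[g(b),g(a)]$; but that contradicts nothing by itself, so I need to be more careful: the hypothesis is that the range contains a nontrivial closed interval $[\mu_1,\mu_2]$, which is automatically true for any non-constant continuous function on a compact interval. So the naive version of the argument does not immediately close. The fix is to use the one-sided estimate more locally: the point is that $0\in D^+v$ holding on all of $[a,b]$ is too strong to be compatible with $g$ decreasing nontrivially either, since the semiconcavity bound centered at $0$ combined with the viscosity property should be examined — actually the cleanest route is different. I would instead localize: pick any value $E$ in the interior of the interval $v(\mathbf{x}([a,b]))$; the level set $\{g=E\}$ is a nonempty compact subset of $(a,b)$; pick $s_0$ in it, and observe that since $g$ takes values both above and below $E$ arbitrarily close to $s_0$ on at least one side (because $E$ is an interior value of the range, and using connectedness of $g([a,s_0])$ and $g([s_0,b])$), $g$ cannot be one-sidedly semiconcave-dominated by the constant $0$ there unless $0\notin D^+v(\mathbf{x}(s_0))$. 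More precisely: if $0\in D^+v(\mathbf{x}(s_0))$ then the estimate above gives $g(s)\leqslant g(s_0)+\tfrac{C'}{2}|s-s_0|^2$ for $s$ near $s_0$ on both sides, so $s_0$ is a strict-ish local max of $g$; choosing $s_0$ to be a point where $g$ attains a value that is NOT a local maximum value — which exists precisely because the range contains a nontrivial interval, hence uncountably many values, only countably many of which can be local-max values of a continuous function (local maxima of a continuous function on an interval take at most countably many distinct values) — yields the contradiction.

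**Main obstacle.** The delicate point is the last step: the semiconcavity/$0\in D^+v$ hypothesis only gives that each such $\mathbf{x}(s)$ is a local maximum of $g=v\circ\mathbf{x}$, and I must rule this out on a large set of parameters. The clean argument is the elementary fact that a continuous function on an interval can have only countably many distinct local-maximum values, whereas the range $v(\mathbf{x}([a,b]))$ containing a nontrivial interval forces uncountably many values — so some $s_0$ with $g(s_0)$ not a local-max value exists, and at that $s_0$ necessarily $0\notin D^+v(\mathbf{x}(s_0))$. Making that counting fact precise (each distinct local-max value is witnessed by a local max interval with rational endpoints, say) is the part that needs a careful, if short, argument; everything else is the routine semiconcavity estimate above.
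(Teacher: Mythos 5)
Your opening move is exactly the right one — the semiconcavity inequality of Proposition~\ref{criterion-Du_semiconcave} applied with $p=0\in D^+v(\mathbf{x}(s))$ — but you then discard most of its strength. That inequality,
\[
v(\mathbf{x}(s+h))\le v(\mathbf{x}(s))+\tfrac{C}{2}|\mathbf{x}(s+h)-\mathbf{x}(s)|^2\le v(\mathbf{x}(s))+Kh^2,
\]
holds for \emph{every} $h$ (positive or negative) and for \emph{every} $s$, since $0\in D^+v$ at every point of the arc. You only used $h>0$ and got $g'\le 0$. Using $h<0$ in the same line gives $\frac{g(s+h)-g(s)}{h}\ge Kh\to 0$ as $h\to 0^-$, hence $g'\ge 0$ as well. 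Equivalently, by symmetry one has $|g(s)-g(s')|\le K|s-s'|^2$ for all $s,s'$, so $g$ is differentiable everywhere with $g'\equiv 0$ and is therefore constant, contradicting the hypothesis outright. This is precisely the paper's one-line argument, and it makes the whole second half of your proposal unnecessary.

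The route you took instead contains a genuine error. You claim that $0\in D^+v(\mathbf{x}(s_0))$ — i.e.\ the bound $g(s)\le g(s_0)+\tfrac{C'}{2}(s-s_0)^2$ near $s_0$ — forces $s_0$ to be a local maximum of $g$. That implication is false: a function like $g(s)=(s-s_0)^3$ satisfies $g(s)-g(s_0)\le K(s-s_0)^2$ for $s$ near $s_0$ (the cubic increase is dominated by the quadratic), yet $s_0$ is not a local maximum. The semiconcavity bound only caps the \emph{rate} of increase through $s_0$; it does not prevent increase. Since the countability-of-local-max-values step rests entirely on that false implication, the argument does not close as written. The fix is not to patch the local-max argument but to abandon it in favor of the two-sided estimate described above, which is what the paper does.
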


\begin{proof}
Suppose  $0\in D^+v(\mathbf{x}(s))$ for all $s\in [a,b]$. Then, by the semiconcavity of $v$, 
$$
v(\mathbf{x}(s))-v(\mathbf{x}(s'))\leqslant\frac C2|\mathbf{x}(s')-\mathbf{x}(s)|^2\leqslant K|s-s'|^2\quad \forall\, s,s'\in [a,b],
$$
where $K\geqslant 0$ depends on the Lipschitz norm of  $\mathbf{x}$. It follows that the derivative of $v(\mathbf{x}(s))$ vanishes for all $s\in (a,b)$. This implies that $v(\mathbf{x}(s))$ is constant on $[a,b]$ since $v(\mathbf{x}(s))$ is Lipschitz continuous, which leads to a contradiction.
\end{proof}

\begin{Cor}\label{local_regular}
Let $v$, defined as in \eqref{v}, satisfy the energy condition \eqref{energy_condition}, and let $x\in\Sigma_v$ be an isolated local maximum point of $v$. Then, for any neighborhood $U$ of $x$, there exists $y\in\Sigma_v\cap U$ such that $0\not\in D^+v(y)$.
\end{Cor}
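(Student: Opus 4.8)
The plan is to combine Theorem~\ref{th:wKAM}, which already produces a Lipschitz singular arc $\mathbf{x}:[0,\tau)\to\R^n$ emanating from $x$ with $\dot{\mathbf{x}}^+(0)\neq 0$, with the observation that near an \emph{isolated} local maximum the values of $v$ along such an arc cannot stay constant. Concretely, fix a neighborhood $U$ of $x$ and, shrinking if necessary, assume that $x$ is the only local maximum point of $v$ in $U$. Apply Theorem~\ref{th:wKAM} to obtain the singular arc $\mathbf{x}$; by continuity we may assume (after restricting $\tau$) that $\mathbf{x}([0,\tau))\subset U$. Since $\dot{\mathbf{x}}^+(0)=A(x)p$ for the minimal-norm selection $p\in D^+v(x)$, and since $\dot{\mathbf{x}}^+(0)\neq 0$, we have $p\neq 0$, hence by Proposition~\ref{uniqueness_of_general_charact} together with the computation in the proof of Theorem~\ref{increasing}, the right derivative of $s\mapsto v(\mathbf{x}(s))$ at $s=0$ equals $\langle p,A(x)p\rangle>0$. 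Thus $v(\mathbf{x}(s))>v(x)$ for small $s>0$.

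Now the key point: I want to conclude that $v(\mathbf{x}(\cdot))$ is \emph{not} constant on any subinterval, and in fact takes a strictly larger value somewhere; this forces, via Lemma~\ref{not_constant}, the existence of $s_0$ with $0\notin D^+v(\mathbf{x}(s_0))$, and then $y:=\mathbf{x}(s_0)\in\Sigma_v\cap U$ is the desired point. The cleanest route: pick $s_1\in(0,\tau)$ with $v(\mathbf{x}(s_1))=:\beta>v(x)=:\alpha$ (possible by the previous paragraph). Then $v(\mathbf{x}([0,s_1]))$ is a connected subset of $\R$ containing both $\alpha$ and $\beta$, hence contains the nontrivial closed interval $[\alpha,\beta]$. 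Lemma~\ref{not_constant} applied to $\mathbf{x}|_{[0,s_1]}$ yields $s_0\in[0,s_1]$ with $0\notin D^+v(\mathbf{x}(s_0))$. Since $\mathbf{x}(s_0)\in\Sigma_v$ (the arc is singular) and $\mathbf{x}(s_0)\in U$, setting $y=\mathbf{x}(s_0)$ completes the proof.

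The main obstacle — really the only thing requiring care — is making sure the singular arc stays inside the prescribed neighborhood $U$ and that the strict increase $v(\mathbf{x}(s))>v(x)$ near $s=0$ is genuinely available rather than merely $v(\mathbf{x}(s))\geq v(x)$. The first is handled by continuity of $\mathbf{x}$ at $0$ (replace $\tau$ by a smaller value so that $\mathbf{x}([0,\tau))\subset U$). The second follows from the directional-derivative formula for semiconcave functions used in the proof of Theorem~\ref{increasing}: since $0\notin D^+v(x)$ would already give the conclusion directly (as then $x$ itself, or any nearby singular point produced by generalized characteristics, works), one reduces to the case $0\in D^+v(x)$; but then the minimal-norm selection $p$ satisfies $p\neq 0$ precisely because $D^{\ast}v(x)\subset\partial E_\rho(x)$ with $\rho=\sqrt{2(\alpha(c)-V(x))}>0$, so by Proposition~\ref{basic_facts_of_superdifferential}(d) every vector in $D^+v(x)$ of minimal $A(x)$-norm is nonzero, whence $\tfrac{d}{ds^+}v(\mathbf{x}(s))|_{s=0}=\langle p,A(x)p\rangle>0$ and strict increase follows by integrating the right derivative over a short interval, exactly as in Theorem~\ref{increasing}.
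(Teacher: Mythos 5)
Your strategy—invoke Theorem~\ref{th:wKAM} for a nonconstant singular arc, then apply Lemma~\ref{not_constant} after showing $v$ is nonconstant along it—is the same as the paper's, but the middle step is wrong in two compounding ways.

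First, the arc $\mathbf{x}$ furnished by Theorem~\ref{th:wKAM} is \emph{not} a generalized characteristic: it comes from the Albano--Cannarsa propagation principle \cite{alca99} applied to the geometric condition \eqref{eq:c1}, not from solving $\dot{\mathbf{x}}\in A(\mathbf{x})D^+v(\mathbf{x})$. So the identity $\dot{\mathbf{x}}^+(0)=A(x)p$ with $p$ the $A(x)$-norm-minimizing element of $D^+v(x)$, and the ensuing directional-derivative computation from Theorem~\ref{increasing}, simply do not apply to this arc. Indeed they \emph{cannot} apply here: since $x$ is a local maximum, $0\in D^+v(x)$, and the unique generalized characteristic from such a point is the constant arc (as the introduction points out), whereas $\dot{\mathbf{x}}^+(0)\neq 0$.

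Second, your claim that the $A(x)$-norm-minimizing element $p$ of $D^+v(x)$ is nonzero ``because $D^{\ast}v(x)\subset\partial E_\rho(x)$ with $\rho>0$'' misreads Proposition~\ref{basic_facts_of_superdifferential}(d): $D^+v(x)=\mathrm{co}\,D^{\ast}v(x)$, and the convex hull of a set lying on $\partial E_\rho(x)$ can perfectly well contain $0$ (for instance when $D^{\ast}v(x)$ contains antipodal points, precisely the local-maximum scenario of Remark~\ref{homoclinic_by_sing}). When $0\in D^+v(x)$ the minimal selection is $p=0$. Consequently your conclusion $v(\mathbf{x}(s))>v(x)$ for small $s>0$ is not merely unjustified but impossible: it would contradict $x$ being a local maximum.

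The correct (and simpler) bridge, as in the paper, runs in the opposite direction: $\dot{\mathbf{x}}^+(0)\neq 0$ forces $\mathbf{x}(s)\neq x$ for small $s>0$, and since $x$ is an \emph{isolated} local maximum this gives $v(\mathbf{x}(s))<v(x)$ for such $s$ once $\mathbf{x}([0,s])\subset U$. Then $v(\mathbf{x}([0,s]))$ contains the nontrivial interval $[v(\mathbf{x}(s)),v(x)]$ by continuity, and Lemma~\ref{not_constant} produces the desired $s_0$. Your final two steps are fine; the flaw is entirely in the claim of strict increase.
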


\begin{proof}
Let $x\in\Sigma_v$ be an isolated local maximum point of $v$. Then, by Theorem \ref{th:wKAM},  there exists a Lipschitz singular arc $\mathbf{x}:[0,\tau)\to \R^n$ such that $\mathbf{x}(0)=x$ and $\dot{\mathbf{x}}^+(0)\neq 0$. Therefore, 
in any neighborhood $U$ of $x$  one can find a point $\mathbf{x}(s)\in\Sigma_v$ with $0<s<\tau$ so that $v\big(\mathbf{x}(s)\big)<v(x)$. The conclusion follows by Lemma~\ref{not_constant}.
\end{proof}

\begin{Rem}
By Corollary \ref{local_regular}, if $x\in\Sigma_v$ is an isolated local maximum point of $v$, then  in any neighborhood $U$ of $x$ there is a point $y\in\Sigma_v$ satisfying the regularity property $0\not\in D^+v(y)$. Whether the generalized characteristic $\mathbf{x}(s)$ with  initial point $\mathbf{x}(0)=y$, given by Proposition \ref{criterion_on_gen_char},  reaches $x$ in finite or infinite time remains, however, an open problem.
\end{Rem}

\section{Applications to weak KAM theory}
In this section, we will use standard notions from weak KAM theory. The reader will find all  the necessary preliminaries  in Appendix A.

Let $c\in\R^n$ and let $h_c$ be Peierls' barrier (see Definition~\ref{de:peierls} below). The {\em barrier function} $B_c^{\ast}(x)$ is defined as
\begin{equation}\label{defn_barrier}
B_c^{\ast}(x)=\inf_{y,z\in\mathscr{M}_c}\{h_c(y,x)+h_c(x,z)-h_c(y,z)\},\quad x\in\T^n,
\end{equation}
where $\mathscr{M}_c$ is the projected Mather set, that is, the projection on $\T^n$ of the Mather set $\tilde{\mathscr{M}}_c$ by the graph property. Note that $\mathscr{M}_c\subset\mathscr{A}_c$  (see, e.g., \cite{Bernard2002}\cite{Mather91}\cite{Mather93}).
By Proposition~\ref{h_c_determin_viscosity_solution}, $h_c(x,\cdot)$ defines a global viscosity subsolution of \eqref{mech_sys_general}. Also,  $h_c(\cdot,x)$ defines a global critical supersolution. Fix $y,z\in\mathscr{M}_c$ and, for each $x\in\T^n$, let
$$
u_{c,y}^-(x)=h_c(y,x),\quad u_{c,z}^+(x)=-h_c(x,z).
$$
Then
\begin{equation}\label{B_c_star}
B^{\ast}_c(x)=\inf_{y,z\in\mathscr{M}_c}\{u_{c,y}^-(x)-u_{c,z}^+(x)-h_c(y,z)\}.
\end{equation}

For any $x,y\in\T^n$ define  {\em Mather's pseudometric} (see \cite{Mather93}) on $\mathscr{A}_c$ by
\begin{equation*}
d_c(x,y)=h_c(x,y)+h_c(y,x)\,.
\end{equation*}
Two points  $x,y\in\mathscr{A}_c$ are said to be in the same {\em Aubry class} if $d_c(x,y)=0$.

\begin{Lem}\label{uniqueness}
Let $x,y\in\mathscr{A}_c$ be distinct points in the same Aubry class. Then $h_c(x,\cdot)$ equals $h_c(y,\cdot)$ up to a constant. If $x,y\in\mathscr{A}_c$, $x\ne y$, belong to different Aubry classes, then $h_c(x,\cdot)-h_c(y,\cdot)$ is not constant.
\end{Lem}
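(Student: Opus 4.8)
The plan is to derive both assertions purely from the triangle inequality for Peierls' barrier together with the characterization of the projected Aubry set; no dynamics beyond these two facts is needed. Recall (from Appendix~A) that $h_c$ is real-valued and satisfies $h_c(x,z)\leqslant h_c(x,y)+h_c(y,z)$ for all $x,y,z\in\T^n$, and that a point $x$ lies in $\mathscr{A}_c$ precisely when $h_c(x,x)=0$.

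For the first assertion, suppose $x,y\in\mathscr{A}_c$ lie in the same Aubry class, so that $d_c(x,y)=h_c(x,y)+h_c(y,x)=0$; set $a:=h_c(x,y)$, whence $h_c(y,x)=-a$. Applying the triangle inequality with middle point $y$ gives $h_c(x,z)\leqslant a+h_c(y,z)$ for every $z\in\T^n$, i.e. $h_c(x,z)-h_c(y,z)\leqslant a$; applying it with middle point $x$ gives $h_c(y,z)\leqslant -a+h_c(x,z)$, i.e. $h_c(x,z)-h_c(y,z)\geqslant a$. Since $a$ does not depend on $z$, these two one-sided estimates force $h_c(x,z)-h_c(y,z)=a$ for all $z$, which is exactly the claim that $h_c(x,\cdot)$ and $h_c(y,\cdot)$ coincide up to the constant $a$.

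For the second assertion I would argue by contradiction. Assume $h_c(x,\cdot)-h_c(y,\cdot)\equiv b$ for some constant $b$. Evaluating at $z=x$ and using $h_c(x,x)=0$ (which holds because $x\in\mathscr{A}_c$) yields $h_c(y,x)=-b$; evaluating at $z=y$ and using $h_c(y,y)=0$ yields $h_c(x,y)=b$. Adding, $d_c(x,y)=h_c(x,y)+h_c(y,x)=0$, so $x$ and $y$ lie in the same Aubry class, contradicting the hypothesis.

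I do not anticipate any real obstacle: the argument is essentially a two-line manipulation of the triangle inequality and the identity $h_c(x,x)=0$ on $\mathscr{A}_c$. The only point deserving a moment's care is the first part, where one must check that the two opposite inequalities pin down the \emph{same} constant $a=h_c(x,y)$ on both sides of the difference $h_c(x,\cdot)-h_c(y,\cdot)$; this is immediate once one notices that $a$ is a fixed scalar independent of $z$.
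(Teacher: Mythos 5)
Your proof is correct. For the first assertion, the paper simply cites the identity $h_c(x,z)=h_c(x,y)+h_c(y,z)$ (valid whenever $d_c(x,y)=0$) from Mather's 1993 paper and applies it directly, whereas you rederive that identity from the triangle inequality $h_c(x,z)\leqslant h_c(x,y)+h_c(y,z)$ together with $h_c(x,y)+h_c(y,x)=0$ by squeezing $h_c(x,z)-h_c(y,z)$ between $a$ and $a$. This is a legitimate and more self-contained route; what the paper's citation buys is brevity, what yours buys is transparency, since the triangle inequality for $h_c$ is immediate from concatenating curves in the definition of $h^c_t$. For the second assertion your argument and the paper's are the same computation: the paper evaluates $f(z)=h_c(x,z)-h_c(y,z)$ at $z=x$ and $z=y$ directly and reads off $f(x)-f(y)=-d_c(x,y)\ne0$, while you phrase it as a contradiction; both use exactly $h_c(x,x)=h_c(y,y)=0$ and the definition of $d_c$. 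One small caveat: you should be explicit that the triangle inequality for Peierls' barrier, while standard, is not literally stated in the paper's Appendix A and needs either a one-line proof by concatenation or a reference.
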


\begin{proof}
First of all, let us recall  that
\begin{equation}\label{eq_9}
h_c(x,z)=h_c(x,y)+h_c(y,z),
\end{equation}
if either $d_c(x,y)=0$ or $d_c(y,z)=0$ (see \cite[p. 1370]{Mather93}). Let us define $u_c^x(\cdot)=h_c(x,\cdot)$ for $x\in\T^n$. Then $u_c^x$ equals $u_c^y$ up to a constant.

On the other hand, fix $x,y\in\T^n$ belonging to distinct Aubry classes, and set $f(z)=h_c(x,z)-h_c(y,z)$, $z\in\T^n$, then
\begin{align*}
f(x)-f(y)&=h_c(x,x)-h_c(y,x)-h_c(x,y)+h_c(y,y)\\
         &=-\big(h_c(y,x)+h_c(x,y)\big)=-d_c(x,y)\not=0\,.
\end{align*}
Thus $h_c(x,\cdot)-h_c(y,\cdot)\not\equiv C$ for any constant $C$.
\end{proof}

From \eqref{eq_9} it follows  that, if there exists a unique Aubry class, then we can represent the barrier function $B^{\ast}_c$ in the form
$$
B^{\ast}_c(x)=u_{c,y}^-(x)-u_{c,y}^+(x):=u_c^-(x)-u_c^+(x)\,,
$$
where $(u_c^-,u_c^+)$  is called a {\em conjugate pair} of weak KAM solutions.

A set $\mathcal{S}$ of Tonelli Lagrangians is said to be {\em generic} in the sense of Ma\~n\'e if there exists a residual set $\mathcal{O}\subset C^2(\T^n)$, and a Tonelli Lagrangian $L_0$, such that  each $L\in\mathcal{S}$ has the form
\begin{equation*}
L=L_0+V
\end{equation*}
 for some $V\in\mathcal{O}$. A similar notion can be given for a set of Tonelli Hamiltonians. Moreover, a well-known result by 
 Ma\~n\'e~\cite{Mane} ensures that, for any fixed $c\in\R^n$, there is a unique viscosity solution of the equation associated with a generic Hamiltonian.

It is well known that $u_c^-$ (resp. $u_c^+$) is a locally semiconcave (resp. semiconvex) function with linear modulus. Then the barrier function $B^{\ast}_c$ is also a locally seminconcave function with linear modulus, see, e.g., \cite[Proposition 2.1.5]{Cannarsa-Sinestrari}.

The following theorem describes the propagation of singularities of the barrier function.
\begin{The}\label{pro_barrier}
For any generic mechanical system as \eqref{mech_sys_general} with
$$
L(x,v)=L_0(x,v)-V(x)=\frac 12 \langle A^{-1}(x)v,v\rangle-V(x),
$$the singularities of the barrier function $B^{\ast}_c$ always propagate locally if the energy condition \eqref{energy_condition} is satisfied.
\end{The}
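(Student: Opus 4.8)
The plan is to reduce the barrier function to a multiple of a weak KAM solution and then invoke Theorem~\ref{th:wKAM}. Under the genericity hypothesis, Ma\~n\'e's theorem~\cite{Mane} provides a unique viscosity solution of \eqref{mech_sys_general} (up to additive constants), and by Lemma~\ref{uniqueness} this means the Aubry set $\mathscr{A}_c$ reduces to a single Aubry class --- were there two distinct classes, representatives $a_1,a_2$ would give weak KAM solutions $h_c(a_1,\cdot)$, $h_c(a_2,\cdot)$ differing by a nonconstant function, contradicting uniqueness. First I would use this to write $B^*_c$ in the conjugate-pair form recalled before \eqref{B_c_star}: fixing any $y\in\mathscr{M}_c$ and using the cocycle identity \eqref{eq_9} (valid for all base points in the single class), together with $h_c(y,y)=0$ for $y\in\mathscr{M}_c\subset\mathscr{A}_c$, one gets $B^*_c(x)=u^-_c(x)-u^+_c(x)=h_c(y,x)+h_c(x,y)$, where $u^-_c(x)=h_c(y,x)$ is a $\T^n$-periodic semiconcave viscosity solution of \eqref{mech_sys_general} and $u^+_c(x)=-h_c(x,y)$.

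Next I would exploit that the mechanical Hamiltonian $H(x,p)=\tfrac12\langle A(x)p,p\rangle+V(x)$ is even in $p$, equivalently that $L(x,v)=\tfrac12\langle A^{-1}(x)v,v\rangle-V(x)$ is reversible, $L(x,-v)=L(x,v)$. Reversing the competing curves in the definition of Peierls' barrier then yields the symmetry $h_c(x,y)=h_c(y,x)$ for all $x,y\in\T^n$, so that $B^*_c=2u^-_c$. (Alternatively: $-u^+_c=h_c(\cdot,y)$ is a backward weak KAM solution for the time-reversed Lagrangian $\check L=L$, hence itself a viscosity solution of \eqref{mech_sys_general}, and the uniqueness above forces $-u^+_c=u^-_c+\mathrm{const}$, so $B^*_c=2u^-_c+\mathrm{const}$.) In either case $D^+B^*_c(x)=2\,D^+u^-_c(x)$ for every $x$, hence $\Sigma_{B^*_c}=\Sigma_{u^-_c}$, and every nonconstant Lipschitz singular arc of $u^-_c$ is a singular arc of $B^*_c$.

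Finally, since $u^-_c$ is a $\T^n$-periodic semiconcave solution of \eqref{mech_sys_general} satisfying the energy condition \eqref{energy_condition}, I would apply Theorem~\ref{th:wKAM} directly: for each $x_0\in\Sigma_{u^-_c}$ it produces a Lipschitz arc $\mathbf{x}:[0,\tau)\to\R^n$ with $\mathbf{x}(0)=x_0$, $\dot{\mathbf{x}}^+(0)\neq 0$ and $\mathbf{x}(t)\in\Sigma_{u^-_c}$ for all $t\in[0,\tau)$. Since $\Sigma_{B^*_c}=\Sigma_{u^-_c}$, this same arc is a nontrivial singular arc for $B^*_c$, which is exactly the local propagation asserted in Theorem~\ref{pro_barrier}.

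The one genuinely new ingredient --- and the point where genericity is essential --- is the reduction of $B^*_c$ to a single multiple of $u^-_c$; everything afterwards is a straight appeal to Theorem~\ref{th:wKAM}. The main obstacle, if one tried to dispense with genericity, is that $B^*_c$ would then be merely an infimum of differences of sub/supersolutions of \eqref{mech_sys_general} and need not solve any single Hamilton--Jacobi equation, so one would have to verify the abstract propagation condition $\partial D^+B^*_c(x_0)\setminus D^{\ast}B^*_c(x_0)\neq\varnothing$ of \cite{alca99} (cf.\ \eqref{eq:c1}) for $B^*_c$ by hand --- a considerably less transparent task, which is presumably why the statement is confined to generic mechanical systems.
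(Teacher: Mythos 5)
Your reduction of $B^*_c$ to a scalar multiple of $u^-_c$ rests on the symmetry $h_c(x,y)=h_c(y,x)$, and this is false for $c\neq 0$. While the mechanical Lagrangian $L(x,v)=\tfrac12\langle A^{-1}(x)v,v\rangle-V(x)$ is reversible, the Lagrangian entering Peierls' barrier is $L_c=L-\langle c,\cdot\rangle$, and
\begin{equation*}
L_c(x,-v)=L(x,v)+\langle c,v\rangle = L_{-c}(x,v)\neq L_c(x,v)\qquad\text{for }c\neq0.
\end{equation*}
Reversing competing curves therefore relates $h_c(x,y)$ to $h_{-c}(y,x)$, not to $h_c(y,x)$; the drift term accumulates a winding contribution $-2\langle c,\,\text{displacement}\rangle$ that does not cancel. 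Under the energy condition \eqref{energy_condition} we have $\alpha(c)>\max V=\alpha(0)$, so $c=0$ is excluded --- exactly the regime where your identity fails. Your alternative route has the same flaw: $-u^+_c=h_c(\cdot,y)$ is a backward weak KAM solution for the time-reversed Lagrangian, which for reversible $L$ is $L_{-c}$, so it is a viscosity solution of $H(x,-c+Dw)=\alpha(c)$, \emph{not} of \eqref{mech_sys_general}. Ma\~n\'e's uniqueness theorem applies to a fixed cohomology class and therefore cannot identify $-u^+_c$ with $u^-_c$ up to a constant. In short, $B^*_c=2u^-_c+\mathrm{const}$ and the consequent identification $\Sigma_{B^*_c}=\Sigma_{u^-_c}$ are not justified, and the appeal to Theorem~\ref{th:wKAM} collapses.

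The paper avoids this by not identifying the two solutions at all. It keeps the decomposition $B^*_c=u^-_c-u^+_c$ asymmetric, observes that $\Sigma_{B^*_c}\subset\Sigma_{u^-_c}\cup\Sigma_{u^+_c}$ together with the sum rule $D^+B^*_c(x)=D^+u^-_c(x)-D^-u^+_c(x)$, and then runs the argument of Theorem~\ref{th:wKAM} on \emph{whichever} of $u^-_c$, $u^+_c$ is singular at $x$ to obtain $\partial D^+u^-_c(x)\setminus D^*u^-_c(x)\neq\varnothing$ or $\partial D^-u^+_c(x)\setminus D^*u^+_c(x)\neq\varnothing$; the sum rule then transfers the propagation criterion \eqref{eq:c1} of \cite{alca99} to $B^*_c$ itself. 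You already flagged this as the ``less transparent'' fallback for the non-generic case, but in fact it is unavoidable even generically --- genericity is used only to collapse the infimum in \eqref{defn_barrier} to a single conjugate pair, not to turn $B^*_c$ into twice a weak KAM solution.
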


\begin{proof}
Recall that, in generic case,
$$
B^{\ast}_c(x)=u_c^-(x)-u_c^+(x).
$$
Let $x$ be a singular point of $B^{\ast}_c$. Then, we have that
\begin{equation}\label{superdiff_of_barrier}
x\in\Sigma_{u_c^-}\cup\Sigma_{u_c^+}\quad\text{and}\quad D^+B^{\ast}_c(x)=D^+u^-_c(x)-D^-u^+_c(x)\,.
\end{equation}
Indeed, if $x\not\in\Sigma_{u_c^-}\cup\Sigma_{u_c^+}$, then both $u_c^-$ and $u_c^+$ are differentiable at $x$, so $x$ is not a singular point of $B^{\ast}_c$. Moreover, in view of \eqref{eq:SCC2}, the representation of  $D^+B^{\ast}_c(x)$ in \eqref{superdiff_of_barrier} follows by the sum rule for the superdifferential of concave functions.

If the energy condition \eqref{energy_condition} is satisfied, then we have
$$
\partial D^+u^-_c(x)\setminus D^{\ast}u^-_c(x)\not=\varnothing\quad\text{or}\quad \partial D^-u^+_c(x)\setminus D^{\ast}u^+_c(x)\not=\varnothing
$$
by the same argument we used in the proof of Theorem \ref{th:wKAM}. This implies that the singularity of $u^-_c$ or $u^+_c$ must propagate locally. Our conclusion follows by \eqref{superdiff_of_barrier}.
\end{proof}

The following result is motivated by Remark \ref{homoclinic_by_sing}.

\begin{The}\label{homoclinic_orbit}
Let $x\in\Sigma_{B^{\ast}_c}$, and let $B^{\ast}_c(x)=u^-_c(x)-u^+_c(x)$ where $(u^-_c,u^+_c)$ is a conjugate pair of weak KAM solutions in the generic case. Then $x$ produces a homoclinic orbit with respect to the Aubry set $\tilde{\mathscr{A}}_c$ if
\begin{equation}\label{nonempty}
D^{\ast}{u^-_c}(x)\cap D^{\ast}{u^+_c}(x)\not=\varnothing.
\end{equation}
If $x$ is a local minimum point of $B^{\ast}_c$, then $x$  also produces such a homoclinic orbit.
\end{The}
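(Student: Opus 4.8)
The plan is to extract, from the singularity assumption and \eqref{nonempty}, a vector $p$ that is simultaneously a reachable gradient of $u^-_c$ at $x$ and of $u^+_c$ at $x$, and then use the calibration properties of the two weak KAM solutions to build a bi-infinite extremal curve whose $\alpha$- and $\omega$-limits lie in $\tilde{\mathscr A}_c$. Concretely, pick $p\in D^{\ast}u^-_c(x)\cap D^{\ast}u^+_c(x)$. By the standard facts recalled in Appendix~A (the analogue of Proposition~\ref{reachable_grad_and_backward} for $u^-_c$, and its time-reversed counterpart for $u^+_c$, which is semiconvex and calibrated on $[0,+\infty)$), the reachable gradient $p$ of $u^-_c$ is realized by a backward calibrated curve $\gamma^-:(-\infty,0]\to\T^n$ with $\gamma^-(0)=x$ and $p=\partial_q L_c(x,\dot\gamma^-(0))$, i.e. $\dot\gamma^-(0)=A(x)p$; likewise $p\in D^{\ast}u^+_c(x)$ gives a forward calibrated curve $\gamma^+:[0,+\infty)\to\T^n$ with $\gamma^+(0)=x$ and $\dot\gamma^+(0)=A(x)p$. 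Since both curves solve the same Euler--Lagrange (equivalently, Hamiltonian) flow \eqref{eq:hamiltonian} with the same initial position $x$ and the same initial velocity $A(x)p$, they glue into a single $C^1$ extremal $\gamma:\R\to\T^n$, exactly as in Remark~\ref{homoclinic_by_sing}.

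The next step is to identify the limit sets. The curve $\gamma\!\restriction_{(-\infty,0]}$ is $(u^-_c,L_c,\alpha(c))$-calibrated, so its $\alpha$-limit is contained in the projected Aubry set; lifting to $T\T^n$, $(\gamma,\dot\gamma)$ has its $\alpha$-limit in $\tilde{\mathscr A}_c$ (see \cite{Fathi-book}, \cite{Bernard2002}). Symmetrically, $\gamma\!\restriction_{[0,+\infty)}$ being calibrated for $u^+_c$ (in the time-reversed sense) forces the $\omega$-limit of $(\gamma,\dot\gamma)$ into $\tilde{\mathscr A}_c$. Thus $(\gamma,\dot\gamma)$ is a homoclinic orbit with respect to $\tilde{\mathscr A}_c$. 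It remains only to rule out the degenerate case in which $\gamma$ is itself contained in $\tilde{\mathscr A}_c$: this is where $x\in\Sigma_{B^{\ast}_c}$ is used. If $\gamma$ lay in the Aubry set, then $x$ would belong to the projected Aubry set, and both $u^-_c$ and $u^+_c$ would be differentiable at $x$ (weak KAM solutions are $C^1$ on the projected Aubry set, equivalently on $\mathcal I(u_c)$, cf. the parenthetical remark in Remark~\ref{homoclinic_by_sing}); but then $B^{\ast}_c=u^-_c-u^+_c$ is differentiable at $x$, contradicting $x\in\Sigma_{B^{\ast}_c}$. Hence the orbit is genuinely homoclinic and not a piece of $\tilde{\mathscr A}_c$.

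For the second assertion, assume $x$ is a local minimum of $B^{\ast}_c$. Then $0\in D^-B^{\ast}_c(x)$, and combined with $x\in\Sigma_{B^{\ast}_c}$ (so $D^+B^{\ast}_c(x)$ is nontrivial) the subdifferential is also nonempty, forcing $0\in D^+B^{\ast}_c(x)$ as well; in particular $B^{\ast}_c$ is differentiable-from-both-sides at $x$ with $DB^{\ast}_c(x)=0$ in the appropriate generalized sense. Using the sum-rule representation $D^+B^{\ast}_c(x)=D^+u^-_c(x)-D^-u^+_c(x)$ from \eqref{superdiff_of_barrier} together with the semiconcavity of $u^-_c$ and semiconvexity of $u^+_c$, the relation $0\in D^+B^{\ast}_c(x)$ yields $D^+u^-_c(x)\cap D^-u^+_c(x)\ne\varnothing$; picking $p$ in this intersection, $p\in D^+u^-_c(x)$ while $p\in D^-u^+_c(x)\subset D^+u^+_c(x)$ forces (by the viscosity sub/supersolution inequalities and strict convexity of $H$) $p\in D^{\ast}u^-_c(x)\cap D^{\ast}u^+_c(x)$, so \eqref{nonempty} holds and the first part applies. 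The main obstacle I anticipate is the careful bookkeeping of time orientations: $u^+_c$ is a weak KAM solution of the "reversed" type, calibrated forward in time, and one must make sure the reachable-gradient/calibrated-curve correspondence (Proposition~\ref{reachable_grad_and_backward} and its forward analogue) is applied with the correct sign of the velocity so that $\dot\gamma^-(0)=\dot\gamma^+(0)=A(x)p$ and the two half-curves genuinely match to first order, giving a $C^1$ — indeed $C^2$ — extremal rather than merely a broken one.
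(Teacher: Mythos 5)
Your first part follows the paper's own approach almost verbatim: pick $p$ in the common reachable-gradient set, produce a backward calibrated curve for $u^-_c$ and a forward one for $u^+_c$ via Proposition~\ref{reachable_grad_and_backward}, glue them into a bi-infinite extremal, and use calibration to place the $\alpha$- and $\omega$-limits in $\tilde{\mathscr A}_c$. One small slip: the momentum along the shifted equation is $c+p$, not $p$, so the matching velocity is $\dot\gamma^\pm(0)=A(x)(c+p)$ rather than $A(x)p$; this does not affect the gluing, since both half-curves carry the same initial velocity. Your extra step of ruling out the degenerate case $\gamma\subset\tilde{\mathscr A}_c$ (via differentiability of weak KAM solutions on the projected Aubry set) is a sound addition that the paper leaves implicit.

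Your argument for the local-minimum case, however, has a genuine gap. First, the clause ``combined with $x\in\Sigma_{B^{\ast}_c}$'' is self-contradictory: at any local minimum of a semiconcave function, $D^-B^{\ast}_c(x)\ni 0$ while $D^+B^{\ast}_c(x)\ne\varnothing$, which forces $B^{\ast}_c$ to be \emph{differentiable} at $x$, hence $x\notin\Sigma_{B^{\ast}_c}$. More importantly, the chain $p\in D^-u^+_c(x)\subset D^+u^+_c(x)$ is false in general (for a semiconvex function, $D^+$ is \emph{empty} at singular points), and ``viscosity sub/supersolution inequalities and strict convexity'' do not by themselves upgrade $p\in D^+u^-_c(x)$ to $p\in D^{\ast}u^-_c(x)$: by Proposition~\ref{Ext_and_reachable}, $D^{\ast}u^-_c(x)=\mathrm{Ext}\,D^+u^-_c(x)$, so you would need $p$ to be an \emph{extreme} point, which you have not established. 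The paper's argument is sharper at exactly this point: since $B^{\ast}_c$ is differentiable at $x$ with $D^+B^{\ast}_c(x)=\{0\}$, the Minkowski-difference identity $D^+u^-_c(x)-D^-u^+_c(x)=\{0\}$ forces each of $D^+u^-_c(x)$ and $D^-u^+_c(x)$ to be a singleton (if $a_1-b=0=a_2-b$ then $a_1=a_2$), so both $u^-_c$ and $u^+_c$ are differentiable at $x$ with $Du^-_c(x)=Du^+_c(x)$; the common gradient is then trivially in $D^{\ast}u^-_c(x)\cap D^{\ast}u^+_c(x)$. You should replace the ``sub/supersolution + strict convexity'' step with this singleton argument.
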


\begin{proof}
Let $x\in\Sigma_{B^{\ast}_c}$, and let $p\in\ D^{\ast}{u^-_c}(x)\cap D^{\ast}{u^+_c}(x)$. Then, $|c+p|\not=0$ since $c+p\in\partial E_{\rho}(x)$ with $\rho=\sqrt{2(\alpha(c)-V(x))}>0$, where $E_{\rho}(x)$ is the ellipsoid defined in \eqref{eq:ellipsoid}. Since such a vector $p$ is a reachable gradient of $u^{\pm}_c(x)$, it follows that there exists a $C^1$ arc $\gamma_1:(-\infty,0]\to\R^n$ (resp. $\gamma_2:[0,+\infty)\to\R^n$), with $\gamma_1(0)=x$ (resp. $\gamma_2(0)=x$), such that $\gamma_1$ is backward $(u^-_c,L_c,\alpha(c))$-calibrated (resp. $\gamma_2$ is forward $(u^+_c,L_c,\alpha(c))$-calibrated) by Proposition \ref{reachable_grad_and_backward}. Moreover, 
\begin{equation*}
c+p=\frac{\partial L}{\partial q}(x,\dot{\gamma}_i(0))=A(x)\dot{\gamma}_i(0)\,,\quad i=1,2\,.
\end{equation*}
Now, define
$$
\gamma(t)=\left\{
         \begin{array}{ll}
           \gamma_1(t), & \hbox{$t\leqslant0$;} \\
           \gamma_2(t), & \hbox{$t>0$.}
         \end{array}
       \right.
$$
Then, $(\gamma,\dot{\gamma})$ is a $C^1$ extremal arc, and produces a homoclinic orbit with respect to the Aubry set $\tilde{\mathscr{A}}_c$.

Finally, if $x$ is a local minimum point of $B^{\ast}_c$, then $B^{\ast}_c$ is differentiable at $x$ with $D^+B^{\ast}_c(x)=\{0\}$. Thus,  the sum rule for the superdifferential of concave functions yields that both $u^-_c$ and $u^+_c$ are differentiable at $x$, and $Du^-_c(x)=Du^+_c(x)$. So, condition \eqref{nonempty} is satisfied and,  by the first part of the conclusion, $x$ produces a homoclinic orbit with respect to the Aubry set.
\end{proof}

\begin{Rem}
The assumptions of Theorem \ref{homoclinic_orbit} are satisfied by the mathematical pendulum system, as discussed in Remark~\ref{homoclinic_by_sing} where condition \eqref{nonempty} always holds for $c\in (c^-,c^+)$. It is interesting to compare this analysis with the technique that uses the set $\mathcal{I}(u^-,u^+)$ in classical weak KAM theory (see, e.g., \cite{Fathi-book}). On the other hand, how to guarantee that condition \eqref{nonempty} holds remains  an open problem.
\end{Rem}

It is well known that complex  phenomena of Hamiltonian dynamics occur when the unstable and stable manifolds of hyperbolic periodic orbits intersect transversally. From the variational viewpoint, this is closely related to the regularity  of the barrier function, as well as the structure of its singular set and conjugate loci. 

\appendix
\section{Semiconcavity and weak KAM theory}
We being this appendix by briefly surveying some basic notions from weak KAM theory. We refer the reader to Fathi's unpublished book \cite{Fathi-book} for more details.

\subsection{A brief review of weak KAM solutions}

Let $H$ be a Tonelli Hamiltonian on $n$-torus $\T^n$, and $L$ be the corresponding Tonelli Lagrangian. For any fixed vector $c\in\R^n$ let us consider the Hamilton-Jacobi equation
\begin{equation}\label{HJE_weak_KAM}
H\big(x,c+Du_c(x)\big)=E:=\alpha(c)\,,\quad x\in\T^n\,,
\end{equation}
where the energy $E$ is assumed to satisfy the energy condition \eqref{energy_condition}.

\begin{defn}
Let $L$ be a $C^2$ Tonelli Lagrangian on $T\T^n$, and set $L_c=L-c$ and $E=\alpha(c)$ for any $c\in\R^n$. A function $u_c:\T^n\to\R$ is said to be {\em dominated} by $L_c+E$ iff for each absolutely continuous arc $\gamma:[a,b]\to\T^n$ with $a<b$, one has
$$
u_c(\gamma(b))-u_c(\gamma(a))\leqslant\int^b_aL_c(\gamma(s),\dot{\gamma}(s))ds+E(b-a).
$$
When this happens, one writes $u_c\prec L_c+E$.
\end{defn}

\begin{defn}
Let $c\in\R^n$, and $u_c$ be a real-valued function on $\T^n$. A absolutely continuous curve $\gamma:[a,b]\to\T^n$ is said to be $(u_c,L_c,E)$-{\em calibrated} if
$$
u_c(\gamma(b))-u_c(\gamma(a))=\int^b_aL_c(\gamma(s),\dot{\gamma}(s))ds+E(b-a).
$$
\end{defn}

The following facts clarifying the relation of the viscosity solutions and weak KAM solutions are well known (see, e.g. \cite{Fathi-book} \cite{Fathi-Siconolfi2004}).

\begin{Pro}
Let $c\in\R^n$. A function $u_c:\T^n\to\R$ is dominated by $L_c+E$ if and only if $u_c$ is a viscosity subsolution of \eqref{HJE_weak_KAM}. 

If $u_c$ is a viscosity subsolution of \eqref{HJE_weak_KAM}, then there exists an absolutely continuous arc $\gamma_x:(-\infty,0]\to \T^n$ with $\gamma_x(0)=x$ such that $\gamma_x$ is $(u_c,L_c,E)$-calibrated.
\end{Pro}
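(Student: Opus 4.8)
The statement consists of two classical facts: the equivalence ``$u_c$ dominated $\iff$ $u_c$ viscosity subsolution'', and the existence of a backward calibrated arc through every point. I would treat them separately, and throughout set $H_c(x,p):=H(x,c+p)$, so that $L_c(x,\cdot)$ and $H_c(x,\cdot)$ are Legendre duals and $L_c$ is an autonomous Tonelli Lagrangian. For the implication \emph{domination $\Rightarrow$ subsolution}, I would use the test-function description of the superdifferential. Let $x_0\in\T^n$ and $\phi\in C^1$ with $u_c-\phi$ attaining a local maximum at $x_0$. Fix $q\in\R^n$; for small $t>0$ feed the short arc $s\mapsto x_0+sq$, $s\in[-t,0]$, into the domination inequality and use the local-maximum property to replace $u_c$ by $\phi$, obtaining
\begin{equation*}
\phi(x_0)-\phi(x_0-tq)\leqslant\int_{-t}^0 L_c(x_0+sq,q)\,ds+E\,t .
\end{equation*}
Dividing by $t$ and letting $t\to0^+$ gives $\langle D\phi(x_0),q\rangle\leqslant L_c(x_0,q)+E$; taking the supremum over $q$ and using Legendre duality yields $H(x_0,c+D\phi(x_0))\leqslant E$, which is the viscosity subsolution inequality.

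For the converse, \emph{subsolution $\Rightarrow$ domination}, I would first note that $u_c$ is Lipschitz: the inequality $H(x,c+p)\leqslant E$ for $p\in D^+u_c(x)$, combined with the uniform positive definiteness of $A$ on the compact torus, confines $D^+u_c(x)$ to a fixed ball, and a continuous function with uniformly bounded superdifferential is Lipschitz. Hence $u_c$ is differentiable a.e.\ with $H_c(x,Du_c(x))\leqslant E$ a.e. I would then mollify, setting $u_\varepsilon:=u_c\ast\rho_\varepsilon$: by the convexity of $p\mapsto H_c(x,p)$ (Jensen) and the uniform continuity of $H_c$ on the relevant compact set, $H_c(x,Du_\varepsilon(x))\leqslant E+\omega(\varepsilon)$ with $\omega(\varepsilon)\to0$. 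For any absolutely continuous $\gamma:[a,b]\to\T^n$, Fenchel's inequality gives
\begin{equation*}
u_\varepsilon(\gamma(b))-u_\varepsilon(\gamma(a))=\int_a^b\langle Du_\varepsilon(\gamma(s)),\dot\gamma(s)\rangle\,ds\leqslant\int_a^b\big[H_c(\gamma(s),Du_\varepsilon(\gamma(s)))+L_c(\gamma(s),\dot\gamma(s))\big]\,ds ,
\end{equation*}
and bounding the Hamiltonian term by $E+\omega(\varepsilon)$ and letting $\varepsilon\to0$ (so that $u_\varepsilon\to u_c$ uniformly) yields $u_c\prec L_c+E$.

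For the \emph{backward calibrated arc}, fix $x\in\T^n$. For each $t>0$, by Tonelli's existence theorem there is a $C^2$ minimizer $\gamma_t:[-t,0]\to\T^n$ of $\sigma\mapsto u_c(\sigma(-t))+\int_{-t}^0 L_c(\sigma,\dot\sigma)$ among absolutely continuous arcs with $\sigma(0)=x$; the standard a priori estimate—comparison with curves of geodesic speed together with conservation of the energy of the autonomous Lagrangian $L_c$ along extremals—makes the family $\{\gamma_t\}$ equi-Lipschitz. By Arzel\`a--Ascoli, along a sequence $t_k\to\infty$ the arcs $\gamma_{t_k}$ converge, uniformly on compact subsets, to a Lipschitz arc $\gamma_x:(-\infty,0]\to\T^n$ with $\gamma_x(0)=x$. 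The crucial point is that each $\gamma_{t_k}$ is $(u_c,L_c,E)$-calibrated on every subinterval $[a,b]\subset[-t_k,0]$: the restriction of a minimizer is a minimizer, and using that $u_c$ coincides with its Lax--Oleinik image $\widehat T^-_s u_c$ (which is available because $E=\alpha(c)$ is the critical value, equivalently because $u_c$ is a weak KAM solution) one checks that the ``$\leqslant$'' of domination and the ``$\geqslant$'' obtained by splicing collapse to an equality on $[a,b]$. Passing to the limit $k\to\infty$ and combining continuity of $u_c$, lower semicontinuity of $\sigma\mapsto\int_a^b L_c(\sigma,\dot\sigma)$ under uniform/weak convergence, and domination for the reverse inequality, shows that $\gamma_x$ is calibrated on every $[a,b]$, hence is the required arc.

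The genuinely delicate step is the vanishing of the calibration defect of the $\gamma_t$ in the last part: this is precisely where one uses that $E=\alpha(c)$ is critical, so that the Lax--Oleinik orbit of $u_c$ is bounded and $u_c$ is a fixed point $\widehat T^-_su_c=u_c$ (for a merely dominated function that is not a solution one would only get a limit arc that is ``almost'' calibrated, with a residual defect controlled by $\widehat T^-_t u_c(x)-u_c(x)$; to recover the statement verbatim one restricts to points where this quantity vanishes). The remaining ingredients—the a priori Lipschitz bound for Tonelli minimizers, Tonelli's existence and lower semicontinuity theorems, and the equivalence of the two formulations of the viscosity subsolution property on $\T^n$—are standard and can be quoted from the references in the excerpt.
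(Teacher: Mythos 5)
The paper does not prove this proposition; it is quoted from \cite{Fathi-book} and \cite{Fathi-Siconolfi2004}, so there is no ``paper's own proof'' to compare against. Your argument for the equivalence ``dominated $\iff$ viscosity subsolution'' is the standard one and is essentially correct: test functions plus Legendre duality for one direction, Lipschitz regularity, mollification and Fenchel's inequality for the other. One small imprecision: ``a continuous function with uniformly bounded superdifferential is Lipschitz'' is not literally true as stated (a continuous function may have empty superdifferential on a large set); the correct mechanism is the coercivity of $H$ in $p$, which forces a viscosity subsolution to be Lipschitz by the usual cone-comparison argument. This is a routine fix and does not affect the structure of your proof.

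More importantly, you have correctly identified a genuine flaw in the \emph{statement} itself, and I would push this further than you do. The second assertion is false for a general viscosity subsolution: if $H(x,p)=\tfrac12|p|^2+V(x)$ with $\max V=0$, $V\not\equiv 0$, $c=0$ and $E=\alpha(0)=0$, then $u_c\equiv 0$ is a strict subsolution, and $(u_c,L_c,E)$-calibration would force $\int_a^b\big(\tfrac12|\dot\gamma|^2-V(\gamma)\big)\,ds=0$, hence $\dot\gamma\equiv 0$ and $V(\gamma)\equiv 0$; so no calibrated arc exists through any point where $V<0$. The statement should read ``viscosity \emph{solution},'' in agreement with the companion Propositions \ref{h_c_determin_viscosity_solution} (for $h_c$, a solution, the calibrated arc is on $(-\infty,0]$) and the next one (for Ma\~n\'e's potential $\phi_c$, merely a subsolution, calibration is only guaranteed on a short interval). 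With that correction, your proof of existence of the backward calibrated arc---Tonelli minimizers on $[-t,0]$, a priori Lipschitz bound from energy conservation, Arzel\`a--Ascoli, and passage to the limit using that $u_c$ is a fixed point of the (shifted) Lax--Oleinik semigroup---is the standard argument found in \cite{Fathi-book}, and the observation that the calibration defect is exactly $\widehat T^-_t u_c(x)-u_c(x)$ is precisely where the solution hypothesis enters.
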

A viscosity subsolution  (resp. solution) of \eqref{HJE_weak_KAM} is also called a {\em critical} subsolution (resp. solution).

For $t>0$, $x,y\in\T^n$ and $c\in\R^n$, we introduce the following
quantity
\begin{equation}
h^c_t(x,y)=\inf\int^t_0L_c(\gamma(s),\dot{\gamma}(s))\ ds,
\end{equation}
where $\gamma$ belongs to the set $C_{x,y}(t)$ of all absolutely continuous arcs $\gamma:[0,t]\to\T^n$ such that
$\gamma(0)=x$ and $\gamma(t)=y$.

\begin{defn}\label{de:peierls} Let $c\in\R^n$ and let $h^c_t(x,y)$ be defined as above. {\em Ma\~n\'e's critical potential}
and {\em Peierls' barrier} are defined, respectively, as
\begin{align}
\phi_c(x,y)&=\inf_{t>0}h^c_t(x,y)+\alpha(c)t\,,\\
h_c(x,y)&=\liminf_{t\to\infty}h^c_t(x,y)+\alpha(c)t\,.
\end{align}
We call $\mathscr{A}_c=\big\{x\in\T^n~:~h_c(x,x)=0\big\}$ the {\em projected Aubry set}.
\end{defn}

It is well known that $\mathscr{A}_c$ is nonempty for any $c\in\R^n$. 

\begin{Pro}\label{h_c_determin_viscosity_solution}
{\em (\cite{Fathi-Siconolfi2004})} If  Peierls' barrier $h_c$ is finite then, for each
  $x\in\T^n$, $u_c(y):=h_c(x,y)$ is a global critical 
  solution of \eqref{HJE_weak_KAM}.
  Moreover, for any $x,y\in\T^n$, there is an arc $\xi:(-\infty,0]\to\T^n$, with
  $\xi(0)=y$, such that
  $$
  u_c(\xi(0))-u_c(\xi(-t))=\int^0_{-t}L_c(\xi(s),\dot{\xi}(s))\ dt+\alpha(c)t,\quad \forall\,t\geqslant0\,.
  $$
\end{Pro}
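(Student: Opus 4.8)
The plan is to prove, in order: that $u_c(y):=h_c(x,y)$ is a viscosity subsolution of \eqref{HJE_weak_KAM}; that through any prescribed $y\in\T^n$ there passes a backward $(u_c,L_c,\alpha(c))$-calibrated curve ending at $y$, which is exactly the ``Moreover'' assertion; and finally that the existence of such curves everywhere upgrades $u_c$ to a viscosity, hence critical, solution. For the first point I would use the subadditivity $h^c_{t+s}(x,z)\le h^c_t(x,w)+h^c_s(w,z)$ of the minimal-action kernels: for an absolutely continuous $\gamma:[a,b]\to\T^n$ it gives $h^c_{t+(b-a)}(x,\gamma(b))\le h^c_t(x,\gamma(a))+\int_a^bL_c(\gamma,\dot\gamma)\,ds$, and adding $\alpha(c)(t+b-a)$ to both sides and letting $t\to\infty$ yields $u_c\prec L_c+\alpha(c)$. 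Since $h_c$ is assumed finite, Ma\~n\'e's critical potential $\phi_c$ (Definition~\ref{de:peierls}) is finite and continuous, hence bounded, on the compact set $\T^n\times\T^n$, and the two-sided form of the estimates above makes $u_c$ Lipschitz; the proposition above relating dominated functions and viscosity subsolutions then gives that $u_c$ is a viscosity subsolution.

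To build the calibrated curve through a given $y$, choose $t_n\uparrow\infty$ with $h^c_{t_n}(x,y)+\alpha(c)t_n\to h_c(x,y)$, and by the classical existence theorem for action minimizers pick $\gamma_n\in C_{x,y}(t_n)$ attaining $h^c_{t_n}(x,y)$; these are $C^2$ extremals whose restrictions are again minimizers, so the action of $\gamma_n$ splits additively over subintervals. Set $\xi_n(s):=\gamma_n(t_n+s)$ on $[-t_n,0]$, so that $\xi_n(0)=y$. Splitting $h^c_{t_n}(x,y)+\alpha(c)t_n$ into the actions on $[0,t_n-T]$ and on $[t_n-T,t_n]$, and bounding the former below by $\phi_c(x,\xi_n(-T))\ge\min_{\T^n\times\T^n}\phi_c$, one sees that the $(L_c+\alpha(c))$-action of $\xi_n$ on $[-T,0]$ is bounded uniformly in $n$; by the standard a priori compactness of Tonelli minimizers the family $\{\xi_n|_{[-T,0]}\}$ is relatively compact in $C^1$, and a diagonal extraction over $T\in\N$ produces a subsequence converging in $C^1_{\mathrm{loc}}((-\infty,0])$ to an extremal $\xi$ with $\xi(0)=y$.

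It remains to check that $\xi$ is calibrated, i.e. $u_c(\xi(0))-u_c(\xi(-t))=\int_{-t}^0L_c(\xi,\dot\xi)\,ds+\alpha(c)t$ for all $t\ge0$. The inequality ``$\le$'' is immediate from $u_c\prec L_c+\alpha(c)$. For ``$\ge$'', use the additive splitting $h^c_{t_n}(x,y)+\alpha(c)t_n=\big(h^c_{t_n-t}(x,\xi_n(-t))+\alpha(c)(t_n-t)\big)+\big(\int_{-t}^0L_c(\xi_n,\dot\xi_n)\,ds+\alpha(c)t\big)$: along the subsequence the left side tends to $h_c(x,y)$ and, by $C^1_{\mathrm{loc}}$ convergence, the last parenthesis tends to $\int_{-t}^0L_c(\xi,\dot\xi)\,ds+\alpha(c)t$, hence the middle parenthesis tends to $\ell:=h_c(x,y)-\int_{-t}^0L_c(\xi,\dot\xi)\,ds-\alpha(c)t$. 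Now compare $h^c_{t_n-t+\delta}(x,\xi(-t))$ with $h^c_{t_n-t}(x,\xi_n(-t))$ by appending a short arc from $\xi_n(-t)$ to $\xi(-t)$ over time $\delta$: since $\xi_n(-t)\to\xi(-t)$ and $h^c_\delta$ is endpoint-continuous, this gives $h^c_{t_n-t+\delta}(x,\xi(-t))+\alpha(c)(t_n-t+\delta)\le\big(h^c_{t_n-t}(x,\xi_n(-t))+\alpha(c)(t_n-t)\big)+\varepsilon_n(\delta)$ with $\limsup_n\varepsilon_n(\delta)\le\delta\,(L(\xi(-t),0)+\alpha(c))$. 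Since $\liminf_{n\to\infty}\big(h^c_{t_n-t+\delta}(x,\xi(-t))+\alpha(c)(t_n-t+\delta)\big)\ge h_c(x,\xi(-t))$, letting $n\to\infty$ and then $\delta\downarrow0$ gives $h_c(x,\xi(-t))\le\ell$; together with ``$\le$'' this is the calibration identity.

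Finally, once a backward calibrated curve ends at every point, the standard argument makes $u_c$ a supersolution: differentiating the calibration identity along $\xi$ and combining the Fenchel inequality $L_c(z,q)+H(z,c+p)\ge\langle p,q\rangle$ with the equality realized along the extremal yields $H(y,c+p)\ge\alpha(c)$ for every $p\in D^-u_c(y)$; together with the subsolution property, $u_c$ is a viscosity, hence critical, solution. I expect the main obstacle to be the ``$\ge$'' half of the calibration identity: because $\liminf$ is not additive one cannot pass to the limit in the three-term splitting term by term, and the point is that the ``left'' piece $h^c_{t_n-t}(x,\xi_n(-t))+\alpha(c)(t_n-t)$ is forced to converge, being a difference of two convergent quantities, after which a short continuity/perturbation argument identifies its limit as an upper bound for $h_c(x,\xi(-t))$.
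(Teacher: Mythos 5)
The paper states this proposition with a citation to Fathi--Siconolfi and gives no proof of its own, so there is no in-text argument to compare against. Your proof is correct and is the standard weak KAM argument: domination $u_c\prec L_c+\alpha(c)$ via subadditivity of the kernels $h^c_t$, a priori compactness of Tonelli minimizers plus a diagonal extraction to build the backward ray, and the calibration identity obtained by splitting the action and controlling the $\liminf$ defining Peierls' barrier by appending a short connecting arc. The one delicate point you flag yourself---that the left piece $h^c_{t_n-t}(x,\xi_n(-t))+\alpha(c)(t_n-t)$ must genuinely converge rather than just be bounded---is handled correctly, since it equals the difference of two convergent quantities in the three-term splitting.
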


\begin{Pro}
{\em (\cite{Fathi-Siconolfi2004})} For each
  $x\in\T^n$, $u_c(y):=\phi_c(x,y)$ is a global critical 
  subsolution of \eqref{HJE_weak_KAM}. Moreover, $u_c$ defined as above is a global critical 
  solution  if and only if $x\in\mathscr{A}_c$.
  Furthermore, for any $y\in\T^n\setminus\{x\}$, there is an arc $\xi:(-\varepsilon,0]$, with
  $\xi(0)=y$, such that
  $$
  u_c(\xi(0))-u_c(\xi(-t))=\int^0_{-t}L_c(\xi(s),\dot{\xi}(s))\ dt+\alpha(c)t,\quad \forall\,t\in[0,\varepsilon]\,.
  $$
\end{Pro}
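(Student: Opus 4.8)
The plan is to establish the three assertions in order, using the variational properties of Mañé's critical potential together with the Proposition above relating domination, viscosity subsolutions, and calibrated arcs. Write $E=\alpha(c)$ and $L_c(x,q)=L(x,q)-\langle c,q\rangle$, and note first that $\phi_c(x,\cdot)$ is finite (since $E$ is precisely Mañé's critical value for $L_c$) and that $\phi_c(x,x)=0$ for every $x$: the bound $\le 0$ comes from constant loops of vanishingly small duration (here \eqref{energy_condition} is used), while $\ge 0$ follows because any critical subsolution is dominated by $L_c+E$; in particular $h_c(x,x)\ge\phi_c(x,x)=0$ for all $x$. To prove that $u_c:=\phi_c(x,\cdot)$ is a global critical subsolution, I would record the triangle inequality $\phi_c(x,z)\le\phi_c(x,y)+\phi_c(y,z)$ (concatenate near-optimal arcs) and the inequality $\phi_c(\gamma(a),\gamma(b))\le\int_a^bL_c(\gamma,\dot\gamma)\,ds+E(b-a)$, valid for every absolutely continuous arc $\gamma\colon[a,b]\to\T^n$ (translate $\gamma$ to $[0,b-a]$ and use the definitions of $h^c_{b-a}$ and $\phi_c$ as infima). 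These combine to give $u_c(\gamma(b))-u_c(\gamma(a))\le\int_a^bL_c(\gamma,\dot\gamma)\,ds+E(b-a)$, so $u_c\prec L_c+E$ and $u_c$ is a global critical subsolution by the cited Proposition.

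Next I would construct the local calibrated arc and, along the way, observe that $u_c$ is already a viscosity solution on $\T^n\setminus\{x\}$. Fix $y\ne x$ and pick $t_k>0$ and Tonelli minimizers $\gamma_k\colon[0,t_k]\to\T^n$ joining $x$ to $y$ with $\int_0^{t_k}L_c(\gamma_k,\dot\gamma_k)\,ds+Et_k\to\phi_c(x,y)$. Superlinearity forces $t_k\ge\delta>0$; fix $0<\varepsilon<\delta$. Splitting the action at time $t_k-\varepsilon$ and bounding the initial and final pieces below by $\min\phi_c(x,\cdot)$ and $\min\phi_c(\cdot,y)$ shows the tails $\gamma_k|_{[t_k-\varepsilon,t_k]}$ have bounded action, hence uniformly bounded velocities; after passing to a subsequence and reparametrising on $[-\varepsilon,0]$ they converge uniformly to a curve $\xi$ with $\xi(0)=y$, which by lower semicontinuity of the action is a Tonelli minimizer, hence $C^2$. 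Passing to the limit in
\[
\phi_c\big(x,\gamma_k(t_k-t)\big)+\int_{t_k-t}^{t_k}L_c(\gamma_k,\dot\gamma_k)\,ds+Et\ \le\ \int_0^{t_k}L_c(\gamma_k,\dot\gamma_k)\,ds+Et_k
\]
(using continuity of $\phi_c(x,\cdot)$ and lower semicontinuity of the action) yields $u_c(\xi(0))-u_c(\xi(-t))\ge\int_{-t}^0L_c(\xi,\dot\xi)\,ds+Et$ for $t\in[0,\varepsilon]$, while domination gives the opposite inequality; thus $\xi$ is $(u_c,L_c,E)$-calibrated on $[-\varepsilon,0]$, which is the final assertion. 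Moreover, at each $y\ne x$ where $u_c$ is differentiable, differentiating the calibration identity along the $C^2$ arc $\xi$ gives $\langle Du_c(y),\dot\xi(0)\rangle-L_c(y,\dot\xi(0))=E$, hence $H(y,c+Du_c(y))\ge E$; since a semiconcave function has nonempty subdifferential only at differentiability points, $u_c$ solves \eqref{HJE_weak_KAM} on $\T^n\setminus\{x\}$.

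It then remains to decide when the supersolution inequality holds at $x$ itself; if $u_c$ is not differentiable at $x$ this is automatic (empty subdifferential), so it suffices to produce, under the hypothesis $x\in\mathscr{A}_c$, a $C^2$ arc calibrated on some $[-\varepsilon,0]$ and ending at $x$. Since $h_c(x,x)=0$, take $t_k\to\infty$ and loops $\sigma_k\colon[0,t_k]\to\T^n$ with $\sigma_k(0)=\sigma_k(t_k)=x$ and action tending to $\phi_c(x,x)=0$; the compactness argument above, applied to the tails $\sigma_k|_{[t_k-\varepsilon,t_k]}$, produces such an arc, proving the ``if'' direction. For the converse, assume $u_c$ is a global critical solution; then the Proposition above furnishes a calibrated arc $\xi\colon(-\infty,0]\to\T^n$ with $\xi(0)=x$. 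For each large $t$, concatenating an arc from $x$ to $\xi(-t)$ of action at most $\phi_c(x,\xi(-t))+\tfrac1t$ with $\xi|_{[-t,0]}$ gives a loop at $x$ of duration tending to infinity and action at most $\phi_c(x,\xi(-t))+\int_{-t}^0L_c(\xi,\dot\xi)\,ds+Et+\tfrac1t=\tfrac1t$, the last equality by calibration and $\phi_c(x,x)=0$. Letting $t\to\infty$ gives $h_c(x,x)\le 0$, hence $h_c(x,x)=0$ and $x\in\mathscr{A}_c$.

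The main obstacle will be the compactness step recurring in the last two parts: extracting a uniform Lipschitz bound on the tails of near-minimizers from the mere boundedness of their action — this rests on the superlinearity of $L_c$ and the a priori regularity of Tonelli extremals — and then passing to the limit through the lower semicontinuity of the action while preserving the calibration identity. Everything else (the triangle inequality, $\phi_c(x,x)=0$, the energy computation along calibrated $C^2$ curves, and the loop construction in the converse) is routine bookkeeping.
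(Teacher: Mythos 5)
The paper states this proposition purely as a citation to \cite{Fathi-Siconolfi2004} and does not supply a proof, so there is no in-paper argument to compare your attempt against; I can only assess it on its own terms. Your reconstruction is correct in its main lines and follows the standard weak KAM route: domination of $\phi_c(x,\cdot)$ from the triangle inequality together with $\phi_c(x,x)=0$; extraction of a local calibrated arc at any $y\neq x$ from the a priori compactness of Tonelli-minimizer tails and lower semicontinuity of the action; the ``if'' direction from $h_c(x,x)=0$ by running the same tail extraction on long near-minimizing loops; and the ``only if'' direction by closing a backward calibrated ray into loops of vanishing reduced action. One sentence should be removed: the claim that ``if $u_c$ is not differentiable at $x$ the supersolution inequality is automatic (empty subdifferential).'' Emptiness of $D^-$ at a nondifferentiability point is a consequence of semiconcavity, and you have not shown (nor could you, in general) that $\phi_c(x,\cdot)$ is semiconcave \emph{at} $x$ when $x\notin\mathscr{A}_c$ --- indeed the whole point of the equivalence is that the solution property may fail exactly there. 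Fortunately the claim is superfluous: once you have a $(u_c,L_c,E)$-calibrated arc $\xi$ with $\xi(0)=x$ (which you do construct under $x\in\mathscr{A}_c$), then for any $p\in D^-u_c(x)$ the subdifferential inequality $u_c(\xi(-t))\geqslant u_c(x)+\langle p,\xi(-t)-x\rangle+o(t)$ combined with the calibration identity gives, after dividing by $t$ and letting $t\to 0^+$, that $H(x,c+p)\geqslant\langle c+p,\dot\xi(0)\rangle-L(x,\dot\xi(0))\geqslant E$, with no appeal to semiconcavity. The same computation at $y\neq x$ also renders your detour through ``a semiconcave function has nonempty subdifferential only at differentiability points'' unnecessary for the supersolution property on $\T^n\setminus\{x\}$. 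With that tightening, the argument is sound.
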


\begin{Pro}\label{subsolution_admit_solution}
{\em (\cite{Fathi-book})} Let $w$ be a critical  subsolution of \eqref{HJE_weak_KAM}. Then there exists be a critical  solution $u$ such that $u_{|\mathscr{A}_c}=w_{|\mathscr{A}_c}$.
\end{Pro}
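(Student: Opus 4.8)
The plan is to realize $u$ as the long-time limit of the negative Lax--Oleinik semigroup applied to $w$, and then to identify it with $w$ on $\mathscr A_c$ using the characterization of the projected Aubry set through Peierls' barrier. For $t>0$ and $f\in C(\T^n)$ define the (critically normalized) Lax--Oleinik operator
\[
\mathcal T^-_t f(x)=\inf\Big\{f(\gamma(0))+\int_0^tL_c(\gamma(s),\dot\gamma(s))\,ds+\alpha(c)\,t~:~\gamma:[0,t]\to\T^n\ \text{a.c.},\ \gamma(t)=x\Big\}.
\]
First I would record the standard structural properties: $\mathcal T^-_t$ is order preserving, it satisfies the semigroup law $\mathcal T^-_{t+s}=\mathcal T^-_t\circ\mathcal T^-_s$, it is nonexpansive for the sup norm, and, by the superlinearity of $L_c$, the family $\{\mathcal T^-_t w:t\ge 1\}$ is equi-Lipschitz and equibounded on $\T^n$. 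The key elementary observation is that, for a continuous $w$, one has $w\prec L_c+\alpha(c)$ if and only if $w\le\mathcal T^-_t w$ for every $t\ge0$; together with order preservation this yields $\mathcal T^-_s w\le\mathcal T^-_{t+s}w$, i.e. $t\mapsto\mathcal T^-_t w(x)$ is nondecreasing for each fixed $x$.

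Next I would pass to the limit. Being nondecreasing and bounded, $u(x):=\lim_{t\to\infty}\mathcal T^-_t w(x)$ is well defined; the equi-Lipschitz bound (via Dini's theorem, or Arzel\`a--Ascoli) makes the convergence uniform and $u$ Lipschitz, so nonexpansiveness of $\mathcal T^-_s$ together with the semigroup law gives $\mathcal T^-_s u=\lim_{t\to\infty}\mathcal T^-_{s+t}w=u$ for every $s\ge0$. Thus $u$ is a fixed point of the whole semigroup, hence a weak KAM solution of negative type, and therefore a viscosity (critical) solution of \eqref{HJE_weak_KAM} (see \cite{Fathi-book}). Moreover, $w\le\mathcal T^-_t w$ for all $t\ge0$ forces $w\le u$ on $\T^n$.

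It remains to show $u=w$ on $\mathscr A_c$. Fix $x\in\mathscr A_c$, so that $h_c(x,x)=\liminf_{t\to\infty}\big(h^c_t(x,x)+\alpha(c)t\big)=0$; choose $t_k\to\infty$ with $h^c_{t_k}(x,x)+\alpha(c)t_k\to0$ and, for each $k$, a curve $\gamma_k\in C_{x,x}(t_k)$ with $\int_0^{t_k}L_c(\gamma_k,\dot\gamma_k)\,ds\le h^c_{t_k}(x,x)+\tfrac1k$. Since $\gamma_k(0)=\gamma_k(t_k)=x$, testing the infimum defining $\mathcal T^-_{t_k}w(x)$ against $\gamma_k$ gives $\mathcal T^-_{t_k}w(x)\le w(x)+h^c_{t_k}(x,x)+\alpha(c)t_k+\tfrac1k$, whence $\limsup_k\mathcal T^-_{t_k}w(x)\le w(x)$; on the other hand $\mathcal T^-_{t_k}w(x)\ge w(x)$ by domination. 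Therefore $\mathcal T^-_{t_k}w(x)\to w(x)$, and since $t\mapsto\mathcal T^-_t w(x)$ is monotone with limit $u(x)$ while $t_k\to\infty$, we conclude $u(x)=w(x)$.

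I expect the only genuinely delicate point to be the a priori regularity invoked when passing to the limit---namely that $\{\mathcal T^-_t w:t\ge1\}$ is equi-Lipschitz and equibounded, so that the monotone pointwise limit is again Lipschitz and a true fixed point of the semigroup; the equivalence $w\prec L_c+\alpha(c)\Leftrightarrow w\le\mathcal T^-_t w$, the monotonicity, and the short computation on $\mathscr A_c$ are all elementary once the Lax--Oleinik machinery is in place. As an alternative avoiding the semigroup, one may set $u(y)=\inf_{x\in\mathscr A_c}\{w(x)+h_c(x,y)\}$: each $h_c(x,\cdot)$ is a critical solution by Proposition~\ref{h_c_determin_viscosity_solution}, domination gives $w(x)-w(x')\le h_c(x',x)$ for $x,x'\in\mathscr A_c$, and the triangle-type inequalities for $h_c$ make this infimum again a critical solution; the identity $u|_{\mathscr A_c}=w|_{\mathscr A_c}$ would then follow from $h_c(x,x)=0$ exactly as above.
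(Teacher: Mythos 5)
The paper gives no proof of this proposition; it is simply cited from Fathi's book, where the argument is exactly the one you present: iterate the critically normalized negative Lax--Oleinik semigroup on a dominated function, use $w\le\mathcal T^-_t w$ together with monotonicity and the uniform Lipschitz/boundedness estimates to extract a fixed point $u$, and then test the Aubry-set characterization $h_c(x,x)=0$ along almost-closed loops to show $\mathcal T^-_{t_k}w(x)\to w(x)$ and hence $u(x)=w(x)$ for $x\in\mathscr A_c$. Your proof is correct and matches the referenced argument; the alternative you sketch, $u(y)=\inf_{x\in\mathscr A_c}\{w(x)+h_c(x,y)\}$, is the other standard route and also works, using that this infimum of critical solutions (Proposition~\ref{h_c_determin_viscosity_solution}) is again a critical solution for convex $H$ and that domination forces $w(y)-w(x)\le h_c(x,y)$.
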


Since $H$ is convex in the fibers, one has that the set of all critical subsolutions is  convex , that is, for any  pair of critical  subsolution $v_0,v_1$ and every $\lambda\in (0,1)$, $v_{\lambda}=\lambda v_0+(1-\lambda)v_1$ is also a critical  subsolution. So, if \eqref{HJE_weak_KAM} admits two distinct critical  solutions, then each $v_{\lambda}$ admits a critical  solution $u_{\lambda}$, and \eqref{HJE_weak_KAM} admits infinitely many critical  solutions by Proposition \ref{subsolution_admit_solution}.

\subsection{Viscosity solutions and their semiconcavity}
Now, we recall some properties related to the semiconcavity of  viscosity solutions. The following result is fundamental (see, e.g., \cite{Fathi-book} \cite{Rifford}).

\begin{Pro}
Any viscosity solution of the Hamilton-Jacobi equation \eqref{HJE_weak_KAM} is locally semiconcave with linear modulus.
\end{Pro}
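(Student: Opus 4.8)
The statement to prove is the last \begin{Pro} in the excerpt: "Any viscosity solution of the Hamilton-Jacobi equation \eqref{HJE_weak_KAM} is locally semiconcave with linear modulus." The plan is to exploit the variational (Lax–Oleinik type) representation of viscosity solutions together with the strict convexity of the Tonelli Lagrangian in the velocity variable. By the results recalled just above in the appendix, a viscosity solution $u_c$ of \eqref{HJE_weak_KAM} is dominated by $L_c+E$, and through each point $x$ there is a backward $(u_c,L_c,E)$-calibrated curve $\gamma_x:(-\infty,0]\to\T^n$ with $\gamma_x(0)=x$. The key structural fact to extract is a local representation: for $x$ in a small ball and for a fixed time $t>0$, $u_c(x) = u_c(\gamma_x(-t)) + \int_{-t}^{0} L_c(\gamma_x,\dot\gamma_x)\,ds + Et$, while for any nearby point $y$ one has the one-sided inequality $u_c(y) \le u_c(\gamma_x(-t)) + A_t(\gamma_x(-t),y) + Et$, where $A_t(z,y)$ denotes the minimal action over curves from $z$ to $y$ in time $t$. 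This turns the problem of semiconcavity of $u_c$ into semiconcavity of the fundamental solution $y\mapsto A_t(z,y)$ of the associated variational problem, uniformly for $z$ ranging in a neighborhood.

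First I would fix a point $x_0$ and work in a coordinate ball $B$ around it, lifting to $\R^n$ so that everything is a genuine function on an open convex set. Choose $t>0$ small; then the minimizing curve $\gamma_x$ from $z:=\gamma_{x_0}(-t)$ (or a suitable point at distance of order $t$) to $x$ stays in a slightly larger ball, the velocities $\dot\gamma_x$ are bounded (by the superlinearity/compactness a priori estimates for Tonelli minimizers), and on this compact region $L$ is $C^2$ with a uniform lower bound on $\partial^2 L/\partial q^2$ and a uniform upper bound on all second derivatives. Next, to verify the criterion of Proposition~\ref{criterion-Du_semiconcave}, I would fix $x\in B$, take the calibrated minimizer $\gamma_x$ realizing $u_c(x)$ backward to time $-t$, and for $y$ near $x$ construct the competitor curve obtained by adding to $\gamma_x$ the linear (in the lifted coordinates) perturbation that steers the endpoint from $x$ to $y$, i.e. $\gamma_y(s) = \gamma_x(s) + \tfrac{s+t}{t}(y-x)$ for $s\in[-t,0]$. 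Using domination for $u_c$ and calibration of $\gamma_x$, one gets
\begin{equation*}
u_c(y) - u_c(x) \le \int_{-t}^{0}\big[L_c(\gamma_y,\dot\gamma_y) - L_c(\gamma_x,\dot\gamma_x)\big]\,ds.
\end{equation*}
A second-order Taylor expansion of $L$ in both arguments along this perturbation, together with the uniform $C^2$ bounds, yields $u_c(y)-u_c(x) \le \langle p_x, y-x\rangle + C|y-x|^2$ with $p_x$ the first-order term (a linear functional of $y-x$) and $C$ a constant depending only on $t$ and the $C^2$ data of $L$ on the compact region — crucially independent of $x\in B$. By Proposition~\ref{criterion-Du_semiconcave}, this shows $u_c$ is semiconcave with linear modulus on $B$, hence locally semiconcave on $\R^n$, hence on $\T^n$.

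The main obstacle — and the place where care is genuinely needed rather than routine — is obtaining the a priori bounds on the minimizing/calibrated curves that make the "compact region with uniform $C^2$ control" argument legitimate: one must know that the backward calibrated curves $\gamma_x$ for $x$ near $x_0$ have uniformly bounded velocities on $[-t,0]$ and remain in a fixed compact set, which relies on the standard but nontrivial Tonelli a priori compactness estimates (superlinearity of $L$ forces minimizers of bounded action and bounded time-length to have bounded speed). A secondary technical point is that for the chosen small $t$ the perturbed curve $\gamma_y$ is an admissible competitor (it stays in the domain where our $C^2$ bounds hold) for all $y$ sufficiently close to $x$, uniformly in $x$; shrinking $B$ and $t$ handles this. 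Everything else — the Taylor expansion, the quadratic remainder estimate, and the appeal to Proposition~\ref{criterion-Du_semiconcave} — is a routine computation once those estimates are in place.
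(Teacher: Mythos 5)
The paper does not actually prove this proposition: it is stated as a ``fundamental'' fact with a citation to Fathi's book and to Rifford's paper, so there is no proof in the source to compare against line by line. Your argument is the standard one found in those references (and in Chapters 5--6 of Cannarsa--Sinestrari for Cauchy problems): exploit the calibrated-curve representation of a weak KAM solution, compare against the linearly perturbed competitor $\gamma_y(s)=\gamma_x(s)+\tfrac{s+t}{t}(y-x)$, expand $L$ to second order on a compact region where Tonelli a priori estimates give uniform $C^2$ control, and invoke Proposition~\ref{criterion-Du_semiconcave}. The chain of inequalities you write down is correct: domination applied to the competitor from $\gamma_x(-t)$ to $y$, minus calibration of $\gamma_x$ on $[-t,0]$, gives exactly
\begin{equation*}
u_c(y)-u_c(x)\leqslant\int_{-t}^0\bigl[L_c(\gamma_y,\dot\gamma_y)-L_c(\gamma_x,\dot\gamma_x)\bigr]\,ds,
\end{equation*}
and the Taylor remainder is controlled by $C|y-x|^2$ with $C\sim 1/t$ once the speeds of the calibrated curves are uniformly bounded. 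You correctly flag the two places where care is needed (uniform velocity bounds on backward calibrated curves via Tonelli compactness, and admissibility of the perturbed competitor after shrinking the ball relative to $t$). One small point worth making explicit: Proposition~\ref{criterion-Du_semiconcave} asks for the one-sided estimate to hold for all $y$ in a convex open set $\Omega$, so you should fix $t$ first and then take $B$ small compared to $t$ (rather than shrinking both simultaneously), so that the same constant $C$ works for every pair $x,y\in B$ and the proposition applies on $B$; this yields semiconcavity on $B$ and hence local semiconcavity. With that clarification, the proposal is a correct and complete proof of the statement.
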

The following is the weak KAM analogue of \cite[Theorem~6.4.12]{Cannarsa-Sinestrari}
\begin{Pro}\label{Ext_and_reachable}
$\mathrm{Ext}\,D^+u(x)=D^{\ast}u(x)$ for any viscosity solution $u$ of \eqref{HJE_weak_KAM} and any $x\in\T^n$.
\end{Pro}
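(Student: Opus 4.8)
The plan is to establish the two inclusions $\mathrm{Ext}\,D^+u(x)\subseteq D^{\ast}u(x)$ and $D^{\ast}u(x)\subseteq\mathrm{Ext}\,D^+u(x)$ separately, working in a coordinate chart about $x$; the first is pure convex geometry, while the second exploits the strict convexity of $H$ in the momentum variable. For the first inclusion I would note that $D^{\ast}u(x)$ is compact — it is bounded, being contained in $D^+u(x)$, and closed by a routine diagonal argument on the defining sequences — and that $D^+u(x)=\mathrm{co}\,D^{\ast}u(x)$ by Proposition~\ref{basic_facts_of_superdifferential}(d). Since an extreme point of the convex hull of a compact subset of $\R^n$ necessarily belongs to that set (an immediate consequence of Carath\'eodory's theorem), $\mathrm{Ext}\,D^+u(x)\subseteq D^{\ast}u(x)$ follows at once.

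For the reverse inclusion the crucial point is that $D^{\ast}u(x)$ lies on the energy level, i.e. $H(x,c+p)=\alpha(c)$ for every $p\in D^{\ast}u(x)$. To see this, given such a $p$ I would choose $x_k\to x$ with $x_k\neq x$, $u$ differentiable at $x_k$, and $Du(x_k)\to p$ — such points are plentiful since a viscosity solution is locally Lipschitz and hence differentiable almost everywhere — and use that $u$, being simultaneously a viscosity sub- and a supersolution, satisfies $H(x_k,c+Du(x_k))=\alpha(c)$ at each $x_k$; letting $k\to\infty$ and using the continuity of $H$ yields $H(x,c+p)=\alpha(c)$. On the other hand, \eqref{viscosity subsolution} gives $H(x,c+q)\leqslant\alpha(c)$ for every $q\in D^+u(x)$. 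Now, arguing by contradiction, suppose some $p\in D^{\ast}u(x)$ is not an extreme point of $D^+u(x)$; then $p=\lambda q_1+(1-\lambda)q_2$ for some $\lambda\in(0,1)$ and $q_1\neq q_2$ in $D^+u(x)$, and strict convexity of $q\mapsto H(x,c+q)$ (Tonelli condition (H2)) forces
\begin{equation*}
\alpha(c)=H(x,c+p)<\lambda H(x,c+q_1)+(1-\lambda)H(x,c+q_2)\leqslant\alpha(c),
\end{equation*}
a contradiction. Hence $D^{\ast}u(x)\subseteq\mathrm{Ext}\,D^+u(x)$, and together with the first inclusion this gives equality. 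Alternatively, one may quote \cite[Theorem~6.4.12]{Cannarsa-Sinestrari} directly, the sole adaptation being to feed in the level-set inclusion $D^{\ast}u(x)\subseteq\{q:H(x,c+q)=\alpha(c)\}$ obtained above, together with the energy condition \eqref{energy_condition}, which guarantees this level set is a genuine hypersurface.

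The one step that is not formal convex geometry — and hence the point to be careful about — is the claim that reachable gradients satisfy the Hamilton--Jacobi equation with \emph{equality}, not merely the subsolution inequality; this is precisely where both halves of the viscosity solution property are used, since the subsolution property alone would only confine $D^{\ast}u(x)$ to the sublevel set $\{q:H(x,c+q)\leqslant\alpha(c)\}$ and would not prevent a reachable gradient from sitting in its interior. Once that is secured, strict convexity of $H(x,\cdot)$ does all the remaining work, by the same mechanism through which the strict convexity of the ellipsoids $E_{\rho}(x)$ was exploited in the proofs of Theorems~\ref{equiv} and~\ref{th:wKAM}.
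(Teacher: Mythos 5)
Your proposal is correct and follows essentially the same route as the paper's own proof: the inclusion $\mathrm{Ext}\,D^+u(x)\subseteq D^{\ast}u(x)$ from Proposition~\ref{basic_facts_of_superdifferential}(d), and the reverse inclusion by first showing that reachable gradients lie on the energy level $H(x,c+p)=\alpha(c)$ and then invoking strict convexity of $H(x,\cdot)$ to rule out $p$ being a proper convex combination of points of $D^+u(x)$. The only differences are cosmetic: you spell out the Milman-type extreme-point fact and the strict-convexity inequality chain that the paper leaves implicit.
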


\begin{proof}
The inclusion $\mathrm{Ext}\,D^+u(x)\subset D^{\ast}u(x)$ is a direct consequence of Proposition~\ref{basic_facts_of_superdifferential}~(d). For the opposite inclusion, fix $x\in\T^n$ and let $p$ be a reachable gradient vector of $u$ at $x$. Then there exists a sequence $\{x_k\}$ such that $u$ is differentiable at each $x_k$, $H\big(x_k,c+Du(x_k)\big)=E$ and $p=\lim_{k\to\infty}Du(x_k)$. Therefore,
\begin{align*}\label{reachable_gradient}
H(x,c+p)=E,\quad\forall p\in D^{\ast}u(x)\,.
\end{align*}
Then the strict convexity of $H$ in the fibers implies that $p$ is no convex combination of other points of $D^+u(x)$.  Thus, $D^{\ast}u(x)\subset \mathrm{Ext}\,D^+u(x)$.
\end{proof}

\begin{Rem}
Note that the equality $\mathrm{Ext}\,D^+u(x)=D^{\ast}u(x)$ is false for a general semiconcave function $u$,  see e.g. \cite[Example~3.3.13]{Cannarsa-Sinestrari}. 
\end{Rem}

 We now turn to discuss  some connections between the dynamics of Hamiltonian flows on an energy hypersurface with $E$ not less than Ma\~n\'e's critical value $c_0$, and the structure of the superdifferential of the viscosity solutions of \eqref{HJE_weak_KAM}. The main part of the following result is due to Rifford \cite{Rifford}, see Lemma 6 and Lemma 7 therein.

\begin{Pro}\label{reachable_grad_and_backward}
Let $x\in \T^n$ and $u:\T^n\to\R$ be a viscosity solution of the Hamilton-Jacobi equation
$$
H(x,c+Du(x))=E=\alpha(c),\quad x\in \T^n,
$$
where $E\geqslant c_0$. Then $p\in D^{\ast}u(x)$ if and only if there exists a unique $C^1$ curve $\gamma:(-\infty,0]\to \T^n$ with $\gamma(0)=x$ which is $(u,L,E)$-calibrated, and $p=\frac{\partial L}{\partial q}(x,\dot{\gamma}(0))$.
\end{Pro}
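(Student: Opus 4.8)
The plan is to identify $D^{\ast}u(x)$ with the set of initial momenta of backward calibrated curves through $x$, using the standard dictionary of weak KAM theory between calibrated curves and backward minimizers of the Tonelli action. Two classical facts (see \cite{Fathi-book}) will be used repeatedly. \textbf{Fact (i):} an $(u,L_c,E)$-calibrated arc is a minimizer of the $L_c$-action between any two of its points, hence, by Tonelli's regularity theorem, a $C^2$ extremal of $L_c$, and so is uniquely determined by the pair $(\gamma(0),\dot\gamma(0))$. \textbf{Fact (ii):} if $\gamma:(-\infty,0]\to\T^n$ is $(u,L_c,E)$-calibrated with $\gamma(0)=x$, then $u$ is differentiable at $\gamma(t)$ for every $t<0$ with $c+Du(\gamma(t))=\frac{\partial L}{\partial q}(\gamma(t),\dot\gamma(t))$, and if in addition $u$ happens to be differentiable at $x$ then the same relation holds at $t=0$, i.e. $c+Du(x)=\frac{\partial L}{\partial q}(x,\dot\gamma(0))$. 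Fact (ii) is where the Hamilton--Jacobi (viscosity, not merely dominated) nature of $u$ enters: the endpoint relation is exactly the transversality condition for the free-endpoint problem $\eta\mapsto\int_{t}^{0}L_c(\eta,\dot\eta)\,d\sigma-u(\eta(0))$ that $\gamma|_{[t,0]}$ solves because $u\prec L_c+E$.

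\emph{Sufficiency.} Let $\gamma:(-\infty,0]\to\T^n$ be a $C^1$ $(u,L_c,E)$-calibrated arc with $\gamma(0)=x$, and let $p$ be defined by $c+p=\frac{\partial L}{\partial q}(x,\dot\gamma(0))$. Energy conservation along the extremal, $H(\gamma(t),c+Du(\gamma(t)))=E$ for $t<0$, together with the energy condition, keeps $|\dot\gamma(t)|$ bounded away from $0$, so $\gamma(t)\neq x$ for all $t<0$ close to $0$. Along such times we have $\gamma(t)\to x$ with $\gamma(t)\neq x$, $u$ differentiable at $\gamma(t)$ by Fact (ii), and $c+Du(\gamma(t))=\frac{\partial L}{\partial q}(\gamma(t),\dot\gamma(t))\to\frac{\partial L}{\partial q}(x,\dot\gamma(0))=c+p$; hence $p\in D^{\ast}u(x)$. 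Uniqueness of such a $\gamma$ is then immediate: any $C^1$ calibrated arc with this $p$ must have $\dot\gamma(0)=\frac{\partial H}{\partial p}(x,c+p)$ by Legendre duality, and two $C^2$ extremals agreeing at $t=0$ in position and velocity coincide on $(-\infty,0]$.

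\emph{Necessity.} Let $p\in D^{\ast}u(x)$ and pick $x_k\to x$, $x_k\neq x$, with $u$ differentiable at $x_k$ and $Du(x_k)\to p$. By the existence of backward calibrated curves for weak KAM subsolutions recalled earlier in this appendix, for each $k$ there is a $C^1$ $(u,L_c,E)$-calibrated arc $\gamma_k:(-\infty,0]\to\T^n$ with $\gamma_k(0)=x_k$; since $u$ is differentiable at $x_k$, Fact (ii) gives $\dot\gamma_k(0)=\frac{\partial H}{\partial p}(x_k,c+Du(x_k))$, so $(\gamma_k(0),\dot\gamma_k(0))\to(x,\frac{\partial H}{\partial p}(x,c+p))$. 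The $\gamma_k$ being extremals of locally uniformly bounded energy on the compact $\T^n$, continuous dependence on initial conditions yields a $C^1$ limit arc $\gamma:(-\infty,0]\to\T^n$ with $\gamma(0)=x$ and $(\gamma_k,\dot\gamma_k)\to(\gamma,\dot\gamma)$ uniformly on compact subsets. Passing to the limit in the calibration identity
\begin{equation*}
u(\gamma_k(0))-u(\gamma_k(-t))=\int_{-t}^{0}L_c(\gamma_k(s),\dot\gamma_k(s))\,ds+Et
\end{equation*}
(using continuity of $u$ and the uniform convergence) shows that $\gamma$ is $(u,L_c,E)$-calibrated, while the endpoint relation $c+Du(x_k)=\frac{\partial L}{\partial q}(x_k,\dot\gamma_k(0))$ passes to the limit to give $c+p=\frac{\partial L}{\partial q}(x,\dot\gamma(0))$. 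Uniqueness of $\gamma$ follows as in the sufficiency part.

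I expect the main obstacle to be Fact (ii): that $u$ is differentiable along a calibrated curve at all negative times and that the momentum there equals $\frac{\partial L}{\partial q}(\gamma(t),\dot\gamma(t))$. This is the step that genuinely exploits domination of $u$ from both sides along $\gamma$ (calibration giving the action-minimizing property, the saturated domination inequality giving the transversality relation), and it is what pins down the bijection between $D^{\ast}u(x)$ and the initial velocities of backward calibrated curves. A lesser, routine point is checking that the limiting arc in the necessity step is defined on all of $(-\infty,0]$ and is genuinely calibrated rather than merely dominated; this follows from the local uniform energy bound for the $\gamma_k$ and the uniform-convergence argument above.
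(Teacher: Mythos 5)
Your proposal is correct and follows essentially the same strategy as the paper: translate the definition of reachable gradient into a sequence of backward calibrated curves, pass to a limit arc (you invoke continuous dependence on initial conditions of the Euler--Lagrange flow where the paper uses equi-Lipschitz estimates plus Ascoli--Arzel\`a, which are interchangeable here), and conversely sample a given calibrated arc along $\gamma(t_k)$ with $t_k\to 0^-$. Your additional remark that the energy condition keeps $\dot\gamma(0)\neq 0$, hence $\gamma(t)\neq x$ for small $t<0$, explicitly addresses the requirement $x_k\neq x$ in the definition of $D^{\ast}u(x)$, a point the paper's proof leaves implicit.
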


\begin{proof}
Let $x\in \T^n$ and $p\in D^{\ast}u(x)$. Then there exists a sequence $\{x_k\}$, with $\lim_{k\to\infty}x_k=x$, such $u$ is differentiable at each $x_k$ with $p_k=Du(x_k)\to p$. It is well known from weak KAM theory that, for each $k$, there exists a unique $C^1$ arc $\gamma_k:(-\infty,0]\to \T^n$ which is $(u,L,E)$-calibrated, and $p_k=\frac{\partial L}{\partial q}(x,\dot{\gamma_k}(0))$. The sequence $\gamma_k$ is equi-Lipschitz, so, by the Ascoli-Arzela theorem, we can extract  a subsequence converging to a $C^1$ arc $\gamma$ which is $(u,L,E)$-calibrated, and $p=\frac{\partial L}{\partial q}(x,\dot{\gamma}(0))$. Such an arc is necessarily a solution of the Euler-Lagrange equations. Uniqueness follows from  classical results for ordinary differential equations.

Conversely, suppose there is a unique $C^1$ arc $\gamma:(-\infty,0]\to \T^n$ with $\gamma(0)=x$ which is $(u,L,E)$-calibrated. Then, taking any sequence $x_k=\gamma(t_k)$ such that $t_k<0$ and $x_k\to x$ as $k\to\infty$, one has that $u$ is differentiable at $x_k$ and 
\begin{equation*}
Du(x_k)=\frac{\partial L}{\partial q}(x_k,\dot{\gamma}(t_k))\to \frac{\partial L}{\partial q}(x,\dot{\gamma}(0))=p\,.
\end{equation*}
Thus, $p\in D^{\ast}u(x)$.
\end{proof}

\end{document}